\newcommand{\smallO}[1]{\ensuremath{\mathop{}\mathopen{}o\mathopen{}\left(#1\right)}}
\def \D{\Delta}
\def\R{\mathbb R}% tap so thuc
\def\le{\leq}% nho hon
\def\ge{\geq}% lon hon
\def\i0i{\int_0^\infty}
\numberwithin{equation}{section}
\newtheorem{thm}{Theorem}%[section]
\newtheorem{lem}{Lemma}
\newtheorem{definition}{Definition}
\newtheorem{rem}{Remark}
\let\ssection=\section\renewcommand{\section}{\setcounter{equation}{0}\ssection}
\newcommand{\abs}[1]{\left |#1\right |}
\def\angb<#1>{\langle #1 \rangle}%% angle bracket
\def\gs{\sigma}
\numberwithin{equation}{section}
\title[Asymptotically homogeneous solutions of the supercritical Lane-Emden system]{Asymptotically homogeneous solutions of the supercritical Lane-Emden system}
\author{Louis Dupaigne}
\address{Universite Claude Bernard Lyon 1, ICJ  UMR5208, CNRS, Ecole Centrale de Lyon, INSA Lyon, Universite Jean Monnet, 69622 Villeurbanne, France}
\email{louis.dupaigne@math.cnrs.fr}
\author{Marius Ghergu}
\address{School of Mathematics and Statistics, University College Dublin, Belfield Campus, Dublin 4, Ireland}
\address{Institute of Mathematics of the Romanian Academy, 21 Calea Grivitei St.,  010702 Bucharest, Romania}
\email{marius.ghergu@ucd.ie}
\author{Hatem Hajlaoui}
\address{ Higher Institute of Applied Mathematics and Computer Sciences of Kairouan, Tunisia}
\email{hajlaouihatem@gmail.com}
\begin{document}

\maketitle
\begin{abstract}
We consider the Lane-Emden system
\begin{align*}%\label{0.1}
-\Delta u =  \vert v\vert^{p-1}v, \quad-\Delta v= \vert u\vert^{q-1}u\qquad\mbox{in }\; \mathbb{R}^d.
\end{align*}
When $p\geq q\geq 1,$ it is known that there exists a positive radial stable solution $(u,v)\in C^2(\R^d)$ if and only if $d\ge11$ and $(p,q)$ lies on or above the so-called Joseph-Lundgren curve introduced in \cite{cdg}.
In this paper, we prove that for $d\leq 10,$ there is no positive stable solution (or merely stable outside a compact set and $(p,q)$ does not lie on the critical Sobolev hyperbola), while for $d\geq 11,$  the Joseph-Lundgren curve is indeed the dividing line for the existence of such solutions, if one assumes in addition that they are asymptotically homogeneous (see Definition \ref{def1} below). Most of our results are optimal improvements of previous works in the litterature.
\end{abstract}

\section{Introduction}
\setcounter{equation}{0}
The Lane-Emden system
\begin{align}\label{1.1}
-\Delta u =  \vert v\vert^{p-1}v, \quad-\Delta v= \vert u\vert^{q-1}u\qquad\mbox{in }\; \mathbb{R}^d,
\end{align}
where $d\ge2$, $p\ge q>0$, $pq>1$ and $u,v\in C^2(\R^d)$, has been studied for the past three decades. Yet, not so much is known about its solutions. Thanks to the works of Mitidieri \cite{mitidieri} and Serrin and Zou \cite{sz}, there exists a radial positive solution to the system if and only if the exponents are supercritical i.e.
\begin{equation}\label{supercritical}
\frac1{p+1}+\frac1{q+1}\le 1-\frac2d.
\end{equation}
The Lane-Emden conjecture states that, whether radial or not, no positive solution exists in the subcritical case i.e. when \eqref{supercritical} does not hold. The current best known result is due to Souplet \cite{souplet}, who proved the conjecture for $d\le 4$, while only partial results are available for $d\ge5$.

We address here the supercritical case and focus on solutions which are stable outside a compact set, i.e. such that there exists a compact set $K\subset\R^d$ and two positive functions $\phi,\psi\in C^2(\R^d\setminus K)$ such that
$$
 -\Delta \phi \ge p\vert v \vert^{p-1}\psi, \quad -\Delta \psi \ge q\vert u \vert^{p-1}\phi\qquad\mbox{in }\; \mathbb{R}^d\setminus K.
$$
We will explain in a short moment why such an assumption is natural.
Thanks to the scaling invariance of the equation, there exists a singular solution of the form
\begin{equation}\label{singu}
(u_s,v_s)(x)=(a|x|^{-\alpha}, b|x|^{-\beta}), \quad x\in \R^d\setminus \{0\}
\end{equation}
where the scaling exponents are given by $$\alpha=\frac{2(p+1)}{pq-1}, \;\;\beta=\frac{2(q+1)}{pq-1}$$
and $a,b$ are suitable (explicit) positive constants. In the supercritical regime, thanks to a suitable version of Hardy's inequality, such a solution is stable if and only if %belongs (componentwise) to $H^1_{loc}(\R^d)$ and solves the equation in the variational sense. In addition, thanks to a suitable version of Hardy's inequality, $(u_s,v_s)$ is stable if and only if
$(p,q)$ lies on or above the Joseph-Lundgren curve i.e. when $d\ge11$ and
$
H^2\ge pq\lambda\mu,
$
where
\begin{equation}\label{ghlm}
    \gamma=\alpha-\beta,\quad H=\frac{(d-2)^2-\gamma^2}{4}, \quad \lambda=\alpha(d-2-\alpha)\quad\text{ and }\quad\mu=\beta(d-2-\beta).
\end{equation}
When $p\ge q\ge1$, Chen and the first two named authors proved in \cite{cdg} that there exists a positive radial stable solution $(u,v)\in C^2(\R^d)$ of \eqref{1.1} if and only if $d\ge11$ and $(p,q)$ lies on or above the Joseph-Lundgren curve. In the case $d\le 10$, we obtain the following optimal improvement.
\begin{thm}\label{tbth3}
Let $d\le 10$ and $p\ge q\ge1$ be such that $pq>1$.
If $(u,v)\in C^2(\R^d)$ is a nonnegative solution which is stable, or merely stable outside a compact set but with
\begin{equation}\label{not critical}
    \frac1{p+1}+\frac1{q+1}\neq 1-\frac2d,
\end{equation}
then $u=v=0$.
\end{thm}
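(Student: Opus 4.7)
The plan is to convert the pointwise stability assumption into an integrated quadratic inequality and then run a Farina-type integral bootstrap, exploiting the fact that for $d\le 10$ the Joseph--Lundgren condition is violated everywhere on the supercritical range.

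\emph{Step 1 (Hardy-type stability inequality).} From the positive supersolutions $\phi,\psi\in C^{2}(\R^{d}\setminus K)$ of the linearised system, I multiply $-\Delta\phi\ge pv^{p-1}\psi$ by $\xi^{2}/\phi$, integrate by parts, and apply Cauchy--Schwarz to get $\int pv^{p-1}(\psi/\phi)\xi^{2}\le\int|\nabla\xi|^{2}$, and symmetrically $\int qu^{q-1}(\phi/\psi)\xi^{2}\le\int|\nabla\xi|^{2}$ for every $\xi\in C_{c}^{\infty}(\R^{d}\setminus K)$. The geometric mean of the two produces the fundamental Hardy-type inequality
\[
\sqrt{pq}\int_{\R^{d}} u^{(q-1)/2} v^{(p-1)/2}\,\xi^{2}\,dx \;\le\; \int_{\R^{d}}|\nabla\xi|^{2}\,dx,\qquad \xi\in C_{c}^{\infty}(\R^{d}\setminus K),
\]
which is the only stability-type tool used below. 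Using the strong maximum principle on each equation separately reduces the problem to the case $u,v>0$.

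\emph{Step 2 (Farina bootstrap and conclusion).} Adapting to systems the scalar Farina test-function technique (in the spirit of Cowan, Esposito--Ghoussoub, and Hajlaoui--Harrabi--Ye on Lane--Emden systems), I would test the Hardy inequality against $\xi=u^{a}v^{b}\eta$ for a smooth annular cutoff $\eta$ supported away from $K$, with exponents $a,b>0$ to be optimised. Expanding $|\nabla\xi|^{2}$, using $-\Delta u=v^{p}$ and $-\Delta v=u^{q}$, and integrating by parts re-expresses the right-hand side as a combination of power integrals of the same form as the left-hand side, modulo error terms controlled by $\nabla\eta$. Matching the powers forces $(a,b)$ to satisfy a quadratic relation whose solvability with a positive leading coefficient is precisely equivalent to the strict failure of the Joseph--Lundgren inequality $H^{2}\ge pq\lambda\mu$. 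An admissible choice of $(a,b)$ then yields an estimate of the form $\int_{B_{R}} u^{\theta_{1}} v^{\theta_{2}}\,dx\le CR^{d-\sigma(a,b)}$, with $\sigma(a,b)>0$. For $d\le 10$, the sharp analysis of \cite{cdg} shows that the Joseph--Lundgren condition fails for every supercritical pair with $p\ge q\ge 1$, so the admissible set is non-empty and one can arrange $\sigma(a,b)>d$; letting $R\to\infty$ forces the integrand to vanish outside a ball, and a unique continuation or strong maximum principle argument then propagates $u\equiv v\equiv 0$ to all of $\R^{d}$.

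\emph{Main obstacle.} The delicate case is stability merely outside a compact set $K$: the cutoff $\eta$ must then also vanish in a neighbourhood of $K$, producing inner boundary contributions that are not automatically small. Absorbing them requires a priori decay estimates on $(u,v)$ at infinity, and these in turn rely on strict supercriticality. On the critical Sobolev hyperbola the Aubin--Talenti family provides smooth positive solutions stable outside a compact set, so no Liouville theorem can hold there; this is exactly the role of the hypothesis \eqref{not critical}. Strictly off the hyperbola, a Pohozaev identity or a Kelvin transform argument recovers the sharper asymptotic decay needed to kill the inner boundary terms and close the bootstrap. I expect this boundary-term control, and the coupled bookkeeping of the two-parameter family $(a,b)$ in the bootstrap, to be the main technical difficulty.
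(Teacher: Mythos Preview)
Your Step~1 is correct and matches the paper's interpolated stability inequality. The serious problem is in Step~2: the claim that the admissible range for the bootstrap is ``precisely equivalent to the strict failure of the Joseph--Lundgren inequality $H^{2}\ge pq\lambda\mu$'' is false. With only the basic inequality $\sqrt{pq}\int u^{(q-1)/2}v^{(p-1)/2}\xi^{2}\le\int|\nabla\xi|^{2}$, the Moser--Farina iteration closes exactly when $AB>1$ with $A=\sqrt{pq}\,(2a-1)/a^{2}$, $B=\sqrt{pq}\,(2b-1)/b^{2}$, $b=\tfrac{p+1}{q+1}a$; this translates into the polynomial condition $d<2+2x_{0}$ of Theorem~\ref{tbth4}, which lies \emph{strictly below} the Joseph--Lundgren curve. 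Reaching the JL curve requires the improved inequality \eqref{stability interp improved} carrying the Hardy remainder $-\tfrac{\gamma^{2}}{4}\int\xi^{2}/|x|^{2}$, and that improvement is only available for blow-down limits of asymptotically homogeneous solutions (Theorem~\ref{prop-gamma}); this gap is precisely why the paper needs the extra hypothesis in Theorem~\ref{tbth2} for $d\ge11$. Your argument is salvageable for the statement at hand because, as the paper invokes from \cite{HHM}, one has $x_{0}>4$ for all $p\ge q>1$, so $d\le10<2+2x_{0}$; but you should state the correct threshold and drop the false equivalence.

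Two further points. First, the paper does not use the mixed test function $\xi=u^{a}v^{b}\eta$; it tests separately with $u^{a}\eta$ and $v^{b}\eta$, obtains two inequalities, and combines them via Young's inequality after matching $b=\tfrac{p+1}{q+1}a$ (Lemma~\ref{Newl}). Your mixed choice produces cross gradient terms whose integration by parts is considerably messier; it may close, but you have not shown it does. Second, for solutions stable only outside a compact set your sketch is too thin: the paper's route is annular integral estimates $\Rightarrow$ Serrin's local $L^{\infty}$ bound $\Rightarrow$ pointwise decay $u=o(|x|^{-\alpha})$ $\Rightarrow$ comparison with $|x|^{2-d+\epsilon}$ to get fast decay $\Rightarrow$ Pohozaev (Theorem~\ref{prop 2.1}). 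The inner boundary terms you worry about are not ``absorbed'' directly; they disappear only after this decay bootstrapping, and that step genuinely uses \eqref{not critical}.
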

In dimension $d\ge11$, our results are sharp for a restricted class of solutions as we describe next.
\begin{definition}\label{def1}
If it exists, a blow-down limit of a solution $(u,v)\in C^2(\R^d)$ is a cluster point $(u_\infty,v_\infty)$ for the topology of uniform convergence on compact sets of $\R^d\setminus\{0\}$  of the family $(u_R,v_R)_{R\ge1}$ of rescalings defined by
\begin{equation}\label{rescalings}
(u_R,v_R)(x)=(R^{\alpha}u(Rx), R^{\beta}v(Rx))\quad\text{ for $x\in\R^d$}.
\end{equation}
A solution $(u,v)$ is said to be asymptotically homogeneous if all its blow-down limits are homogeneous i.e. if  there exists $(f,g)\in C^2(S^{d-1})$ such that $(u_\infty,v_\infty)(r\theta)=(r^{-\alpha }f(\theta), r^{-\beta} g(\theta))$ for $r>0$ and $\theta\in S^{d-1}$.
\end{definition}
With this definition in mind, we obtain the following theorem.
\begin{thm}\label{tbth2}
Let $p\ge q\ge1$ be such that $pq>1$ and
\begin{equation}\label{1.7}
  H^2<pq\lambda\mu,
\end{equation}
where $H,\lambda,\mu$ are given by \eqref{ghlm}.
If $(u,v)\in C^2(\R^d)$ is a nonnegative solution which is stable (resp. stable outside a compact set and  \eqref{not critical} holds) and asymptotically homogeneous, then $u=v=0$.
\end{thm}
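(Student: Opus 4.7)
The plan is to reduce the theorem to the non-existence of a nontrivial positive stable homogeneous solution of \eqref{1.1} on $\R^d\setminus\{0\}$ under \eqref{1.7}. First, I extract a blow-down. By Definition~\ref{def1}, a subsequence $R_k\to\infty$ yields rescalings \eqref{rescalings} converging in $C^2_{\mathrm{loc}}(\R^d\setminus\{0\})$ to a limit $(u_\infty,v_\infty)(r\theta)=(r^{-\alpha}f(\theta),r^{-\beta}g(\theta))$ that solves \eqref{1.1} away from the origin, with $f,g\in C^2(S^{d-1})$ nonnegative. Because \eqref{1.1} and its linearized quadratic form are invariant under \eqref{rescalings}, stability transfers to each $(u_{R_k},v_{R_k})$, and hence to $(u_\infty,v_\infty)$ on $\R^d\setminus\{0\}$ by testing against compactly supported pairs (Fatou on the left, dominated convergence on the right). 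In the ``stable outside a compact set'' branch, condition \eqref{not critical} is used precisely here: the scaling weight governing the ``bad'' compact remainder is nonzero off the critical Sobolev hyperbola, so the obstruction rescales away.

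The heart of the proof is showing $f\equiv g\equiv 0$. Substituting the homogeneous ansatz into \eqref{1.1} yields the spherical system
\begin{equation*}
-\Delta_{S^{d-1}}f+\lambda f=g^p, \qquad -\Delta_{S^{d-1}}g+\mu g=f^q,
\end{equation*}
with $\lambda,\mu$ as in \eqref{ghlm}. Next, into the stability inequality for $(u_\infty,v_\infty)$ I insert separated test pairs
\begin{equation*}
\phi(r,\theta)=r^{-(d-2+\gamma)/2}\chi(\log r)\eta_1(\theta),\qquad \psi(r,\theta)=r^{-(d-2-\gamma)/2}\chi(\log r)\eta_2(\theta).
\end{equation*}
The sharp radial Hardy inequality (with weight shifted by $\pm\gamma/2$) extracts exactly the constants $\lambda$ and $\mu$ from \eqref{ghlm}; after letting $\chi$ saturate the Hardy extremizer, the radial variable drops out and stability collapses to a bilinear inequality on $S^{d-1}$ schematically of the form
\begin{equation*}
\sqrt{pq\lambda\mu}\int_{S^{d-1}}f^{(p-1)/2}g^{(q-1)/2}\eta_1\eta_2\,d\theta \le \left(\int_{S^{d-1}}(|\nabla_\theta\eta_1|^2+H\eta_1^2)\,d\theta\right)^{1/2}\left(\int_{S^{d-1}}(|\nabla_\theta\eta_2|^2+H\eta_2^2)\,d\theta\right)^{1/2}.
\end{equation*}
Taking $\eta_1,\eta_2$ as positive constants (principal eigenfunctions of $-\Delta_{S^{d-1}}$) and bounding the left-hand side from below by means of the spherical system together with H\"older's inequality produces a lower bound proportional to $\sqrt{pq\lambda\mu}$, incompatible with \eqref{1.7} unless $f\equiv g\equiv 0$.

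Finally, once $(u_\infty,v_\infty)\equiv 0$, a Pohozaev-type monotonicity formula for \eqref{1.1}, i.e.\ a scale-invariant energy $E(R)$ which is non-decreasing in $R$ and constant precisely on $(-\alpha,-\beta)$-homogeneous solutions, gives $E(\infty)=0$ from the trivial blow-down and $E(0^+)=0$ from the $C^2$ regularity of $(u,v)$ at the origin. Hence $E\equiv 0$, so $(u,v)$ is $(-\alpha,-\beta)$-homogeneous on $\R^d$, forcing $u\equiv v\equiv 0$. The main obstacle is the spherical step: one must produce the sharp inequality with exactly the constants $H,\lambda,\mu$ and show it fails, below the Joseph-Lundgren curve, for \emph{every} nontrivial positive $(f,g)$ solving the spherical system -- not merely for the constant profile corresponding to the singular solution \eqref{singu}, whose stability threshold $H^2\ge pq\lambda\mu$ is the classical case treated in \cite{cdg}.
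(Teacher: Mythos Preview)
Your Step 3 contains a genuine gap: you invoke a Pohozaev-type monotonicity formula $E(R)$ for the Lane--Emden system that is non-decreasing and constant precisely on homogeneous solutions. As the paper emphasizes in the introduction, such a formula is known only in the cases $p=q$ and $q=1$ (the biharmonic case, via \cite{ddww}); for general $p>q>1$ no such monotone quantity is available, and indeed this is the central difficulty the paper is designed to circumvent. Without it, the implication ``trivial blow-down $\Rightarrow$ $(u,v)$ is itself homogeneous'' fails, and your argument stops.

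The paper's route from ``all blow-down limits are trivial'' to ``$(u,v)\equiv 0$'' is entirely different. Triviality of every blow-down limit, combined with the compactness of the rescaled family (Theorem~\ref{prop-gamma0}) and elliptic regularity, yields the pointwise decay $u(x)=o(|x|^{-\alpha})$, $v(x)=o(|x|^{-\beta})$ as $|x|\to\infty$. One then bootstraps to fast decay $u(x)\le C|x|^{2-d+\epsilon}$ (Lemma~\ref{l3}) and finishes with the Pohozaev identity on balls $B_R$ as $R\to\infty$ (Theorem~\ref{prop 2.1}). Condition~\eqref{not critical} is used in this final Pohozaev step (the supercritical sign is needed), not, as you write, to make the stability obstruction ``rescale away''.

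Your Step 2 is also incomplete, as you yourself note. Testing with constants $\eta_1,\eta_2$ only compares $H$ against $\sqrt{pq\lambda\mu}\,\fint_{S^{d-1}} f^{(q-1)/2}g^{(p-1)/2}$, and there is no a priori reason this average equals $1$ for a general nonconstant profile $(f,g)$. The paper's argument (Section~5, or equivalently Step~2 of Lemma~\ref{l1}) instead tests with $|f|^{a-1}f$ and $|g|^{b-1}g$ for $a=\tfrac{q+1}{2}$, $b=\tfrac{p+1}{2}$, multiplies the resulting inequalities, and compares the product against the spherical system tested with the same powers; all four coefficients $AB-1$, $\sqrt{pq}A\mu-H$, $\sqrt{pq}B\lambda-H$, $pq\lambda\mu-H^2$ are then shown to be positive under~\eqref{1.7}, forcing $f=g=0$. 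Your schematic inequality with the constant $\sqrt{pq\lambda\mu}$ on the left does not match this and appears to conflate the roles of $H$, $\lambda$, $\mu$.
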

Blow-down limits of nonnegative solutions of the Lane-Emden system which are stable outside a compact set are always well-defined:
\begin{thm}\label{prop-gamma0} Let $(u,v)\in C^2(\R^d)$ be a nonnegative solution of \eqref{1.1} which is stable outside a compact set. Then,
 the family $(u_R,v_R)_{R\ge1}$  given by \eqref{rescalings} is %bounded in $H^1_{loc}(\R^d\setminus\{0\})$ and
 compact in $L^{q+1}_{loc}(\R^d\setminus\{0\})\times L^{p+1}_{loc}(\R^d\setminus\{0\})$.
\end{thm}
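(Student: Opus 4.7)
The aim is to establish uniform local integrability of $(u_R,v_R)_{R\ge 1}$ in $L^{q+1}\times L^{p+1}$ on annuli in $\R^d\setminus\{0\}$, together with just enough additional regularity to invoke a compactness theorem. By the scaling invariance of \eqref{1.1}, uniform local bounds on the rescalings translate into integral decay estimates on the original $(u,v)$ at infinity. Writing $\kappa:=\alpha(q+1)=\beta(p+1)=\frac{2(p+1)(q+1)}{pq-1}$, and noting that \eqref{supercritical} forces $\kappa<d$, the key quantitative statement I would aim to prove is: there exist constants $C,R_0>0$ depending only on $(u,v)$ such that
\begin{equation*}
\int_{B_{2R}\setminus B_{R/2}} v^{p+1}\,dx + \int_{B_{2R}\setminus B_{R/2}} u^{q+1}\,dx \le C\, R^{\,d-\kappa}, \qquad \forall\, R\ge R_0.
\end{equation*}
After the change of variables $y=Rx$, this is exactly the assertion that $(u_R,v_R)$ is uniformly bounded in $L^{q+1}(A)\times L^{p+1}(A)$ for every annulus $A\Subset\R^d\setminus\{0\}$.

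The derivation of this integral decay is the heart of the proof and the principal obstacle. The stability-outside-$K$ hypothesis provides positive $\phi,\psi\in C^2(\R^d\setminus K)$ with $-\Delta\phi\ge p v^{p-1}\psi$ and $-\Delta\psi\ge q u^{q-1}\phi$. Applying the Allegretto--Piepenbrink principle separately to each supersolution and combining the two by Cauchy--Schwarz yields a Hardy-type inequality of the form
\begin{equation*}
\sqrt{pq}\int u^{(q-1)/2}\, v^{(p-1)/2}\,\xi^2\,dx \le \int|\nabla\xi|^2\,dx, \qquad \forall\,\xi\in C_c^\infty(\R^d\setminus K).
\end{equation*}
Plugging in a radial cutoff $\xi$ supported in $\{R/4<|x|<4R\}$ with $|\nabla\xi|\le C/R$ gives a first control of the mixed nonlinearity at scale $R$. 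To convert this into separate bounds on $\int v^{p+1}$ and $\int u^{q+1}$, I would multiply the two equations by $v\eta^2$ and $u\eta^2$ respectively and integrate by parts, obtaining coupled identities expressing $\int v^{p+1}\eta^2$ and $\int u^{q+1}\eta^2$ in terms of the symmetric quantity $\int\nabla u\cdot\nabla v\,\eta^2$ and gradient cross-terms scaling as $1/R$. Applying H\"older on the cross-terms and reabsorbing the mixed-power piece via the stability-based Hardy inequality should close the estimate with the sharp decay rate $R^{\,d-\kappa}$.

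Once the integral bounds are established, the rescaled system $-\Delta u_R=v_R^p$, $-\Delta v_R=u_R^q$ on slightly smaller annuli has right-hand sides uniformly bounded in $L^{(p+1)/p}_{loc}$ and $L^{(q+1)/q}_{loc}$ of $\R^d\setminus\{0\}$. Interior Calder\'on--Zygmund $L^s$ estimates then give uniform bounds on $(u_R,v_R)$ in $W^{2,s}_{loc}(\R^d\setminus\{0\})$ for a suitable $s>1$, and a finite elliptic bootstrap raises the local integrability up to $L^\infty_{loc}(\R^d\setminus\{0\})$; Schauder theory upgrades this to uniform $C^{2,\gamma}_{loc}$ bounds. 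The compact embedding $C^{2,\gamma}_{loc}\hookrightarrow L^{q+1}_{loc}\cap L^{p+1}_{loc}$ on $\R^d\setminus\{0\}$, combined with a standard diagonal extraction along an exhaustion by compact sets, then produces the desired cluster point in the stated topology.

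The main obstacle is the sharp scaling of the integral decay estimate. The stability inequality for the Lane--Emden system is structurally weaker than its scalar analogue, and extracting precisely the exponent $d-\kappa$ requires carefully matching the test-function powers to the nonlinearities without losing any power of $R$ in the H\"older interpolations. One must also verify that the gradient cross-terms from the integration by parts can effectively be absorbed, which may necessitate iterating over dyadic scales or invoking a doubling-type argument to upgrade a weak initial bound to the sharp one.
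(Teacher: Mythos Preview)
There is a genuine gap in the bootstrap step. After establishing that $(u_R,v_R)$ is bounded in $L^{q+1}_{loc}\times L^{p+1}_{loc}$ on annuli, you propose to feed $v_R^p\in L^{(p+1)/p}_{loc}$ and $u_R^q\in L^{(q+1)/q}_{loc}$ into Calder\'on--Zygmund and iterate up to $L^\infty_{loc}$. But the Sobolev exponent of $W^{2,(p+1)/p}$ is the number $a_1$ determined by $\tfrac1{a_1}=\tfrac{p}{p+1}-\tfrac2d$, and the condition $a_1>q+1$ is \emph{equivalent} to $\tfrac1{p+1}+\tfrac1{q+1}>1-\tfrac2d$, i.e.\ to subcriticality. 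Under \eqref{supercritical} the embedding returns you to at most $L^{q+1}_{loc}$ (strictly less off the critical hyperbola), and symmetrically for $v_R$; the iteration stalls at the first step. Consequently the Rellich compact embedding from $W^{2,(p+1)/p}_{loc}$ lands only in $L^r_{loc}$ for $r$ strictly below $q+1$, and you do not obtain compactness in the target space $L^{q+1}_{loc}\times L^{p+1}_{loc}$.

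The paper circumvents this by using stability to produce \emph{strictly higher} integrability from the start, rather than the borderline exponent. The Moser-type estimate of Lemma~\ref{Newl}, combined with the $L^1$ elliptic regularity argument of Lemma~\ref{lmain1}, yields a uniform bound on $\int_{B_1}u_R^{(q+1)\theta}$ for any $\theta\in\bigl[1,\tfrac{d}{d-2}\bigr)$; choosing $\theta>1$ gives boundedness of $(u_R)$ in $L^{\tilde q+1}_{loc}$ for some $\tilde q>q$, and then Souplet's comparison \eqref{estS} transfers this to $(v_R)$ in $L^{\tilde p+1}_{loc}$ with $\tilde p>p$. That extra room supplies equi-integrability in $L^{q+1}\times L^{p+1}$; together with the $W^{1,1}_{loc}$ bound coming directly from the equations, compactness follows without any bootstrap. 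The idea you are missing is that stability must be exploited to \emph{cross} the critical Sobolev threshold, not merely to reach it.
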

In addition, in the cases $p=q$ and $p> q=1$, all blow-down limits are homogeneous, thanks to the availability of a monotonicity formula (see \cite{ddww}). Unfortunately, we do not know if this fact is also true for the Lane-Emden system for other choices of exponents. Still, we can prove that our results continue to hold for a possibly wider class of solutions.
\begin{thm}\label{tbth}
Let $p\ge q\ge1$ be such that $pq>1$ and
\eqref{1.7} holds. Let $K\subset\R^d$ be a compact set.
If $(u,v)\in C^2(\R^d)$ is a nonnegative solution of \eqref{1.1} such that for all $\varphi\in C^1_c(\R^d)$ (resp. for all $\varphi\in C^1_c(\R^d\setminus K)$ and \eqref{not critical} holds),
\begin{equation}\label{stability interp improved}
\int_{\R^d}\vert u\vert^{\frac{q-1}2}\vert v\vert^{\frac{p-1}2}\varphi^2\;dx \le \frac1{\sqrt{pq}}\left(\int_{\R^d}\vert\nabla\varphi\vert^2\;dx-\frac{\gamma^2}{4}\int_{\R^d}\frac{\varphi^2}{\vert x\vert^2}\;dx\right)
\end{equation}
then $u=v=0$.
\end{thm}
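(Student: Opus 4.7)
The plan is to test the improved stability inequality \eqref{stability interp improved} against a carefully chosen two-parameter family of weighted test functions, couple the resulting inequality to the two Lane-Emden equations via integration by parts, and extract a coercive estimate whose prefactor is strictly positive exactly when \eqref{1.7} holds. Throughout, I may assume by the strong maximum principle and the coupled structure of \eqref{1.1} that $u>0$ and $v>0$; the goal is to derive a contradiction.

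The central computation proceeds as follows. Pick a smooth cutoff $\eta_R$ with $\eta_R\equiv 1$ on $B_R$, $\supp\eta_R\subset B_{2R}$, $|\nabla\eta_R|\le C/R$, and — in the stable-outside-$K$ setting — vanishing on a fixed large ball containing $K$. Plug $\varphi=u^{s}v^{t}\eta_R$ into \eqref{stability interp improved}. Expanding $|\nabla\varphi|^2$ produces gradient integrals of the form $\int u^{2s-2}v^{2t}|\nabla u|^2\eta_R^2$, $\int u^{2s}v^{2t-2}|\nabla v|^2\eta_R^2$, a mixed term in $\nabla u\cdot\nabla v$, plus cutoff errors. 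Multiplying $-\Delta u=v^p$ by $u^{2s-1}v^{2t}\eta_R^2$ and $-\Delta v=u^q$ by $u^{2s}v^{2t-1}\eta_R^2$ and integrating by parts rewrites these gradient integrals as linear combinations of the nonlinear integrals $\int u^{2s-1}v^{2t+p}\eta_R^2$ and $\int u^{2s+q}v^{2t-1}\eta_R^2$ (modulo further cutoff errors). Substituting back into \eqref{stability interp improved} and using H\"older and the pointwise identity $u^{2s+(q-1)/2}v^{2t+(p-1)/2}=(u^{2s-1}v^{2t+p})^{1/2}(u^{2s+q}v^{2t-1})^{1/2}$ to symmetrize yields a schematic estimate
\[
\bigl(\sqrt{pq}-\Phi(s,t)\bigr)\!\int u^{2s+\tfrac{q-1}{2}}v^{2t+\tfrac{p-1}{2}}\eta_R^2\,dx \le C\!\!\int_{B_{2R}\setminus B_R}\!\!u^{2s}v^{2t}\bigl(|\nabla\eta_R|^2+|x|^{-2}\eta_R^2\bigr)dx,
\]
where $\Phi(s,t)$ is an explicit rational expression in $s,t,p,q,d,\gamma$ that encodes both the integration-by-parts combinatorics and the Hardy correction $-(\gamma^2/4)\!\int\varphi^2/|x|^2$ built into \eqref{stability interp improved}.

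The decisive algebraic fact — and what I expect to be the main obstacle — is that the infimum of $\Phi(s,t)$ over an admissible set of exponents equals precisely $\sqrt{pq\lambda\mu}/H$, so that \eqref{1.7} is exactly the condition under which there exists a pair $(s,t)$ making $\Phi(s,t)<\sqrt{pq}$. This is the genuinely two-parameter replacement of the scalar Cowan-Fazly bootstrap; it forces one to track the coupling between the two equations and the weights $\lambda,\mu,H,\gamma$ from \eqref{ghlm} simultaneously, rather than reducing to a single Hardy inequality. Once a valid $(s,t)$ is fixed, I would send $R\to\infty$ using the standard annular estimate $|\nabla\eta_R|\le C/R$: in the stable case the right-hand side vanishes, forcing $\int u^{2s+(q-1)/2}v^{2t+(p-1)/2}dx=0$ and hence $u\equiv v\equiv 0$. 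In the stable-outside-$K$ case the annular error at infinity need not decay a priori, and a Pohozaev-type identity on the annulus $B_R\setminus B_{R_0}$ is required to control the boundary terms; the hypothesis \eqref{not critical} enters exactly here, ruling out the critical Sobolev balance that would otherwise permit a nontrivial scale-invariant residual and thereby closing the argument. Minor technical points — regularizing $u^sv^t$ near possible zero sets via $(u+\varepsilon)^s(v+\varepsilon)^t$, and passing \eqref{stability interp improved} to Lipschitz test functions so that $u^sv^t\eta_R$ is admissible — are handled by standard approximation using the $C^2$ regularity of $(u,v)$.
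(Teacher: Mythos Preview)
Your proposal has a genuine gap at its central step: the unjustified assertion that the infimum of your function $\Phi(s,t)$ equals $\sqrt{pq\lambda\mu}/H$. Testing \eqref{stability interp improved} with $\varphi=u^sv^t\eta_R$ and integrating by parts against the two equations is precisely the Moser-type scheme that the paper carries out in Lemma~\ref{Newl} (with $s=a$, $t=0$ and the symmetric choice $s=0$, $t=b$). That scheme yields the condition $AB>1$ and eventually the polynomial criterion $d<2+2x_0$ of Theorem~\ref{tbth4}, which is \emph{strictly weaker} than \eqref{1.7}. The constants $\lambda=\alpha(d-2-\alpha)$, $\mu=\beta(d-2-\beta)$ and $H=\tfrac{(d-2)^2-\gamma^2}{4}$ do not arise from integrating the equations against powers of $u,v$ and a plain cutoff; they encode how the singular solution $(u_s,v_s)$ interacts with the optimal Hardy weight $|x|^{-(d-2)/2}$. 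Your test functions contain no radial weight, so the term $-\tfrac{\gamma^2}{4}\int\varphi^2/|x|^2$ cannot combine with the rest of the computation to produce $H$ on the nose; at best it gives a small favourable correction, not the sharp threshold.

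The paper's route is fundamentally different. It tests \eqref{stability interp improved} with $\varphi=|u|^{a-1}u\,\varphi_0\psi$, where $a=\tfrac{q+1}{2}$ and $\varphi_0(x)=|x|^{\alpha a-(d-2)/2}$ is chosen so that $|u|^{a}\varphi_0=(u/u_s)^{a}|x|^{-(d-2)/2}$: a near-optimizer of Hardy's inequality. This specific radial weight is what makes the constants $H,\lambda,\mu$ appear exactly (Step~1 of Lemma~\ref{l1}), and the positivity of all four coefficients in Step~2 is then \emph{equivalent} to \eqref{1.7}. The outcome is not a vanishing integral at $R\to\infty$ but a local oscillation-decay estimate $\int_{B_r}|x|^{-d}(|U|^{q+1}+|V|^{p+1})\le Cr^{2\sigma}$, which via Campanato gives a universal H\"older bound for $U=u/u_s$ and $V=v/v_s$. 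Only then does one extract the pointwise decay $u(x)=o(|x|^{-\alpha})$, $v(x)=o(|x|^{-\beta})$ (Lemma~\ref{l2}) and conclude by Pohozaev (Theorem~\ref{prop 2.1}); the hypothesis \eqref{not critical} enters at this last step, as you correctly anticipated. In short, the missing idea in your proposal is the explicit Hardy-optimizing weight $\varphi_0$ and the De~Giorgi/Campanato iteration it feeds; a direct Moser bootstrap with $u^sv^t\eta_R$ cannot reach the sharp curve.
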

Assumption \eqref{stability interp improved} is motivated by the following observation.
\begin{thm}\label{prop-gamma} Let $(u,v)\in C^2(\R^d)$ be a nonnegative solution of \eqref{1.1} which is stable outside a compact set and asymptotically homogeneous. Let $(u_\infty,v_\infty)$ denote a blow-down limit. Then, $(u_\infty,v_\infty)$ satisfies \eqref{stability interp improved} for any $\varphi\in C^1_c(\R^d)$.
\end{thm}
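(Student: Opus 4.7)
The argument proceeds in three stages: transferring the pair-stability of $(u,v)$ to the rescaled family and its blow-down, choosing weighted test functions adapted to the asymmetric scaling, and extending to compactly supported test functions through the origin.

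By hypothesis, there exists a positive pair $(\phi,\psi)\in C^2(\R^d\setminus K)$ such that $-\Delta\phi\geq p|v|^{p-1}\psi$ and $-\Delta\psi\geq q|u|^{q-1}\phi$. Picone's inequality applied to $\phi$ and $\psi$ gives, for $\eta_1,\eta_2\in C^1_c(\R^d\setminus K)$,
\[
p\int v^{p-1}\eta_1^2\frac{\psi}{\phi}\,dx\le\int|\nabla\eta_1|^2\,dx,\qquad q\int u^{q-1}\eta_2^2\frac{\phi}{\psi}\,dx\le\int|\nabla\eta_2|^2\,dx,
\]
and Cauchy--Schwarz (applied to the product) eliminates the ratio to yield
\[
\sqrt{pq}\int u^{(q-1)/2}v^{(p-1)/2}\eta_1\eta_2\,dx\le\sqrt{\int|\nabla\eta_1|^2\,dx\cdot\int|\nabla\eta_2|^2\,dx}.
\]
Since $\alpha+2=\beta p$ and $\beta+2=\alpha q$, the rescaling $\phi_R(x):=R^\alpha\phi(Rx)$, $\psi_R(x):=R^\beta\psi(Rx)$ is a stability pair for $(u_R,v_R)$ on $\R^d\setminus K/R$, so the same inequality holds for the rescaled solution on $C^1_c(\R^d\setminus K/R)$. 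By Theorem \ref{prop-gamma0} and elliptic regularity, $(u_R,v_R)\to(u_\infty,v_\infty)$ in $C^2_{\mathrm{loc}}(\R^d\setminus\{0\})$ along a subsequence, and taking $R$ large and passing to the limit gives this pair-stability inequality for $(u_\infty,v_\infty)$ with test functions in $C^1_c(\R^d\setminus\{0\})$.

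The heart of the proof is the choice $\eta_1=|x|^{\gamma/2}\varphi$ and $\eta_2=|x|^{-\gamma/2}\varphi$ for $\varphi\in C^1_c(\R^d\setminus\{0\})$; then $\eta_1\eta_2=\varphi^2$. Using the integration-by-parts identity
\[
\int|\nabla(|x|^a\varphi)|^2\,dx=-a(a+d-2)\int|x|^{2a-2}\varphi^2\,dx+\int|x|^{2a}|\nabla\varphi|^2\,dx,
\]
valid for $\varphi\in C^1_c(\R^d\setminus\{0\})$, together with the arithmetic $-(\gamma/2)(\gamma/2+d-2)+(\gamma/2)(d-2-\gamma/2)=-\gamma^2/2$, a careful application of Cauchy--Schwarz to the product of $\int|\nabla\eta_1|^2$ and $\int|\nabla\eta_2|^2$ yields
\[
\sqrt{\int|\nabla\eta_1|^2\cdot\int|\nabla\eta_2|^2}\le\int|\nabla\varphi|^2\,dx-\frac{\gamma^2}{4}\int\frac{\varphi^2}{|x|^2}\,dx,
\]
provided the right-hand side is non-negative (which is automatic from the standard Hardy inequality and the constraint $|\gamma|\le d-2$). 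Combining with the previous paragraph gives \eqref{stability interp improved} for $\varphi\in C^1_c(\R^d\setminus\{0\})$. To extend to $\varphi\in C^1_c(\R^d)$, approximate by $\chi_\epsilon\varphi$ where $\chi_\epsilon$ vanishes on $B_\epsilon(0)$ and equals $1$ outside $B_{2\epsilon}(0)$; the capacity error $\int\varphi^2|\nabla\chi_\epsilon|^2\lesssim\epsilon^{d-2}$ vanishes as $\epsilon\to0$ since $d\ge3$, while the other integrals converge by dominated convergence.

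The main difficulty lies in the weighted-gradient identity leading to the Hardy improvement: the mixed weights $|x|^{\pm\gamma}$ in $\int|\nabla\eta_i|^2$ must be combined through a tight Cauchy--Schwarz argument that precisely exploits the algebraic structure of the Joseph--Lundgren constant $H=((d-2)^2-\gamma^2)/4$. The coefficient $\gamma^2/4$ is extracted through the asymmetric scaling ($\alpha\ne\beta$, equivalently $\gamma\ne0$) of the blow-down; consistently, when $p=q$ (so $\gamma=0$) the improved and basic stabilities coincide, and no extra Hardy term appears.
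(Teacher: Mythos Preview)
The central inequality you claim,
\[
\sqrt{\int|\nabla(|x|^{\gamma/2}\varphi)|^2\cdot\int|\nabla(|x|^{-\gamma/2}\varphi)|^2}\;\le\;\int|\nabla\varphi|^2-\frac{\gamma^2}{4}\int\frac{\varphi^2}{|x|^2},
\]
is simply false for general $\varphi\in C^1_c(\R^d\setminus\{0\})$ as soon as $\gamma>0$. Take $\varphi=\varphi_1+\varphi_2$ with disjoint supports, $\varphi_1$ a fixed bump near $\{|x|=1\}$ and $\varphi_2$ a fixed bump near $\{|x|=R\}$. By the very identity you quote, $\int|\nabla(|x|^{\gamma/2}\varphi)|^2$ contains the term $\int|x|^{\gamma}|\nabla\varphi_2|^2\sim R^{\gamma}$, while $\int|\nabla(|x|^{-\gamma/2}\varphi)|^2$ tends to the positive constant $\int|\nabla(|x|^{-\gamma/2}\varphi_1)|^2$ as $R\to\infty$ (the $\varphi_2$ contribution being $O(R^{-\gamma})$). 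Hence the left-hand side diverges, whereas the right-hand side stays bounded. No ``careful'' Cauchy--Schwarz can rescue this: a product of two global integrals cannot detect the scale-by-scale cancellation that the opposite weights $|x|^{\pm\gamma}$ would require. Your arithmetic $c_1+c_2=-\gamma^2/2$ is correct but irrelevant, because the coefficients $c_1,c_2$ sit in front of \emph{different} weighted integrals $\int|x|^{\pm\gamma-2}\varphi^2$, not the same one.

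The ingredient you have dropped is the homogeneity of the blow-down limit, which your argument never invokes after the first paragraph. In the paper the gain $\gamma^2/4$ is obtained precisely by exploiting homogeneity to separate variables: one first passes the second-order stability $pq\int v_\infty^{p-1}\eta^2\le\int u_\infty^{1-q}(\Delta\eta)^2$ to the sphere by testing with $\eta=h(r)\varphi(\theta)$ and optimizing the radial profile $h\approx r^{-(d-2-\gamma)/2}$ (this is where $H=\big((d-2)^2-\gamma^2\big)/4$ appears); then one interpolates via Picone \emph{on $S^{d-1}$}, where there is no weight to decouple scales; and only then does one rebuild the $\R^d$ inequality by integrating the spherical estimate against $r^{d-3}\,dr$ and adding the one-dimensional Hardy inequality in the radial direction. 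Your weighted test functions attempt to collapse these steps into a single global Cauchy--Schwarz, and that collapse is exactly what fails. (Consistently, your argument does go through when $\gamma=0$, i.e.\ $p=q$, which is the only case where the improved and basic stability inequalities coincide.)
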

Finally, we obtain the following partial result for solutions which need not be asymptotically homogeneous.
\begin{thm}\label{tbth4}
Let $p\ge q\ge1$ be such that $pq>1$ and
\begin{equation}\label{tech}
d< 2+2x_0,
\end{equation}\label{polyH}
where $x_0$ is the largest root of the polynomial
\begin{equation}\label{}
  H(x)=x^4-pq\alpha\beta(4x^2-2(\alpha+\beta)x+\alpha\beta).
\end{equation}
If $(u,v)\in C^2(\R^d)$ is a nonnegative solution of \eqref{1.1} which is stable (resp. stable outside a compact set and \eqref{not critical} holds), then $u=v=0$.
\end{thm}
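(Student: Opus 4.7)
The plan is to extend Farina's stability-based Liouville scheme (developed for $-\Delta u=u^p$) to the Lane--Emden system. I would start from the standard stability inequality
\[
\sqrt{pq}\int_{\R^d} u^{(q-1)/2} v^{(p-1)/2}\,\varphi^2\,dx \le \int_{\R^d}|\nabla\varphi|^2\,dx,
\]
which follows from the assumed existence of positive supersolutions $\phi,\psi$ (valid for $\varphi\in C_c^1(\R^d)$ in the fully stable case, and $\varphi\in C_c^1(\R^d\setminus K)$ in the stable-outside-compact case). I then test it with $\varphi=u^a v^b\eta$, where $a,b$ are parameters to be optimized and $\eta$ is a standard cutoff equal to $1$ on $B_R$ (minus a fixed neighborhood of $K$ in the second case), supported in $B_{2R}$, with $|\nabla\eta|\le C/R$.

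Expanding $|\nabla\varphi|^2$ produces two pure gradient integrals $\int u^{2a-2}v^{2b}|\nabla u|^2\eta^2$ and $\int u^{2a}v^{2b-2}|\nabla v|^2\eta^2$, a cross term $\int u^{2a-1}v^{2b-1}\nabla u\cdot\nabla v\,\eta^2$, and cutoff errors in $|\nabla\eta|^2$. Using the PDEs $-\Delta u=v^p$, $-\Delta v=u^q$ and integration by parts, the first two become $\int u^{2a-1}v^{2b+p}\eta^2$ and $\int u^{2a+q}v^{2b-1}\eta^2$ respectively (modulo cutoff errors absorbable via Young's inequality). For the cross term I would use the pointwise identity $2\nabla u\cdot\nabla v=\Delta(uv)+u^{q+1}+v^{p+1}$, which after one more integration by parts again contributes only the same two monomials. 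The crucial algebraic observation is that $p\beta=\alpha+2$ and $q\alpha=\beta+2$ (direct consequences of the definitions of $\alpha,\beta$), so $p\beta-q\alpha=\alpha-\beta$ and the two monomials $u^{2a-1}v^{2b+p}$ and $u^{2a+q}v^{2b-1}$ share the same scaling weight $\alpha A+\beta B$, allowing them to be grouped. The inequality then reduces to
\[
\Phi(a,b)\int_{\R^d} u^A v^B\,\eta^2\,dx \le \frac{C}{R^2}\int_{B_{2R}\setminus B_R}u^{A'}v^{B'}\,dx,
\]
where $\Phi(a,b)$ is an explicit quadratic form and the $R$-scaling of the remainder is controlled by the same weight $\alpha A+\beta B$.

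One then seeks $(a,b)$ for which $\Phi(a,b)>0$ and the remainder decays in $R$. Imposing the scaling-matching constraint $\alpha A+\beta B=d-2$ eliminates one parameter, reducing the feasibility question to a single-variable condition $H(x)<0$ (with $x=x(a,b)$ the induced one-variable substitution). Since $H$ is quartic with positive leading coefficient, $\{H<0\}$ is bounded above by $x_0$, and an admissible $(a,b)$ exists precisely when $d-2<2x_0$. With such $(a,b)$, sending $R\to\infty$ forces $\int_{\R^d}u^A v^B=0$; hence $uv\equiv 0$, and the strong maximum principle applied to both PDEs yields $u\equiv v\equiv 0$. Condition \eqref{not critical} is used in the stable-outside-compact variant to rule out the critical Sobolev hyperbola and guarantee the decay estimates needed to justify the boundary manipulations at the inner cutoff. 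The main obstacle is precisely this algebraic Step: carrying out the two-parameter optimization, correctly absorbing the indefinite cross term with weights adapted to $(\alpha,\beta)$, and verifying that the resulting admissibility region collapses exactly to $d<2+2x_0$ with $x_0$ the largest root of the polynomial $H$ in \eqref{polyH}.
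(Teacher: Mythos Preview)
Your approach diverges from the paper's in two essential respects, and in both places there is a genuine gap.

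\textbf{Choice of test function and the cross term.} The paper never tests the stability inequality with the product $\varphi=u^av^b\eta$. Instead it tests \emph{separately} with $\varphi=u^a\eta$ and $\varphi=v^b\eta$ (with the single link $b=\tfrac{p+1}{q+1}a$), obtaining two inequalities that are then combined by Young's inequality together with Souplet's comparison $v^{p+1}\le C u^{q+1}$. This completely avoids any $\nabla u\cdot\nabla v$ cross term. In your scheme the cross term is unavoidable, and your proposed handling is not correct as stated: the pointwise identity $2\nabla u\cdot\nabla v=\Delta(uv)+u^{q+1}+v^{p+1}$ applies to the \emph{unweighted} product, whereas your cross term carries the weight $u^{2a-1}v^{2b-1}$. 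If you integrate by parts instead, the cross term feeds back into the ``pure'' gradient integrals (e.g.\ $\int u^{2a-2}v^{2b}|\nabla u|^2\eta^2$ reappears with a new coefficient), so the system does not close on the two monomials $u^{2a-1}v^{2b+p}$ and $u^{2a+q}v^{2b-1}$ as you claim. Even if one can eventually balance the coefficients, there is no reason to expect the resulting admissibility condition to coincide with the paper's condition $AB>1$ (where $A=\sqrt{pq}\,\tfrac{2a-1}{a^2}$, $B=\sqrt{pq}\,\tfrac{2b-1}{b^2}$), which is what is equivalent to $H(a\alpha)<0$ for the specific polynomial $H$ in the statement. Your reduction ``collapses exactly to $d<2+2x_0$'' is asserted but not justified.

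\textbf{The stable-outside-a-compact-set case.} Here the paper does substantially more than ``justify boundary manipulations at the inner cutoff''. From the integral estimates one first deduces $u(x)=o(|x|^{-\alpha})$ via a Serrin--Zou local $L^\infty$ bound, then upgrades this (by comparison with $|x|^{-a}$ supersolutions of $-\Delta f-\varepsilon^2|x|^{-2}f=0$) to the fast decay $u\le C|x|^{2-d+\varepsilon}$ with matching gradient bounds, and finally invokes a Pohozaev identity to reach a contradiction in the supercritical range. None of these steps appear in your outline; sending $R\to\infty$ in your main inequality only controls the annulus $B_{2R}\setminus B_R$ and says nothing about the fixed inner region around $K$, so you cannot conclude $\int u^Av^B=0$ directly. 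Condition \eqref{not critical} enters in the paper not to ``guarantee decay estimates at the inner cutoff'' but to ensure $d/\alpha>2$ so that the integrability needed for the Serrin--Zou estimate and the Pohozaev argument is available.
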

\begin{rem} The sharp condition \eqref{1.7} can be reformulated as \eqref{tech} where this time $x_0$ must be interpreted as the largest root of the polynomial
$$
H_{JL}(x)=(x^2-\gamma^2/4)^2-pq\alpha\beta(4x^2-2(\alpha+\beta)x+\alpha\beta).
$$
\end{rem}

 %In fact, the positivity assumption can be removed in that case:
%\begin{thm}\label{thm-homo}Let $p\ge q>0$ be such that $pq>1$, \eqref{not critical} and \eqref{1.7} hold. If $(u,v)\in H^1_{loc}(\R^d)$ is a homogeneous stable solution of \eqref{1.1}, then $u=v=0$.
%\end{thm}
The above theorems %is sharp and
build upon a wealth of previously known results, as we describe next. Thanks to the comparison inequality
\begin{equation}\label{souplet ineq}
\frac{v^{p+1}}{p+1}\le\frac{u^{q+1}}{q+1}
\end{equation}
established for $p\ge q$ and for bounded positive solutions by Souplet in \cite{souplet}, such solutions of \eqref{1.1} must satisfy $u=v$ in the case $p=q$, so that the system becomes a single equation and the condition \eqref{1.7} reduces to $p<p_c(d)$, where $p_c(d)$ is the Joseph-Lundgren stability exponent discovered in \cite{jl}. Crandall and Rabinowitz proved in \cite{cr} that, given a smoothly bounded domain $\Omega\subset\R^d$ and the nonlinearity $f(u)=(1+u)^p$, the equation
\begin{equation*}
    \left\{
    \begin{aligned}
    -\Delta u&=\lambda f(u)&\quad\text{in $\Omega$,}\\
    u&=0&\quad\text{on $\partial\Omega$,}
    \end{aligned}
    \right.
\end{equation*}
admits a curve of solutions $\lambda\in [0,\lambda^*)\mapsto u_\lambda\in C^2(\overline\Omega)$, which are stable. They applied Moser's method \cite{moser} with a twist: substituting Sobolev's inequality by the variational formulation of the stability property in the iteration process, they proved uniform boundedness of the family $(u_\lambda)_{\lambda\in[0,\lambda^*)}$ in $L^\infty(\Omega)$ for $p<p_c(d)$. This implies the existence of a smooth stable solution $u^*$ also for the extremal parameter $\lambda=\lambda^*$. Conversely, for $p\ge p_c(d)$ and $\Omega=B_1$, the extremal solution $u^*$ still exists but is singular (and $u^*-1$ is given by \eqref{singu} for $p=q$).

For $p<p_S(d)$ where $p_S(d)$ is the Sobolev critical exponent and $f(u)=\vert u\vert^{p-1}u$, Bahri and Lions \cite{bl} proved that the $L^\infty$ bound extends and is in fact equivalent to a bound on the Morse index of (possibly sign-changing) solutions. They argued by a blow-up argument and proved Theorem \ref{tbth} for finite Morse index solutions of the subcritical equation. Farina extended their result to all $p<p_c(d)$, $p\neq p_S(d)$  and to solutions stable outside a compact set, observing that finite Morse index solutions belong to this class. In fact the two notions coincide, as proved by Devyver \cite{devyver}. In other words, Theorem \ref{tbth2} generalizes Farina's result to the case of positive asymptotically homogeneous solutions of the system \eqref{1.1}.
We note that the restriction $p\neq p_S(d)$ is necessary in Farina's result, and so is the restriction \eqref{not critical} for the Lane-Emden system \eqref{1.1}. Indeed, in the critical case, Lions proved in \cite{lions} that \eqref{1.1} has a ground state solution, which is unique up to scaling and translation. He observed that if $(u,v)$ is a positive solution of \eqref{1.1}, then
$$
-\Delta((-\Delta v)^{\frac1q})=v^p\quad\text{in $\R^d$}
$$
and so a ground state solution can be sought by minimizing $\mathcal E(v)=\Vert \Delta v\Vert_{L^m(\R^d)}$, $m=\frac{q+1}{q}$, over the set of functions $v\in\mathcal D^{2,m}(\R^d)$ such that $\Vert v\Vert_{L^{p+1}(\R^d)}=1$. Such a solution is positive, radial and stable outside a compact set by construction (or as follows from the asymptotics computed by Hulshof and Van der Vorst in \cite{hv}). As observed by Mtiri and Ye in \cite{MY}, the second variation of the energy $\mathcal E$ can be computed at positive stable solutions in the case $p> 1\ge q$, leading to the following reformulation of stability:
\begin{equation}\label{stability}
\int_{\R^d} \vert v \vert^{p-1}\varphi^2\;dx \le \frac1{pq}\int_{\R^d} u^{1-q}\vert\Delta\varphi\vert^2\;dx\quad\text{for all $\varphi\in C^2_c(\R^d)$}.
\end{equation}
%As we shall prove, this inequality actually holds for any solution of \eqref{1.1} stable outside a compact set and without restriction on the exponents, with the convention that the right-hand side equals $+\infty$ if the intersection of the support of $\varphi$ with the set $[u=0]$ has positive Lebesgue measure.

Let us turn now to the well understood biharmonic case $q=1$. In that case and for positive radial stable solutions, Theorem \ref{tbth} follows from the works of Gazzola, Grunau \cite{gg}, Guo, Wei \cite{gw} and Karageorgis \cite{k}. Since the Moser iteration method is based on the chain rule, its adaptation to fourth order equations is nontrivial. Still, Wei and Ye \cite{wy} classified positive stable solutions when $d\le 8$ thanks to the inequality \eqref{souplet ineq} (a strategy inspired by earlier work of Cowan, Esposito and Ghoussoub \cite{ceg}).  Following the same Moser iteration strategy, Harrabi, Ye and the third named author classified positive stable solutions for $d\le 12$ in \cite{hhy}. They exploited the following interpolated version of the stability inequality:
\begin{equation}\label{stability interp}
\int_{\R^d}\vert u\vert^{\frac{q-1}2}\vert v\vert^{\frac{p-1}2}\varphi^2\;dx \le \frac1{\sqrt{pq}}\int_{\R^d}\vert\nabla\varphi\vert^2\;dx%\quad\text{for all $\varphi\in C^1_c(\R^d\setminus K)$},
\end{equation}
of which several proofs are available (based on Picone's identity in Cowan \cite{co}, Cowan and Ghoussoub \cite{cg} and Farina, Sirakov and the first author \cite{dfs}, interpolation theory in Goubet, Warnault and the first two named authors \cite{dggw}). Finally, Theorem \ref{tbth} was proved for the full range of exponents in the biharmonic case by D\'avila, Wang, Wei and the first named author \cite{ddww}. Their result crucially relies on a monotonicity formula which is unavailable for the system.

The complete system was studied by Cowan \cite{co}, who classified positive stable solutions for $d\le 10$ and $p\ge q\ge2$ thanks to \eqref{stability interp}. His result was improved by Harrabi, Mtiri and the third named author \cite{HHM}, who classified positive stable solutions for $d\le 10$ and $p\ge q>4/3$ and bounded positive stable solutions for $d\le 6$ and $p\ge q>1$. Also, Mtiri and Ye \cite{MY} completely classified positive solutions stable outside a compact set for $(p,q)$ subcritical.

In order to prove Theorem \ref{tbth}, we propose a new strategy, where (near optimizers of) Hardy's inequality turn out to have a central role. Due to the lack of a monotonicity formula, any blow-down (resp. blow-up) limit of a given solution need not be homogeneous {\it a priori}. %Nevertheless, we prove that such a limit satisfies an improved version of the estimate \eqref{stability interp}, namely
To bypass this difficulty, we assume that \eqref{stability interp improved} holds. Now, rather than Moser iteration, we feed \eqref{stability interp improved} in an iteration scheme which is closer to De Giorgi's original idea \cite{dg}. More precisely, we prove a reduction of the oscillation lemma for solutions satisfying \eqref{stability interp improved}. We capture the oscillations through the rescaled and renormalized Dirichlet energy
\begin{equation}\label{dirichlet}
    r^2\fint_{B_r}\left\vert\nabla \left(\frac{u}{u_s}\right)^{a} \right\vert^2dx + r^2\fint_{B_r}\left\vert\nabla \left(\frac{v}{v_s}\right)^{b} \right\vert^2dx
\end{equation}
where $(a,b)=\left(\frac{q+1}{2}, \frac{p+1}{2}\right)$ and $(u_s,v_s)$ is given by \eqref{singu}. Thanks to Campanato's characterization of H\"older spaces \cite{campanato}, this is enough to obtain the following universal H\"older estimate on local stable solutions of the system:
\begin{equation}\label{elliptic regularity}
    \left\Vert{u}/{u_s}\right\Vert_{C^\sigma(B_1)}+ \left\Vert{v}/{v_s}\right\Vert_{C^\sigma(B_1)} \le C%\left(\Vert u\Vert_{L^1(B_2)}+\Vert v\Vert_{L^1(B_2)}\right)
\end{equation}
which holds for some $\sigma=\sigma(d,p,q)\in (0,1)$ and $C=C(d,p,q)>0$ under the assumption \eqref{stability interp improved}.
Note that thanks to the translation invariance of the Lane-Emden system, the above estimate can be applied to any translation of a given solution and so H\"older regularity for the solution itself follows. Also, by a natural scaling argument, the classification follows for all solutions such that \eqref{stability interp improved} holds in $\R^d$. In particular, blow-down limits are trivial under the assumptions of Theorem \ref{tbth}. This gives enough asymptotic information to conclude that solutions such that \eqref{stability interp improved} holds only outside a compact set are also trivial. Estimate \eqref{elliptic regularity} bears resemblance with the work of Cabr\'e, Figalli, Ros-Oton and Serra \cite{cfrs}. In their case, a version of Pohozaev's identity is central to the analysis. In our framework, we derive \eqref{elliptic regularity} through a much different road, where Hardy's inequality is used instead. It is also interesting to note that the scale invariance of the equation (resp. the nonlinear nature of the problem) appears explicitly in the definition of the Dirichlet energy \eqref{dirichlet} through the normalization by $(u_s,v_s)$ (resp. through the exponents $a,b$ reminiscent of Moser's iteration).

\section{Stability revisited} In this section, we derive all the functional inequalities presented in the introduction. They are reformulations (or sometimes merely consequences of) stability and they are at the heart of the iteration methods used in this paper. Recall that a solution $(u,v)\in C^2(\Omega)$ is said to be stable in an open set $\Omega\subset\R^d$ if there exist two positive functions $\phi,\psi\in C^2(\Omega)$ such that
\begin{equation}\label{stabb}
 -\Delta \phi \ge pv^{p-1}\psi, \quad -\Delta \psi \ge qu^{q-1}\phi\qquad\mbox{in }\; \Omega.
\end{equation}
In the next two lemmas, we give a variational reformulation of \eqref{stabb}.
\begin{lem}\label{2}Let $\varphi\in C^2(\Omega)$ be a positive superharmonic function and $\eta\in C^2(\Omega)$. Then,
$$
\Delta\left(\frac{\eta^2}\varphi\right)\Delta\varphi \le (\Delta\eta)^2.
$$
\end{lem}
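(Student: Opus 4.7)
The plan is to prove the inequality pointwise by a direct computation that reveals two hidden squares, one of Picone type on gradients and one on Laplacians. Set $w = \eta^2/\varphi$; the first step is to expand $\Delta w$ using the product and quotient rules, keeping careful track of the mixed gradient term $\nabla\eta\cdot\nabla\varphi$. Grouping the purely gradient pieces, one should obtain
$$
\Delta\!\left(\frac{\eta^2}{\varphi}\right)=\frac{2}{\varphi}\left|\nabla\eta-\frac{\eta}{\varphi}\nabla\varphi\right|^2+\frac{2\eta\,\Delta\eta}{\varphi}-\frac{\eta^2\,\Delta\varphi}{\varphi^2},
$$
so that the Picone-type square $|\nabla\eta-(\eta/\varphi)\nabla\varphi|^2\ge0$ is isolated.

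Next, multiply both sides by $\Delta\varphi$. The superharmonicity hypothesis $\Delta\varphi\le0$ together with $\varphi>0$ forces the coefficient $2\Delta\varphi/\varphi$ in front of the Picone square to be nonpositive, so that term can simply be dropped (it is $\le 0$). This leaves the estimate
$$
\Delta\!\left(\frac{\eta^2}{\varphi}\right)\Delta\varphi\;\le\;\frac{2\eta\,\Delta\eta\,\Delta\varphi}{\varphi}-\frac{\eta^2(\Delta\varphi)^2}{\varphi^2}.
$$

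The final step is to recognize the right-hand side as a completed square. Indeed,
$$
\frac{2\eta\,\Delta\eta\,\Delta\varphi}{\varphi}-\frac{\eta^2(\Delta\varphi)^2}{\varphi^2}=(\Delta\eta)^2-\left(\Delta\eta-\frac{\eta\,\Delta\varphi}{\varphi}\right)^2\;\le\;(\Delta\eta)^2,
$$
which concludes the proof. No step is truly an obstacle here; the only thing to be careful about is keeping the signs straight (in particular that $\Delta\varphi\le0$ is used exactly once, to discard the Picone term) and that $\eta$ is allowed to change sign, so the identity $\eta^2\ge0$ is all that is needed, rather than any positivity of $\eta$ itself.
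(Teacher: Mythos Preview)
Your proof is correct and follows essentially the same route as the paper's: both arguments rest on the identity
\[
\Delta\!\left(\frac{\eta^2}{\varphi}\right)\Delta\varphi
=(\Delta\eta)^2-\left(\Delta\eta-\frac{\eta}{\varphi}\Delta\varphi\right)^2
+\frac{2\Delta\varphi}{\varphi}\left|\nabla\eta-\frac{\eta}{\varphi}\nabla\varphi\right|^2,
\]
dropping the Picone gradient square via $\Delta\varphi\le0$ and the Laplacian square by nonnegativity. The only cosmetic difference is that the paper records this as a single exact identity before bounding, whereas you drop the Picone term first and then complete the square; the content is identical.
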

\begin{proof}
Expand $\Delta\left(\frac{\eta^2}\varphi\right)$:
\begin{align*}
\Delta\left(\frac{\eta^2}\varphi\right)&= \frac1\varphi\Delta(\eta^2)+2\nabla\eta^2\cdot\nabla\frac1{\varphi}+\eta^2\Delta\left(\frac1\varphi\right)\\
&=\frac2\varphi(\eta\Delta\eta+\vert\nabla\eta\vert^2) - 4\frac{\eta}{\varphi^2}\nabla\eta\cdot\nabla\varphi + \eta^2\left(-\frac1{\varphi^2}\Delta\varphi+\frac2{\varphi^3}\vert\nabla\varphi\vert^2\right)\\
&=\left[
2\frac\eta\varphi\Delta\eta -\frac{\eta^2}{\varphi^2}\Delta\varphi
\right]+
\left[
\frac2\varphi\vert\nabla\eta\vert^2 - 4\frac{\eta}{\varphi^2}\nabla\eta\cdot\nabla\varphi+2\frac{\eta^2}{\varphi^3}\vert\nabla\varphi\vert^2
\right]\\
&=-\left[
\frac{\eta^2}{\varphi^2}\Delta\varphi-
2\frac\eta\varphi\Delta\eta
\right]+
\frac2{\varphi}\left[
\vert\nabla\eta\vert^2 - 2\frac{\eta}{\varphi}\nabla\eta\cdot\nabla\varphi+\frac{\eta^2}{\varphi^2}\vert\nabla\varphi\vert^2
\right].
\end{align*}
Observe that the second bracket above is a perfect square. Multiply by $\Delta\varphi$ and complete the square in the first bracket to get
\begin{equation}\label{squares}
\Delta\left(\frac{\eta^2}\varphi\right)\Delta\varphi=
\left[
-\left(\frac\eta\varphi\Delta\varphi-\Delta\eta\right)^2+(\Delta\eta)^2
\right]+
2\frac{\Delta\varphi}\varphi
\left\vert
\nabla\eta - \frac\eta\varphi \nabla\varphi
\right\vert^2
\le (\Delta\eta)^2,
\end{equation}
since $\Delta\varphi\le0$.
\end{proof}

\begin{lem}\label{lem2}Assume that $(u,v)\in C^2(\Omega)$ is positive and stable in $\Omega$. Let $\Omega'\Subset\Omega$. Then, for every $\eta\in H^1_0(\Omega')\cap H^2(\Omega')$,% supported away from the zeros of $u$,
$$pq\int v^{p-1}\eta^2 \le \int  u^{1-q}(\Delta \eta)^2$$
and
\begin{equation}\label{stabvar}
pq\int u^{q-1}\eta^2 \le \int  v^{1-p}(\Delta \eta)^2.
\end{equation}

\end{lem}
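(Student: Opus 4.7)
The plan is to test the first stability inequality against the positive function $\eta^2/\psi$, integrate by parts twice, and then feed the resulting inequality into the pointwise estimate of Lemma~\ref{2} together with the second stability inequality.

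First, since $\psi$ is smooth and strictly positive on the compact set $\overline{\Omega'}$ and $\eta \in H^1_0(\Omega')$, the test function $\eta^2/\psi$ lies in $H^1_0(\Omega')$, with $\eta^2/\psi=0$ and $\partial_\nu(\eta^2/\psi)=0$ on $\partial\Omega'$ (the latter because $\partial_\nu(\eta^2/\psi)=2\eta\partial_\nu\eta/\psi-\eta^2\partial_\nu\psi/\psi^2$ vanishes once $\eta\equiv 0$ there). Multiplying $-\Delta\phi\ge p v^{p-1}\psi$ by $\eta^2/\psi$, integrating, and integrating by parts twice (all boundary terms vanishing) yields
\begin{equation*}
\int \phi\bigl(-\Delta(\eta^2/\psi)\bigr)\,dx\;\ge\;p\int v^{p-1}\eta^2\,dx.
\end{equation*}

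Next, since $-\Delta\psi\ge qu^{q-1}\phi>0$, the function $\psi$ is positive and superharmonic on $\Omega'$, so Lemma~\ref{2} applied with $\varphi=\psi$ gives the pointwise inequality $\bigl(-\Delta(\eta^2/\psi)\bigr)\bigl(-\Delta\psi\bigr)\le (\Delta\eta)^2$. Dividing through by the strictly positive $-\Delta\psi$ produces
\begin{equation*}
-\Delta(\eta^2/\psi)\;\le\;\frac{(\Delta\eta)^2}{-\Delta\psi}\qquad\text{pointwise,}
\end{equation*}
an inequality which, crucially, no longer depends on the sign of $\Delta(\eta^2/\psi)$. Multiplying by $\phi\ge 0$ and using the second stability inequality once more in the rearranged form $\phi/(-\Delta\psi)\le u^{1-q}/q$ gives
\begin{equation*}
\phi\bigl(-\Delta(\eta^2/\psi)\bigr)\;\le\;\frac{1}{q}\,u^{1-q}(\Delta\eta)^2\qquad\text{pointwise.}
\end{equation*}
Integrating and chaining with the first step,
\begin{equation*}
p\int v^{p-1}\eta^2\,dx\;\le\;\int\phi\bigl(-\Delta(\eta^2/\psi)\bigr)\,dx\;\le\;\frac{1}{q}\int u^{1-q}(\Delta\eta)^2\,dx,
\end{equation*}
which rearranges to the first claimed inequality. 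The second inequality \eqref{stabvar} follows by the same argument with the roles of $(\phi,\psi)$ and of $(u,v,p,q)$ interchanged symmetrically.

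The main conceptual obstacle, as I see it, is the sign-indefiniteness of $\Delta(\eta^2/\psi)$: a naive attempt to substitute $\phi\le u^{1-q}(-\Delta\psi)/q$ directly under the integral on the left-hand side of the first displayed inequality fails, since one does not know whether $-\Delta(\eta^2/\psi)\ge 0$, and multiplying a bound on $\phi$ by $-\Delta(\eta^2/\psi)$ may reverse the inequality. The fix, as above, is to first extract from Lemma~\ref{2} an upper bound on $-\Delta(\eta^2/\psi)$ \emph{alone} (using the strict positivity of $-\Delta\psi$), after which multiplication by the nonnegative factor $\phi$ preserves the inequality. The remaining technical points (legitimacy of $\eta^2/\psi$ as a test function, vanishing of boundary terms on $\supp\eta$, integrability of the products involved) are then routine.
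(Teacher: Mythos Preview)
Your proof is correct and uses the same ingredients as the paper's: test $-\Delta\phi\ge pv^{p-1}\psi$ with $\eta^2/\psi$, integrate by parts, and combine the second stability inequality with Lemma~\ref{2}. The only difference is in handling the indefinite sign of $-\Delta(\eta^2/\psi)$: the paper restricts the integral to the set $\{-\Delta(\eta^2/\psi)\ge 0\}$ before substituting $\phi\le q^{-1}u^{1-q}(-\Delta\psi)$ and then invokes Lemma~\ref{2}, whereas you first divide the pointwise conclusion of Lemma~\ref{2} by the strictly positive $-\Delta\psi$ and only then multiply by $\phi$---a slightly cleaner reordering that sidesteps the set restriction.
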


\begin{proof}By symmetry, it suffices to prove the first inequality. We may also assume that $\eta\in C^2(\overline{\Omega'})$. Multiply the first inequality in \eqref{stabb} by $q\eta^2/\psi$ and integrate over $\Omega'$ to get
$$
pq\int v^{p-1}\eta^2\le q\int(-\Delta\phi)\frac{\eta^2}\psi= q\int -\Delta\left(\frac{\eta^2}\psi\right)\phi\le q\int_{[-\Delta\left(\frac{\eta^2}\psi\right)\ge0]} -\Delta\left(\frac{\eta^2}\psi\right)\phi.
$$
By the second inequality in \eqref{stabb}, it follows that
$$
pq\int v^{p-1}\eta^2\le \int_{[-\Delta\left(\frac{\eta^2}\psi\right)\ge0]}  u^{1-q}\Delta\left(\frac{\eta^2}\psi\right)\Delta\psi,
$$
and the conclusion follows by Lemma \ref{2}.
\end{proof}
In the next lemma, we characterize stability for homogeneous blow-down limits.

\begin{lem}\label{lemma3} Let  $(u,v)\in C^2(\R^d)$ be a solution of \eqref{1.1}. Assume that $(u,v)\in C^2(\R^d)$ is positive, stable outside a compact set and asymptotically homogeneous. Let $(u_\infty,v_\infty)(r\theta)=(r^{-\alpha} f(\theta), r^{-\beta} g(\theta))$ denote a blow-down limit of $u$. Then, for every $\varphi\in C^2(S^{d-1})$,
$$
pq\int_{S^{d-1}} \vert g\vert^{p-1}\varphi^2\le \int_{S^{d-1}}  f^{1-q}\vert\Delta_\theta\varphi-H\varphi\vert^2,
$$
where $H$ is given by \eqref{ghlm}.
\end{lem}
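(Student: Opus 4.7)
I would pass the fourth-order variational stability inequality of Lemma~\ref{lem2} to the blow-down limit, and then test this limiting inequality against a separated-variables function whose radial part encodes the Hardy weight $H$.

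\textbf{Step 1 (blow-down stability).} The Lane--Emden system and the inequality
\[
pq\int v^{p-1}\eta^2\,dx\le\int u^{1-q}(\Delta\eta)^2\,dx,\qquad \eta\in C^2_c(\R^d\setminus K),
\]
from Lemma~\ref{lem2} are both invariant under the rescalings \eqref{rescalings}, so a change of variables gives the same inequality for every $(u_R,v_R)$ on test functions supported in $\R^d\setminus(K/R)$. Theorem~\ref{prop-gamma0} and standard elliptic bootstrap upgrade the $L^{q+1}_{loc}\times L^{p+1}_{loc}$ compactness of $(u_R,v_R)$ to convergence in $C^2_{loc}(\R^d\setminus\{0\})$ along a subsequence; the strong maximum principle forces either $(u_\infty,v_\infty)\equiv(0,0)$ (in which case the lemma is trivial) or $u_\infty,v_\infty>0$ on $\R^d\setminus\{0\}$. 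Since $K/R\to\{0\}$, the inequality passes to the limit for every $\eta\in C^2_c(\R^d\setminus\{0\})$ and yields $pq\int v_\infty^{p-1}\eta^2\,dx\le\int u_\infty^{1-q}(\Delta\eta)^2\,dx$.

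\textbf{Step 2 (separated test function).} For a given $\varphi\in C^2(S^{d-1})$, set
\[
s:=\frac{d-2-\gamma}{2},
\]
so that $s+(d-2-s)=d-2$ and $s(d-2-s)=\frac{(d-2)^2-\gamma^2}{4}=H$. A direct computation in polar coordinates then gives, for $\eta(r\theta):=r^{-s}\chi(r)\varphi(\theta)$,
\[
\Delta\eta=r^{-s-2}\chi\,(\Delta_\theta\varphi-H\varphi)+(1+\gamma)\,r^{-s-1}\chi'\,\varphi+r^{-s}\chi''\,\varphi.
\]
I would then test Step~1 with $\chi=\chi_R$, a standard logarithmic cutoff satisfying $\chi_R\equiv 1$ on $[R^{-1},R]$, $\supp\chi_R\subset[R^{-2},R^2]$ and $|r\chi_R'|+|r^2\chi_R''|\le C/\log R$.

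\textbf{Step 3 (extraction).} The elementary scaling identities
\[
\alpha(q-1)=2-\gamma,\qquad \beta(p-1)=2+\gamma,
\]
(obtained from the definitions of $\alpha,\beta,\gamma$) give the matching exponents $d-1-\beta(p-1)-2s=d-1+\alpha(q-1)-2s-4=-1$. Consequently, after integrating $v_\infty^{p-1}\eta^2$ and the principal term of $u_\infty^{1-q}(\Delta\eta)^2$ in polar coordinates, both acquire the common radial factor
\[
I_R:=\int_0^\infty\chi_R^2\,\frac{dr}{r}\asymp\log R,
\]
multiplied respectively by $\int_{S^{d-1}}g^{p-1}\varphi^2\,d\theta$ and $\int_{S^{d-1}}f^{1-q}(\Delta_\theta\varphi-H\varphi)^2\,d\theta$. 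Every remaining contribution to $(\Delta\eta)^2$ carries at least one factor of $\chi_R'$ or $\chi_R''$; using the cutoff bounds, each such radial integral is $O(1)$ or $O(1/\log R)$, hence $o(I_R)$. Dividing by $I_R$ and letting $R\to\infty$ produces the claim.

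\textbf{Main difficulty.} The only subtle point is the choice of $s=(d-2-\gamma)/2$, which must serve three roles at once: it must make $r^{-s}\varphi$ behave under $\Delta$ as $r^{-s-2}(\Delta_\theta-H)\varphi$, and it must simultaneously make the radial densities of $v_\infty^{p-1}\eta^2$ and of the principal part of $u_\infty^{1-q}(\Delta\eta)^2$ reduce to $dr/r$. The three constraints are compatible precisely because of $\beta(p-1)=2+\gamma$ and $\alpha(q-1)=2-\gamma$, and this compatibility is the algebraic heart of the lemma; the rest of the argument is routine bookkeeping of cutoff errors.
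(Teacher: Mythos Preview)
Your proposal is correct and follows essentially the same route as the paper: pass the variational stability inequality of Lemma~\ref{lem2} to the blow-down limit, then test with a separated function $r^{-(d-2-\gamma)/2}\chi(r)\varphi(\theta)$ so that the critical radial weight $dr/r$ emerges on both sides and the cutoff errors are lower order. The only cosmetic difference is that the paper first expands $(\Delta\eta)^2$ as a quadratic in $(\Delta_\theta\varphi,\varphi)$ and then shows the ratios of radial integrals tend to $-2H$ and $H^2$, whereas you pre-factor $\Delta(r^{-s}\chi\varphi)$ to expose $\Delta_\theta\varphi-H\varphi$ directly; your treatment of the limit passage (elliptic bootstrap to $C^2_{loc}$, strong maximum principle to handle $u_\infty^{1-q}$) is in fact slightly more explicit than the paper's.
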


\begin{proof}Fix an open set $\Omega\Subset\R^d\setminus\{0\}$. Since $(u,v)$ is stable outside a compact set $K$, its rescaling $(u_R,v_R)(x)=(R^\alpha u(Rx), R^{\beta} v(Rx))$ is stable outside $K/R$ and so it is stable in an open neighborhood of $\overline\Omega$ for $R$ large enough.
%Let
%$$
%\lambda_1=\inf
%\left\{\int v_R^{1-p}(\Delta \eta)^2-
%pq\int u_R^{q-1}\eta^2\;:\;\eta\in H^1_0(\Omega)\cap H^2(\Omega)\text{ s.t. }\Vert\eta\Vert_{L^2(\Omega)}=1
%\right\}.
%$$
%By Lemma \ref{lem2}, we have $\lambda_1\ge0$. Since $(u_R,v_R)$ is bounded and bounded away from zero in the compact set $\overline\Omega$, $\lambda_1$ is achieved by some function $\phi_R$ weakly solving the linearized equation
%$$
% -\Delta \phi_R = pv_R ^{p-1}\psi_R, \quad -\Delta \psi_R = (qu_R ^{q-1}+p^{-1}\lambda_1)\phi_R\qquad\mbox{in }\; \Omega,
%$$
%where $\psi_R$ is defined through the first equation above.
%By standard elliptic regularity, $(\phi_R,\psi_R)\in C^2(\Omega)$ and the equation is satisfied in the classical sense. Note that the solution $\tilde\phi_R\in H^1_0(\Omega)\cap H^2(\Omega)$ to $-\Delta \tilde\phi_R=\vert \Delta \phi_R\vert$ also solves the minimization problem, so that we can assume $(\phi_R,\psi_R)>0$. In fact, $\lambda_1>0$. If not, inequality \eqref{stabvar} (applied to the solution $(u_R,v_R)$) would be an equality for the choice $\eta=\phi_R$. This would imply in turn that \eqref{squares} is an equality for the choice $(\eta, \varphi)=(\phi_R,\phi)$, leading to $\phi_R=c\phi$ in $\overline\Omega$, for some $c\in\R^*$. The latter equality cannot hold on $\partial\Omega$. Hence, $\lambda_1>0$ and repeating the proof of \eqref{stabvar}, we obtain the slightly finer inequality
By \eqref{stabvar},
%$$
%pq\int v_R^{p-1}\eta^2 \le \int  \left(u_R^{q-1}+\frac{\lambda_1}{pq}\right)^{-1}(\Delta \eta)^2,\qquad\eta\in C^2_c(\Omega)
%$$
$$
pq\int v_R^{p-1}\eta^2 \le \int  u_R^{1-q}(\Delta \eta)^2,\qquad\eta\in C^2_c(\Omega)
$$
%Thanks to Fatou's lemma and the dominated convergence theorem (applied to the family $(u_R,v_R)$ in the limit $R\to+\infty$), we arrive at
Passing to the limit as $R\to+\infty$,
%$$
%pq\int v_\infty^{p-1}\eta^2 \le \int  \left(u_\infty^{q-1}+\frac{\lambda_1}{pq}\right)^{-1}(\Delta \eta)^2\le\int  u_\infty^{1-q}(\Delta \eta)^2.
%$$
$$
pq\int v_\infty^{p-1}\eta^2 \le\int  u_\infty^{1-q}(\Delta \eta)^2.
$$
Since $(u_\infty,v_\infty)$ is homogeneous, we deduce that for $\eta=h(r)\varphi(\theta)$, $h\in C^2_c(0,+\infty)$, $\varphi\in C^2(S^{d-1})$,
\begin{equation}\label{s1}
pq\left(\int_{\R_+}h^2 r^{-\beta(p-1)+(d-1)}dr\right)\left(\int_{S^{d-1}} \vert g\vert^{p-1}\varphi^2\right) \le \int_{\R_+\times S^{d-1}} r^{\alpha(q-1)} f^{1-q}(\Delta \eta)^2.
\end{equation}
We have
$$
\Delta\eta = (\Delta h)\varphi+\frac{h}{r^2}\Delta_\theta\varphi
$$
and so
$$
(\Delta\eta)^2= (\Delta h)^2\varphi^2+ 2\left(\frac{h\Delta h}{r^2}\right)\varphi\Delta_\theta\varphi +\frac{h^2}{r^4}(\Delta_\theta\varphi)^2.
$$
Thus, \eqref{s1} becomes
\begin{align*}
pq\left(\int_{\R_+}h^2 r^{-\beta(p-1)+d-1}dr\right)\left(\int_{S^{d-1}} \vert g\vert^{p-1}\varphi^2\right) \le&
\left(\int_{\R_+}h^2 r^{d-5+\alpha(q-1)}dr\right)
\left(\int_{S^{d-1}} f^{1-q}(\Delta_\theta\varphi)^2
\right)+\\&
\left(\int_{\R_+}2h\Delta h\, r^{d-3+\alpha(q-1)}dr\right)
\left(\int_{S^{d-1}} f^{1-q}\varphi\Delta_\theta\varphi
\right)+\\&
\left(\int_{\R_+}(\Delta h)^2 r^{d-1+\alpha(q-1)}dr\right)
\left(\int_{S^{d-1}} f^{1-q}\varphi^2
\right).
\end{align*}
Recalling the definitions of the scaling exponents $(\alpha,\beta)$, the first integral on the left-hand side is equal to the first integral on the right-hand side in the above inequality. So,
$$
pq\left(\int_{S^{d-1}} \vert g\vert^{p-1}\varphi^2\right) \le
\left(\int_{S^{d-1}} f^{1-q}(\Delta_\theta\varphi)^2
\right)+
C
\left(\int_{S^{d-1}} f^{1-q}\varphi\Delta_\theta\varphi
\right)+
D
\left(\int_{S^{d-1}} f^{1-q}\varphi^2
\right),
$$
where
$$
C=\frac{\int_{\R_+}2h\Delta h\, r^{d-3+\alpha(q-1)}dr}{\int_{\R_+}h^2 r^{d-5+\alpha(q-1)}dr}
\quad\text{
and }
\quad
D=\frac{\int_{\R_+}(\Delta h)^2 r^{d-1+\alpha(q-1)}dr}{\int_{\R_+}h^2 r^{d-5+\alpha(q-1)}dr}.
$$
Choosing $h(r)=r^{-\frac{d-2-\gamma}2}k_n(r)\in C^2_c(\R_+)$, where $k_n$ is a standard cut-off function (vanishing near the origin and near infinity) such that $k_n(r)\to 1$ for every $r\in\R_+^*$, we find that  $C\to -2H$ and $D\to H^2$ as $n\to+\infty$, where $H$ is given by \eqref{ghlm}. Hence,
$$
pq\left(\int_{S^{d-1}} \vert g\vert^{p-1}\varphi^2\right) \le
\int_{S^{d-1}} f^{1-q}\left(
(\Delta_\theta\varphi)^2
-2H\varphi\Delta_\theta\varphi
+
H^2
\varphi^2
\right)=\int_{S^{d-1}} f^{1-q}\left(
\Delta_\theta\varphi
-
H
\varphi
\right)^2.
$$
\end{proof}
In the last two lemmas of this section, we prove that stability implies the weaker but more handy inequality  \eqref{stability interp improved}.
\begin{lem} \label{lemma4} Assume that $(u,v)\in C^2(\R^d)$ is positive and stable outside a compact set. Assume also that $(u,v)$ is asymptotically homogeneous and let $(u_\infty,v_\infty)(r\theta)=(r^{-\alpha} f(\theta), r^{-\beta} g(\theta))$ denote a blow-down limit. Then, for every $\eta\in C^1(S^{d-1})$,
$$
\sqrt{pq}\int_{S^{d-1}}f^{\frac{q-1}2}g^{\frac{p-1}2}\eta^2\le \int_{S^{d-1}}\left(\vert\nabla_\theta\eta\vert^2+H\eta^2\right),
$$
where $H$ is given by \eqref{ghlm}.
\end{lem}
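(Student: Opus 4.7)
The plan is to transpose the classical Picone$+$AM--GM derivation of the interpolated stability \eqref{stability interp} to the sphere $S^{d-1}$, replacing $-\Delta$ by the Hardy-improved Schr\"odinger operator $L:=-\Delta_\theta+H$. The central step is to produce a \emph{linearized stability pair} $(\tilde\phi,\tilde\psi)\in C^2(S^{d-1})^2$ of positive functions satisfying
\[
L\tilde\phi \ge p\,g^{p-1}\tilde\psi, \qquad L\tilde\psi \ge q\,f^{q-1}\tilde\phi \quad \text{on }S^{d-1}.
\]

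First I would record the symmetric companion of Lemma \ref{lemma3}, obtained by rerunning its proof from the second inequality of Lemma \ref{lem2}: for every $\varphi\in C^2(S^{d-1})$,
\[
pq\int_{S^{d-1}}f^{q-1}\varphi^2 \le \int_{S^{d-1}}g^{1-p}(\Delta_\theta\varphi-H\varphi)^2,
\]
with the same $H$, which is even in $\gamma$. Next I would construct $(\tilde\phi,\tilde\psi)$ by applying the Krein--Rutman theorem to the compact, strongly positive operator $T(w,z):=\bigl(p\,L^{-1}(g^{p-1}z),\,q\,L^{-1}(f^{q-1}w)\bigr)$ on $C(S^{d-1})^2$. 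Under \eqref{1.7} one has $H^2<pq\lambda\mu$ with $\lambda,\mu>0$ (supercritical regime), hence $H>0$, so $L$ is coercive, $L^{-1}$ is positivity-preserving, and $T$ is compact and irreducible. Krein--Rutman then supplies a principal eigenvalue $\mu_1>0$ together with a strictly positive eigenpair satisfying $L\tilde\phi=\mu_1^{-1}p\,g^{p-1}\tilde\psi$ and $L\tilde\psi=\mu_1^{-1}q\,f^{q-1}\tilde\phi$.

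To promote these eigenvalue equations to the desired supersolution inequalities one needs $\mu_1\le 1$. This follows by inserting $\varphi=\tilde\psi$ into Lemma \ref{lemma3} and $\varphi=\tilde\phi$ into its symmetric companion, then using the eigenvalue equations to compute $(L\varphi)^2$; the two inequalities become
\[
\mu_1^2\, p\int g^{p-1}\tilde\psi^2 \le q\int f^{q-1}\tilde\phi^2, \qquad \mu_1^2\, q\int f^{q-1}\tilde\phi^2\le p\int g^{p-1}\tilde\psi^2,
\]
and their product forces $\mu_1^4\le 1$. With the supersolutions secured, I would apply Picone's identity on $S^{d-1}$ for the operator $L$: for any positive $\xi\in C^2(S^{d-1})$ and any $\eta\in C^1(S^{d-1})$,
\[
\int_{S^{d-1}}\frac{L\xi}{\xi}\eta^2 \le \int_{S^{d-1}}\bigl(|\nabla_\theta\eta|^2+H\eta^2\bigr).
\]
Using this for $\xi=\tilde\phi$ and $\xi=\tilde\psi$, adding the two resulting bounds, and invoking AM--GM via $p\,g^{p-1}\tfrac{\tilde\psi}{\tilde\phi}+q\,f^{q-1}\tfrac{\tilde\phi}{\tilde\psi}\ge 2\sqrt{pq}\,f^{(q-1)/2}g^{(p-1)/2}$, then dividing by~$2$, yields the claim.

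The main obstacle is the Krein--Rutman step: one must justify $H>0$ (so that $L$ is invertible with positive inverse) together with irreducibility of $T$ in order to obtain a \emph{strictly} positive principal eigenpair. Should $H>0$ fail at a degenerate boundary case, an alternative construction of $(\tilde\phi,\tilde\psi)$ can be based on taking a limit of suitable rescalings of the original stability pair $(\phi,\psi)$ for $(u,v)$, normalized to survive the blow-down, followed by separation of variables using $\Delta(r^{-s}w(\theta))=r^{-s-2}\bigl(-s(d-2-s)w+\Delta_\theta w\bigr)$ at $s=(d-2\pm\gamma)/2$, for which $s(d-2-s)=H$.
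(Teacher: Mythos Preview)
Your approach is correct in spirit and ultimately reaches the same conclusion via the same Picone mechanism, but the construction of the positive supersolution pair differs from the paper's, and yours is somewhat more elaborate. The paper does not use Krein--Rutman: it directly minimizes the quadratic form
\[
\psi\longmapsto \int_{S^{d-1}} f^{1-q}\,(L\psi)^2 \;-\; pq\int_{S^{d-1}} g^{p-1}\psi^2
\]
over the $L^2$ unit sphere, obtains $\lambda_1\ge 0$ immediately from Lemma~\ref{lemma3}, takes a (sign-corrected) minimizer $\psi$, and then \emph{defines} $\varphi:=q^{-1}f^{1-q}L\psi$. This pair automatically solves $L\varphi=(p\,g^{p-1}+\lambda_1)\psi$, $L\psi=q\,f^{q-1}\varphi$ with $\lambda_1\ge0$, so the supersolution inequalities come for free and there is no need for the symmetric companion of Lemma~\ref{lemma3} nor for the $\mu_1\le 1$ computation. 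From there the paper \emph{multiplies} the two Picone bounds and applies Cauchy--Schwarz, which is equivalent to your addition plus AM--GM.

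Two small points about your write-up. First, Lemma~\ref{lemma4} does not assume \eqref{1.7}, so you should not invoke it to justify $H>0$; the correct justification is that the existence of a positive solution stable outside a compact set forces $(p,q)$ to be at least critical (by \cite{MY}), whence $\alpha+\beta\le d-2$ and $|\gamma|<\alpha+\beta\le d-2$, giving $H>0$. (The paper's proof also implicitly uses $H>0$ when solving $-\Delta_\theta\tilde\psi+H\tilde\psi=|\cdot|$ and invoking the strong maximum principle.) Second, your Krein--Rutman route requires checking irreducibility to get strict positivity of the eigenpair; the paper's variational route sidesteps this, obtaining positivity from the strong maximum principle applied to the minimizer. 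Net effect: your argument works, but the paper's minimization is shorter and avoids both the companion lemma and the spectral machinery.
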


\begin{proof}%Let $\epsilon>0$, $\Omega_\epsilon=[\vert f\vert>\epsilon]\cap[\vert g\vert<\epsilon^{-1}]$ and
Let
$$
\lambda_1=\inf\left\{
\int_{S^{d-1}}  f^{1-q}\vert\Delta_\theta\psi-H\psi\vert^2- pq\int_{S^{d-1}} \vert g\vert^{p-1}\psi^2 \; :\;
\psi\in H^2(S^{d-1})\text{ s.t. $\Vert\psi\Vert_{L^2(S^{d-1})}=1$}
\right\}.
$$
Then, $\lambda_1\ge0$ by Lemma \ref{lemma3}. Since $f$ is bounded away from zero and $g$ is bounded, $\lambda_1$ is attained by some function $\psi\in H^2(S^{d-1})$ such that $\Vert\psi\Vert_{L^2(S^{d-1})}=1$. Replacing $\psi$ by $\tilde\psi$ the weak solution to
$$
-\Delta_\theta \tilde\psi+H\tilde\psi = \vert\Delta_\theta\psi-H\psi\vert\quad\text{in $S^{d-1}$,}
$$
we may assume that $\psi\ge 0$ and $\varphi:=q^{-1} f^{1-q}(-\Delta_\theta \psi+H\psi)\ge 0$. By the strong maximum principle, $(\varphi,\psi)>0$ a.e. Also, $(\varphi,\psi)$ is a weak solution to
$$
 -\Delta_\theta \varphi +H\varphi= (p{\vert g\vert^{p-1}}+\lambda_1)\psi, \quad -\Delta_\theta \psi +H\psi= q{\vert f\vert^{q-1}}\varphi\qquad\mbox{in }\; S^{d-1}
$$
and by elliptic regularity, $(\phi,\psi)\in C^2(S^{d-1})$.
Take $\eta\in C^1(S^{d-1})$, multiply the first equation above by $\eta^2/\varphi$ and integrate by parts. Since $\lambda_1\ge0$,
$$
p\int {\vert g\vert^{p-1}}\frac{\psi}{\varphi}\eta^2 \le \int \nabla_\theta\varphi\cdot\nabla_\theta\left(\frac{\eta^2}{\varphi}\right) + H\int\eta^2 \le \int \left(\vert\nabla_\theta\eta\vert^2+H\eta^2\right).
$$
Similarly,
$$
q\int {\vert f\vert^{q-1}}\frac{\varphi}{\psi}\eta^2 \le\int \left(\vert\nabla_\theta\eta\vert^2+H\eta^2\right).
$$
Multiplying the above two inequalities and applying the Cauchy-Schwarz inequality, the result follows.
\end{proof}

Now, we are ready to provide the proof of Theorem \ref{prop-gamma}.
\begin{proof}[Proof of Theorem \ref{prop-gamma}.] Fix $\varphi\in C_c^1(\R^d)$ and $r>0$. By the previous lemma,
$$
\frac1{\sqrt{pq}} \int_{S^{d-1}}\left(\vert\nabla_\theta\varphi(r\theta)\vert^2+H\varphi(r\theta)^2\right)d\sigma(\theta)\ge\int_{S^{d-1}}f^{\frac{q-1}2}g^{\frac{p-1}2}\varphi(r\theta)^2d\sigma(\theta),
$$
Multiply by $r^{d-3}$ and integrate the above inequality in the $r$-variable. We get
$$
\frac1{\sqrt {pq}}\int_{S^{d-1}\times\R_+} \left(\vert\nabla_\theta\varphi(r\theta)\vert^2+H\varphi(r\theta)^2\right)r^{d-3}drd\sigma(\theta)
\ge
\int_{\R^d}u_\infty^{\frac{q-1}2}v_\infty^{\frac{p-1}2}\varphi^2\,dx.
$$
In addition, by Hardy's inequality, for every $h\in C^1_c(\R_+)$,
$$
\int_{\R_+}\left(\frac{\partial h}{\partial r}\right)^2r^{d-1}dr \ge \left(\frac{d-2}{2}\right)^2\int_{\R_+}\frac{h^2}{r^2}r^{d-1}dr.
$$
Applying Hardy's inequality to $h(r)=\varphi(r\theta)$ for fixed $\theta\in S^{d-1}$, recalling that $\vert \nabla\varphi\vert^2 = \left(\frac{\partial \varphi}{\partial r}\right)^2+\frac1{r^2}\vert\nabla_\theta\varphi\vert^2$ and summing the above two inequalities, \eqref{stability interp improved} follows.
\end{proof}

\section{The case $\alpha+\beta\ge d-4$}\label{sec3}
In this section, we derive integral estimates on solutions thanks to their stability properties. These estimates are the central tool to prove Theorems \ref{tbth3} and \ref{tbth4}. They also imply Theorem \ref{prop-gamma0}.
To begin with, recall the following classical lemma, which holds for all positive solutions of \eqref{1.1}, see  \cites{sz, MPO} and e.g. Lemma 2 in \cite{HHM} for a proof.
\begin{lem}\label{l.2.1}
Let $p\ge q\ge 1$ and let $(u, v)\in C^2(\R^d)$ be a positive solution  of \eqref{1.1}. Then,  there exists a constant $C > 0$ depending on $p,q,d$ only such that for any $R \geq 1$, there holds
\begin{align*}
   \int_{B_{R}}v^{p} dx\leq C R^{d-\beta p},\quad
   \int_{B_{R}}u^{q} dx \leq C R^{d-\alpha q }.
\end{align*}
\end{lem}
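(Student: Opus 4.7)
The plan is to use the classical Mitidieri--Pohozaev test-function/scaling argument. The exponents $\alpha = 2(p+1)/(pq-1)$ and $\beta = 2(q+1)/(pq-1)$ are chosen precisely so that the two desired bounds are invariant under the natural rescaling $u \mapsto R^{\alpha} u(R\,\cdot)$, $v \mapsto R^{\beta} v(R\,\cdot)$ of \eqref{1.1}; this scaling heuristic both explains why such estimates should hold and dictates the computations below.

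First I would fix a cutoff $\xi \in C_c^\infty(B_{2R})$ with $\xi \equiv 1$ on $B_R$, $0 \le \xi \le 1$, $|\nabla^k\xi| \le C_k R^{-k}$ for $k \in \{1,2\}$, and an integer $s \ge 2$. Multiplying $-\Delta u = v^p$ by $\xi^s$ and integrating by parts twice using the pointwise estimate $|\Delta(\xi^s)| \le C_s R^{-2}\xi^{s-2}$ will yield $\int v^p\xi^s\,dx \le CR^{-2}\int u\,\xi^{s-2}\,dx$, and symmetrically for the second equation. Writing $I(R):=\int_{B_R}v^p\,dx$ and $J(R):=\int_{B_R}u^q\,dx$, Hölder with conjugate exponents $(q,q/(q-1))$ (resp.\ $(p,p/(p-1))$) applied to the right hand sides, after dropping $\xi^{s-2}\le 1$, will produce the coupled system
\[I(R)\le CR^{-2+d(q-1)/q}J(2R)^{1/q},\qquad J(R)\le CR^{-2+d(p-1)/p}I(2R)^{1/p}.\]

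Substituting the second inequality into the first then gives the self-improving estimate
\[I(R)\le CR^{-\frac{2(q+1)}{q}+\frac{d(pq-1)}{pq}}I(4R)^{1/(pq)},\]
and a direct computation using the value of $\beta$ shows that the exponent on $R$ equals exactly $(d-\beta p)(pq-1)/(pq)$. Setting $f(R):=I(R)\,R^{-(d-\beta p)}$, the inequality reduces to $f(R)\le Cf(4R)^{1/(pq)}$, which iterates to
\[f(R)\le C^{\sum_{k=0}^{n-1}(pq)^{-k}}\,f(4^n R)^{1/(pq)^n}.\]

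The hard part will be to make this iteration converge, that is, to ensure $f(4^n R)^{1/(pq)^n}$ stays bounded as $n\to\infty$. I would handle it by first establishing a rough polynomial growth bound on $I$ via the same testing procedure with larger cutoff powers combined with local elliptic regularity (using that $u,v\in C^2(\R^d)$ are locally bounded), as in \cites{sz, MPO}; any polynomial bound is enough, since $1/(pq)^n$ decays geometrically. Once that crude bound is in hand, the iteration yields $f(R)\le C$, hence $I(R)\le CR^{d-\beta p}$, and the estimate $J(R)\le CR^{d-\alpha q}$ follows from the symmetric computation.
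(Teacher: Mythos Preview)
The paper does not give its own proof of this lemma; it simply cites \cites{sz, MPO} and Lemma~2 in \cite{HHM}. Your outline is precisely the Mitidieri--Pohozaev test-function/feedback argument used in those references, and the exponent algebra leading to $f(R)\le Cf(4R)^{1/(pq)}$ is correct.

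There is, however, an avoidable detour that creates the ``hard part'' you flag. By discarding $\xi^{s-2}\le 1$ before applying H\"older, you are forced to relate $I(R)$ to $J(2R)$ and then to $I(4R)$, producing an iteration that needs an a priori polynomial bound on $I$ to terminate. That bound is not supplied by local elliptic regularity (which controls $u,v$ only on compact sets, not their growth at infinity), so as written this step is incomplete. The standard cure, and what the cited references actually do, is to keep the weights throughout: for $q>1$, take $s\ge 2q/(q-1)$ so that $(s-2)q\ge s$ and $(s-2)p\ge s$; then $\xi^{(s-2)q}\le\xi^s$ and $\xi^{(s-2)p}\le\xi^s$ (since $0\le\xi\le1$), and H\"older gives
\[
\int v^p\xi^s \le CR^{-2+d(1-1/q)}\Bigl(\int u^q\xi^s\Bigr)^{1/q},\qquad
\int u^q\xi^s \le CR^{-2+d(1-1/p)}\Bigl(\int v^p\xi^s\Bigr)^{1/p}.
\]
Substituting one into the other yields $\bigl(\int v^p\xi^s\bigr)^{1-1/(pq)}\le CR^{(d-\beta p)(pq-1)/(pq)}$, hence $\int_{B_R}v^p\le CR^{d-\beta p}$ in a single algebraic step, with no iteration and no growth hypothesis. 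For $q=1$ one integrates by parts once more via $-\Delta v=u$ before applying H\"older with exponent $p$. With this modification your proof is complete and matches the literature.
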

We will also use the following comparison inequality, due to Souplet \cite{souplet} for bounded solutions. See e.g. Lemma 2.7 in Q. H. Phan \cite{Pha} for a proof when the solution is not assumed to be bounded.
\begin{lem}\label{Soup} Let $p\ge q\ge 1$ be such that $pq>1$. Then, any %bounded
positive solution $(u,v)\in C^2(\R^d)$ of \eqref{1.1}
verifies
\begin{align}
\label{estS}
v^{p + 1} \leq \frac{p+1}{q+1}u^{q+1} \;\; \mbox{in}\;\;\mathbb{R}^d.
\end{align}
\end{lem}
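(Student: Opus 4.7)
The plan is to establish the pointwise comparison by introducing the auxiliary function
$$
\Phi := \frac{v^{p+1}}{p+1} - \frac{u^{q+1}}{q+1}
$$
and showing $\Phi\le 0$ throughout $\R^d$. A direct computation from \eqref{1.1} yields
$$
\Delta\Phi = p\,v^{p-1}|\nabla v|^2 - q\,u^{q-1}|\nabla u|^2,
$$
since the cross-terms $\mp u^q v^p$ cancel exactly. This is not sign-definite, so the maximum principle cannot be applied to $\Phi$ directly, and I would follow Souplet \cite{souplet} and Phan \cite{Pha} in a two-step strategy.

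The first step is a pointwise maximum principle analysis at any putative positive supremum of $\Phi$. There, the condition $\nabla\Phi = 0$ forces $v^p\nabla v = u^q\nabla u$, hence $|\nabla u|^2 = (v^{2p}/u^{2q})|\nabla v|^2$; inserting this into $\Delta\Phi\le 0$ produces an algebraic constraint of the form $p\,u^{q+1}\le q\,v^{p+1}$ at that point. Combined with $\Phi > 0$, i.e.\ $v^{p+1} > \frac{p+1}{q+1}u^{q+1}$, and the hypothesis $p\ge q$, this should eventually rule out a positive interior maximum; for bounded solutions one closes the argument by a translation/compactness procedure that replaces a non-attained supremum by an attained one at a limiting solution of \eqref{1.1}. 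This is the content of Souplet's original argument.

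For unbounded $(u,v)$, compactness under translations is unavailable, and the plan is to replace it by an integral argument in the spirit of Phan. I would derive a differential inequality for $\Phi^+$, test it against $(\Phi^+)^{k-1}\zeta_R^{2k}$ for a standard cut-off $\zeta_R$ supported in $B_{2R}$, and integrate by parts. To dominate the bad term $p v^{p-1}|\nabla v|^2$ in $\Delta\Phi$ by $q u^{q-1}|\nabla u|^2$ modulo lower-order errors, I would use a Bernstein-type gradient estimate of the form $|\nabla v|^2\le C v u^q + C R^{-2}v^2$ (and the analogue for $u$), obtained in the usual way by applying the maximum principle to a suitably weighted combination of $|\nabla v|^2$ and $v^2$. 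The remainder is then controlled by the polynomial bounds $\int_{B_{2R}} v^p\lesssim R^{d-\beta p}$ and $\int_{B_{2R}} u^q\lesssim R^{d-\alpha q}$ from Lemma \ref{l.2.1}, and letting $R\to\infty$ forces $\Phi^+\equiv 0$.

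The main obstacle is the last step: choosing the exponent $k$, carrying out the integration by parts without losing integrability, and balancing the $R^{-2}$ factor produced by $|\nabla\zeta_R|^2$ against the growth of $\int_{B_{2R}} v^p$ and $\int_{B_{2R}} u^q$. The hypothesis $p\ge q$ enters precisely at this balancing step and reflects the asymmetry of \eqref{estS}: without it, the remainder terms fail to decay and the argument breaks down. The technical bookkeeping of this balance constitutes the bulk of the work in \cite{souplet} and \cite{Pha}.
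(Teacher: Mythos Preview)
The paper does not actually prove this lemma: it is stated with attribution to Souplet \cite{souplet} (bounded case) and to Lemma~2.7 of Phan \cite{Pha} (general case), with no argument supplied. Your proposal defers to the same two references, so in that sense it matches the paper exactly; if anything you go further, since you correctly identify the auxiliary function $\Phi=\frac{v^{p+1}}{p+1}-\frac{u^{q+1}}{q+1}$ and correctly compute $\Delta\Phi=p\,v^{p-1}|\nabla v|^2-q\,u^{q-1}|\nabla u|^2$.

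One caveat about your sketch, so that you do not try to carry it out as written. In Step~1, the algebraic consequence you extract at a putative interior maximum, namely $p\,u^{q+1}\le q\,v^{p+1}$ (and only when $\nabla v\neq 0$ there), is \emph{compatible} with $\Phi>0$ rather than in contradiction with it, since $\tfrac{p}{q}\ge\tfrac{p+1}{q+1}$ for $p\ge q$; Souplet's actual argument is organized differently. Likewise, Phan's extension to unbounded solutions does not proceed via the Bernstein-type gradient bound you propose. None of this affects the comparison with the paper, which simply cites the result, but if you intend to reconstruct the proof you should consult the references directly rather than follow the mechanism outlined here.
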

%\begin{rem} The above lemma holds for bounded solutions. Thanks to the doubling lemma introduced in \cite{PQS}, this assumption is not restrictive and it suffices to prove Theorems \ref{tbth} and \ref{tbth2} for bounded solutions.
%\end{rem}
The next lemma pertains to stable solutions and uses ideas from \cites{hhy, HHM}.
\begin{lem}
\label{Newl}Assume that $(u,v)\in C^2(\Omega)$ is a positive stable solution of \eqref{1.1} in an open set $\Omega\subset\R^d$. Let $a\geq\frac{q+1}{2}$ and $b=\frac{p+1}{q+1}a.$ Assume that $AB>1,$ where $A=\sqrt{pq}\frac{(2a-1)}{a^2}$ and $B=\sqrt{pq}\frac{(2b-1)}{b^2}$. Then, there exists $C >0$ depending on $p,q,d,a$ only such that
\begin{equation*}
\int_{\Omega}v^{p}u^{2a-1}\eta^2dx  \leq C\int_{\Omega}u^{2a}\Big[|\nabla\eta|^2+\vert \Delta(\eta^2) \vert\Big] dx,\quad \forall\, \eta \in C_c^2(\Omega).
\end{equation*}
\end{lem}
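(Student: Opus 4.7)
The strategy combines a differential identity obtained by testing the equation against $u^{2a-1}\eta^2$ with the interpolated stability inequality \eqref{stability interp} applied to $\varphi = u^a\eta$, and closes the resulting bootstrap using H\"older's inequality, Souplet's inequality (Lemma \ref{Soup}), and the hypothesis $AB>1$.

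First, I would multiply $-\Delta u = v^p$ by $u^{2a-1}\eta^2$ and integrate by parts. Using $u^{2a-2}|\nabla u|^2 = a^{-2}|\nabla u^a|^2$ and integrating the cross term against $\nabla\eta$ once more (as $u^{2a-1}\nabla u = (2a)^{-1}\nabla(u^{2a})$), this yields
\[
I_1 := \int v^p u^{2a-1}\eta^2 = \frac{2a-1}{a^2}\int|\nabla u^a|^2\eta^2 - \frac{1}{2a}\int u^{2a}\Delta(\eta^2),
\]
with the analogous identity for $I_2 := \int u^q v^{2b-1}\eta^2$.

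Next, I would apply \eqref{stability interp} with $\varphi = u^a\eta$, expand $|\nabla(u^a\eta)|^2$, and use the previous identity to eliminate the gradient term. This produces
\[
AP \leq I_1 + C\int u^{2a}\bigl(|\nabla\eta|^2 + |\Delta(\eta^2)|\bigr),
\]
where $P := \int u^{2a+(q-1)/2}v^{(p-1)/2}\eta^2$ and $A$ is as in the statement. The symmetric argument with $\varphi = v^b\eta$ gives $BQ \leq I_2 + C\int v^{2b}(|\nabla\eta|^2 + |\Delta(\eta^2)|)$ with $Q := \int u^{(q-1)/2}v^{2b+(p-1)/2}\eta^2$. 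Souplet's inequality in the form $v^{2b} = (v^{p+1})^{2a/(q+1)} \leq C u^{2a}$ (using $b=(p+1)a/(q+1)$) converts this to $BQ \leq I_2 + C\,R$, where $R := \int u^{2a}(|\nabla\eta|^2 + |\Delta(\eta^2)|)$.

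Setting $\alpha := (4a-q-1)/(4a) \in [\tfrac12,1)$, the factorization $v^p u^{2a-1} = (u^{2a+(q-1)/2}v^{(p-1)/2})^\alpha (u^{(q-1)/2}v^{2b+(p-1)/2})^{1-\alpha}$ combined with H\"older's inequality gives $I_1 \leq P^\alpha Q^{1-\alpha}$; symmetrically $I_2 \leq P^{1-\alpha}Q^\alpha$; multiplying, $I_1 I_2 \leq PQ$. Multiplying the two stability bounds,
\[
AB\cdot I_1 I_2 \leq AB\cdot PQ \leq (I_1 + CR)(I_2 + CR),
\]
whence $(AB-1)I_1 I_2 \leq CR(I_1+I_2) + CR^2$. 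The hard part is the final absorption: Souplet's pointwise comparison gives $I_2 \leq C I_1$, while the H\"older estimate in the regime $I_1,I_2\gg R$ gives $I_1 \leq C I_2$, so $I_1\sim I_2$ there, reducing the above to the quadratic $(AB-1)I_1^2 \lesssim RI_1 + R^2$, which yields $I_1 \leq CR$ by the quadratic formula; in the complementary regime $\min(I_1,I_2)\lesssim R$, the conclusion follows directly. The role of the hypothesis $AB>1$ is precisely to make the leading coefficient of that quadratic strictly positive.
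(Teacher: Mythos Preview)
Your proposal is correct and shares its opening with the paper's proof: both test the equation with $u^{2a-1}\eta^2$ (resp.\ $v^{2b-1}\eta^2$), apply the interpolated stability inequality \eqref{stability interp} with $\varphi=u^a\eta$ and $\varphi=v^b\eta$, and use Souplet's inequality to replace $v^{2b}$ by $u^{2a}$ in the error terms. The divergence is in how the two resulting inequalities are combined. You \emph{multiply} the bounds $AP\le I_1+CR$ and $BQ\le I_2+CR$, use H\"older to get $I_1I_2\le PQ$, and land on a quadratic inequality $(AB-1)I_1I_2\le CR(I_1+I_2)+CR^2$, which you close via a case split using Souplet's $I_2\le CI_1$ and the bootstrap $I_1\le CQ\le CI_2$ in the regime $I_1,I_2\gg R$. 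The paper instead takes the \emph{linear combination} $A^{1/m}P+Q$ (with $m=\tfrac{q+1}{4a}$), applies Young's inequality in the form $I_1\le(1-m)A^{1/m}P+mQ$ and $I_2\le mB^{-1/m}P+(1-m)Q$ to the cross terms, and after absorption obtains directly the linear estimate $m\bigl((AB)^{1/m}-1\bigr)P\le CR$; a final application of Souplet ($u^{\frac{q-1}2}v^{\frac{p-1}2}u^{2a}\ge Cu^{2a-1}v^p$) then converts $P$ into $I_1$. The paper's route is cleaner (no case analysis, no quadratic formula), while your multiplicative route has the virtue that the hypothesis $AB>1$ appears transparently as the sign of a leading coefficient; both deliver the same bound with a constant depending only on $p,q,d,a$.
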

\begin{proof} Take $\eta \in C_c^2(\Omega).$  Let $(u,v)$ be a stable solution of
\eqref{1.1} in $\Omega$ and $a>\frac12$. Integrating by parts,
\begin{align}
\label{3.1}
\begin{split}
\int_{\Omega}|\nabla
u^{a}|^2\eta^2 dx & = a^2\int_{\Omega}u^{2a-2}|\nabla u|^2\eta^2 dx\\ & = \frac{a^2}{2a-1}\int_{\Omega}\eta^2\nabla(u^{2a-1})\nabla
u\,dx  \\
&= \frac{a^2}{2a-1}\int_{\Omega}u^{2a-1}v^{p}\eta^2dx  + \frac{a}{2(2a-1)}\int_{\Omega}u^{2a}\Delta(\eta^2)dx ,
\end{split}
\end{align}
and
\begin{align}
\label{3.2}
  2a\int_{\Omega}u^{2a-1}\eta\nabla u\nabla\eta dx  = \frac{1}{2}\int_{\Omega}\nabla(u^{2a})\nabla(\eta^2)dx  = -\frac{1}{2}\int_{\Omega}u^{2a}\D(\eta^2)dx .
\end{align}
Take $\varphi =u^{a}\eta$ in inequality \eqref{stability interp}. Using \eqref{3.1}-\eqref{3.2}, we obtain
\begin{align*}
 \sqrt{pq}\int_{\Omega}u^{\frac{q-1}{2}}v^{\frac{p-1}{2}}u^{2a}\eta^2dx \leq \; \int_{\Omega}|\nabla\varphi|^2dx\leq  \;\frac{a^2}{2a-1}\int_{\Omega}u^{2a-1}v^p\eta^2dx +C\int_{\Omega}u^{2a}\Big[|\nabla\eta|^2+\vert \Delta(\eta^2) \vert\Big]dx.
\end{align*}
So,
\begin{align}\label{3.3}
\int_{\Omega}u^{\frac{q-1}{2}}v^{\frac{p-1}{2}}u^{2a}\eta^2dx \leq
A^{-1}\int_{\Omega}u^{2a-1}v^p\eta^2dx
 +C\int_{\Omega}u^{2a}\Big[|\nabla\eta|^2+\vert \Delta(\eta^2) \vert\Big]dx.
\end{align}
%Choose now
%$\phi(x)= \varphi_0(R^{-1}x)$ where $\varphi_0 \in C_c^\infty(\Omega)$ such that $\varphi_0
%\equiv 1$ in $B_1$, there holds then
% \begin{equation}\label{3.3}
%\int_{\mathbb{R}^d}u^{\frac{\theta-1}{2}}v^{\frac{p-1}{2}}u^{q+1}\phi^2dx\leq
%\frac{1}{a_1}
%\int_{\mathbb{R}^d}u^qv^p\phi^2dx+\frac{C}{R^2}\int_{B_{2R}}u^{q+1}dx.
%\end{equation}
Similarly, applying inequality \eqref{stability interp} with $\varphi = v^{b}\eta$, $b
\ge 1$, we obtain
\begin{equation}\label{3.4}
\int_{\Omega}u^{\frac{q-1}{2}}v^{\frac{p-1}{2}}v^{2b}\eta^2dx\leq
B^{-1} \int_{\Omega}u^qv^{2b-1}\eta^2dx+C\int_{\Omega}v^{2b}\Big[|\nabla\eta|^2+\vert \Delta(\eta^2) \vert\Big]dx.
\end{equation}
Let $I_1$ denote the left-hand side of \eqref{3.3} (resp. $I_2$ that of \eqref{3.4}). Then,
 \begin{align}
 \label{3.5}
  \begin{split} {A}^\frac{4a}{q+1} I_1+ I_2
 =  \; {A}^\frac{4a}{q+1}\int_{\Omega}u^{\frac{q-1}{2}}v^{\frac{p-1}{2}}u^{2a}\eta^2dx+
  \int_{\Omega}u^{\frac{q-1}{2}}v^{\frac{p-1}{2}}v^{2b}\eta^2dx\\
  \leq \; A^{\frac{4a}{q+1}-1}
\int_{\Omega}u^{2a-1}v^p\eta^2dx+B^{-1}\int_{\Omega}u^qv^{2b-1}\eta^2dx\\
+C\int_{\Omega}(u^{2a}+v^{2b})\Big[|\nabla\eta|^2+\vert \Delta(\eta^2) \vert\Big]dx.
  \end{split}
 \end{align}
Fix now
\begin{equation}\label{3.6}
 b=\frac{p+1}{q+1}a.
\end{equation}
Assume that $a\geq \frac{q+1}{2}$ so that $m=\frac{q+1}{4a}=\frac{p+1}{4b}\in (0,1).$ By Young's inequality, there holds
\begin{align*}
 B^{-1}\int_{\Omega}u^qv^{2b-1}\eta^2dx
= & \; B^{-1}\int_{\Omega}u^{\frac{q-1}{2}}v^{\frac{p-1}{2}}u^{2am}v^{2b(1-m)}\eta^2 dx \\
%\end{align*}\begin{align*}
\leq & \; mB^{-\frac{1}{m}}I_1+(1-m) I_2
\end{align*}
and similarly we have
 \begin{equation}\nonumber
  A^{\frac{4a}{q+1}-1}\int_{\Omega}u^{2a-1}v^p\eta^2dx \leq (1-m)A^{\frac{1}{m}} I_1+mI_2.
   \end{equation}
   Combining the above two estimates with \eqref{3.5}, we derive that
\begin{equation*}
{A}^{\frac{1}{m}}I_1\leq
\left(mB^{-\frac{1}{m}}+(1-m)A^{\frac{1}{m}}\right)I_1
+C\int_{\Omega}(u^{2a}+v^{2b})\Big[|\nabla\eta|^2+\vert \Delta(\eta^2) \vert\Big]dx,
\end{equation*}
hence
\begin{equation*}
m\left((AB)^{\frac{1}{m}}-1\right)
I_1\leq C\int_{\Omega}(u^{2a}+v^{2b})\Big[|\nabla\eta|^2+\vert \Delta(\eta^2) \vert\Big]dx.
\end{equation*}
Thus, if $AB > 1$, there holds
\begin{equation*}
    \int_{\Omega}u^{\frac{q-1}{2}}v^{\frac{p-1}{2}}u^{2a}\eta^2dx  \leq C\int_{\Omega}(u^{2a}+v^{2b})\Big[|\nabla\eta|^2+\vert \Delta(\eta^2) \vert\Big]dx.
\end{equation*}
Using \eqref{3.6} and \eqref{estS}, there holds $u^{2a} \geq Cv^{2b}$ and
$u^{\frac{q-1}{2}}v^{\frac{p-1}{2}}u^{2a}\geq C u^{2a-1} v^p$.
So if $AB> 1$ and $a > \frac{q+1}{4}$,
\begin{equation*}
\int_{\Omega}v^{p}u^{2a-1}\eta^2dx  \leq C\int_{\Omega}u^{2a}\Big[|\nabla\eta|^2+\vert \Delta(\eta^2) \vert\Big] dx,
\end{equation*}
the proof is completed. \end{proof}
Using the above lemma, we obtain the following estimate.
\begin{lem}\label{lmain1}
Assume that $(u,v)\in C^2(B_2)$ is a positive solution of \eqref{1.1} which is stable in $B_2$. Let $\theta\in\left[1,\frac{d}{d-2}\right)$.Then for any $a\ge\frac{q+1}{2}$ satisfying $AB>1,$ there exists $C >0$ depending on $p,q,d,\theta, a$ only such that
\begin{align}\label{3.10}
 \int_{ B_1}u^{2a\theta}dx\leq C.
 \end{align}
\end{lem}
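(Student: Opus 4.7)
The plan is a Caccioppoli-type iteration built from Lemma~\ref{Newl}, the Sobolev embedding, and the a priori $L^q$ estimate of Lemma~\ref{l.2.1}.

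First, I would combine the identity \eqref{3.1} with Lemma~\ref{Newl} (applicable because $a\ge (q+1)/2$ and $AB>1$) to derive, for every $\eta\in C^2_c(B_2)$,
$$
\int|\nabla u^{a}|^{2}\eta^{2}\,dx \le C\int u^{2a}\bigl(|\nabla\eta|^{2}+|\Delta(\eta^{2})|\bigr)\,dx.
$$
Applying the Sobolev inequality to $\varphi=u^{a}\eta$ and using $|\nabla\varphi|^{2}\le 2|\nabla u^{a}|^{2}\eta^{2}+2u^{2a}|\nabla\eta|^{2}$ then yields, with $2^{*}=\tfrac{2d}{d-2}$,
$$
\Bigl(\int (u^{a}\eta)^{2^{*}}\,dx\Bigr)^{2/2^{*}} \le C\int u^{2a}\bigl(|\nabla\eta|^{2}+|\Delta(\eta^{2})|\bigr)\,dx.
$$
Specializing $\eta$ to a smooth cutoff between concentric balls $B_r\Subset B_R\Subset B_2$ with $|\nabla\eta|^{2}+|\Delta(\eta^{2})|\le C(R-r)^{-2}$ converts this into the Caccioppoli bound
$$
\Phi(r)^{2/2^{*}}\le \frac{C}{(R-r)^{2}}\int_{B_R}u^{2a}\,dx,\qquad \Phi(r):=\int_{B_r}u^{a\cdot 2^{*}}\,dx.
$$

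Second, since $a\ge (q+1)/2$ forces $q<2a<a\cdot 2^{*}$, I would interpolate $L^{2a}$ between $L^q$ and $L^{a\cdot 2^{*}}$: pick $\lambda\in(0,1)$ with $\frac{1}{2a}=\frac{\lambda}{q}+\frac{1-\lambda}{a\cdot 2^{*}}$, so that H\"older's inequality gives
$$
\int_{B_R}u^{2a}\,dx\le \Bigl(\int_{B_R}u^{q}\,dx\Bigr)^{2a\lambda/q}\Phi(R)^{(1-\lambda)(d-2)/d}.
$$
Bounding $\int_{B_R}u^{q}\le C$ via (the local version of) Lemma~\ref{l.2.1} and substituting back, one obtains
$$
\Phi(r)\le \frac{C}{(R-r)^{2^{*}}}\Phi(R)^{1-\lambda},
$$
an inequality with sub-linear exponent $1-\lambda<1$ on the right.

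A standard geometric iteration on $r_{k}=1+2^{-k}$, starting from the a priori finite value $\Phi(3/2)$ guaranteed by $u\in C^{2}(B_2)$, then forces the universal bound $\Phi(1)\le C$ depending only on $p,q,d,a$. Since $2a\theta<a\cdot 2^{*}$ whenever $\theta<d/(d-2)$, H\"older's inequality on $B_1$ delivers \eqref{3.10}.

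I expect the main obstacle to be closing the iteration: it is essential that the exponent $1-\lambda$ be strictly less than $1$, a property that ultimately stems from the strict inequality $2a>q$ provided by the hypothesis $a\ge(q+1)/2$. A secondary, purely technical, issue is the local use of Lemma~\ref{l.2.1} on subballs of $B_2$, which is handled by a routine adaptation of the classical test-function argument.
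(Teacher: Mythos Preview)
Your approach is sound and genuinely different from the paper's. The paper applies the $L^1$ elliptic embedding $\|w\|_{L^\theta}\le C\|\Delta w\|_{L^1}$ (valid for $\theta<d/(d-2)$) to $w=u^{2a}\eta^{2}$, then uses a \emph{single} high-power cutoff $\eta=\varphi^{m}$: interpolating $u^{2a}$ between $u^{q}$ and $u^{2a\theta}$ and feeding back $J_1^{1/\theta}\le CJ_2$ yields the self-improving inequality $J_2\le C'J_2^{\theta(1-\lambda)}$ with $\theta(1-\lambda)<1$, which closes in one step with no iteration over radii. You instead pass through the Sobolev inequality and iterate on concentric balls; a pleasant by-product is that you actually reach the endpoint $\int_{B_1}u^{a\cdot 2^{*}}\le C$, slightly stronger than the stated estimate for $\theta<d/(d-2)$.

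One genuine caveat: the iteration with your radii $r_{k}=1+2^{-k}$ (gaps shrinking towards $r=1$) does \emph{not} deliver a universal bound. Substituting repeatedly gives
\[
\Phi(r_n)\le C^{\sum_{j}(1-\lambda)^{j}}\prod_{j=1}^{n}(r_{j-1}-r_j)^{-2^{*}(1-\lambda)^{n-j}}\,\Phi(3/2)^{(1-\lambda)^{n}},
\]
and with $r_{j-1}-r_j=2^{-j}$ the logarithm of the product is $2^{*}\log2\sum_{i=0}^{n-1}(n{-}i)(1-\lambda)^{i}\sim n/\lambda$, which diverges, so the decay of the exponent on $\Phi(3/2)$ is swamped. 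The standard fix is to reverse the geometry: take $r_k$ \emph{increasing} from $1$ to $3/2$ with $r_{k+1}-r_k\sim 2^{-k}$ (gaps shrinking towards the outer radius). Then
\[
\Phi(1)\le C^{1/\lambda}\prod_{k\ge0}(r_{k+1}-r_k)^{-2^{*}(1-\lambda)^{k}}\cdot\Phi(3/2)^{(1-\lambda)^{n}},
\]
the product is controlled by the convergent series $\sum_{k}k(1-\lambda)^{k}$, and $\Phi(3/2)^{(1-\lambda)^{n}}\to1$ since $\Phi(3/2)<\infty$ by $u\in C^{2}(B_2)$. This yields the universal bound you claim; apart from this reorientation, your scheme works as written.
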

\begin{proof}[Proof of Lemma \ref{lmain1}]
 Let $(u,v)$ be a solution of \eqref{1.1} which is stable in $B_2$ and take $\theta\in\left[1,\frac{d}{d-2}\right)$. Take $\eta \in C_c^\infty(B_2)$. By $L^1$ elliptic regularity theory, there exists $C>0$ such that
\begin{align*}
  \left(\int_{B_2}u^{2a\theta}\eta^{2\theta}\right)^{\frac{1}{\theta}}\leq C\int_{B_2}|\Delta (u^{2a}\eta^2)|.
\end{align*}
Expanding $\Delta (u^{2a}\eta^2)$ in the right-hand side of the above inequality, we find
\begin{align*}
 |\Delta (u^{2a}\eta^2)|\leq (2a)  u^{2a-1}v^p \eta^2 +2a(2a-1)u^{2a-2}|\nabla u|^2\eta^2+ u^{2a} |\Delta(\eta^2)|+4au^{2a-1}|\nabla u||\nabla \eta^2|.
\end{align*}
Then using Young's inequality, there holds
\begin{align}\label{3.7}
  \left(\int_{B_2}u^{2a\theta }\eta^{2\theta}\right)^{\frac{1}{\theta}}\leq C\left[\int_{B_2} u^{2a-1}v^p \eta^2 +\int_{B_2}u^{2a-2}|\nabla u|^2\eta^2+\int_{B_2} u^{2a} \left(|\Delta(\eta^2)|+|\nabla \eta|^2\right)\right].
\end{align}
Multiplying $-\Delta u= v^p$ by $u^{2a-1}\eta^2,$ integrating by parts and applying Young's inequality, we get
\begin{align}\label{3.8}
  \int_{B_2} u^{2a-2}|\nabla u|^2 \eta^2dx\leq  \frac{1}{2a-1}\int_{B_2}  u^{2a-1}v^p \eta^2dx + C\int_{B_2} u^{2a} |\Delta(\eta^2)|dx.
\end{align}
Combining the last two estimates, we obtain
\begin{align*}
  \left(\int_{B_2}u^{2a\theta}\eta^{2\theta}\right)^{\frac{1}{\theta}}\leq C\left[\int_{B_2} u^{2a-1}v^p \eta^2 +\int_{B_2} u^{2a} \left(|\Delta(\eta^2)|+|\nabla \eta|^2\right)\right].
\end{align*}
By Lemma \ref{Newl} with $\Omega=B_2$, we conclude that
\begin{align*}
 \left(\int_{B_2}u^{2a\theta}\eta^{2\theta}\right)^{\frac{1}{\theta}}\leq C\int_{B_2} u^{2a} \left(|\Delta(\eta^2)|+|\nabla \eta|^2\right).
\end{align*}
Take $\varphi$ a cut-off function in $C_c^\infty(B_2)$ verifying $0 \leq \varphi \leq 1$, $\varphi=1$ in $B_1$. Letting $\eta = \varphi^m$, $m\ge 1$, we arrive at
 \begin{equation}\label{3.9}
 \left(\int_{B_2}u^{2a\theta }\varphi^{2\theta m}dx\right)^{\frac{1}{\theta}} \leq C\int_{B_2} u^{2a}\varphi^{2m-2}dx.
 \end{equation}
Let
 \begin{align*}
J_1 := \int_{B_2}u^{2a\theta}\varphi^{2\theta m}dx , \quad  J_2 := \int_{B_2} u^{2a}\varphi^{2m-2}dx.
 \end{align*}
Since $a\ge\frac{q+1}{2},$ we have $q < 2a < 2a\theta$. A direct calculation yields
 \begin{align*}
2a= q\lambda + 2a\theta(1 - \lambda) \quad \mbox{with } \lambda = \frac{2a(\theta-1)}{2a\theta-q} \in (0, 1).
 \end{align*}
Take $m$ large such that $m(1-\theta(1-\lambda))=m\frac{q(\theta-1)}{2a\theta-q}$. %m\frac{(2a-1)(\theta-1)}{2a\theta-q} > 1.$
By H\"older's inequality  and \eqref{3.9}, we get
\begin{eqnarray*}
  J_2 &\leq& J_1^{1 - \lambda} \left(\int_{B_2}u^q\varphi^{\frac{2m(1-\theta(1-\lambda))-2}{\lambda}}dx\right)^\lambda \\
& \leq& \left(CJ_2\right)^{\theta(1 - \lambda)} \left(\int_{B_2}u^qdx\right)^\lambda \\
 & \leq& C'J_2^{\theta(1 - \lambda)},
 \end{eqnarray*}
where in the last inequality we have applied Lemma \ref{l.2.1}.  This implies, using again \eqref{3.9},
\begin{align*}
J_1\leq C^\theta (J_2)^\theta \leq  C.
 \end{align*}
Since $\varphi=1$ in $B_1$, \eqref{3.10} follows.
 \end{proof}
Thanks to the energy estimate \eqref{3.10}, Theorem \ref{prop-gamma0} easily follows.
\begin{proof}[Proof of Theorem \ref{prop-gamma0}]
Indeed, applying \eqref{3.10} with $a=\frac{q+1}2$ and some $\theta\in\left(1,\frac{d}{d-2}\right)$, we deduce that the family of rescaled functions $(u_R)_{R\ge1}$ is bounded in $L^{\tilde q+1}_{loc}(\R^d)$ for some $\tilde q>q$. By the comparison inequality \eqref{estS}, we deduce that  so must be $(v_R)_{R\ge1}$ in $L^{\tilde p+1}_{loc}(\R^d)$ for some $\tilde p>p$. Thanks to the equation \eqref{1.1}, it also follows that $(u_R,v_R)_{R\ge1}$ is bounded in $W^{1,1}_{loc}(\R^d)$. This fact and the $L^{\tilde q+1}_{loc}(\R^d)\times L^{\tilde p+1}_{loc}(\R^d)$ bound imply compactness of $(u_R,v_R)_{R\ge1}$ in $L^{q+1}_{loc}(\R^d)\times L^{p+1}_{loc}(\R^d)$ as claimed.
\end{proof}
The energy estimate \eqref{3.10} also implies the following Liouville type result for positive stable solutions of \eqref{1.1}.

\begin{thm}
\label{main1} Suppose $p\geq q > 1.$
Then, \eqref{1.1} has no stable classical solution if $d< 2+2x_0,$ where $x_0$ is the largest root of the polynomial $H$ given by \eqref{polyH}.
\end{thm}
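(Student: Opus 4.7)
The plan is to combine the localized integral estimate of Lemma \ref{lmain1} with the scaling invariance of \eqref{1.1} to translate the algebraic condition $AB>1$ into a negativity statement about the polynomial $H$ at a judicious point.

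First, I would note that if $(u,v)\in C^2(\R^d)$ is a stable classical solution of \eqref{1.1}, then so is the rescaling $(u_R,v_R)$ defined in \eqref{rescalings} for every $R>0$. Applying Lemma \ref{lmain1} to $(u_R,v_R)$ and undoing the scaling yields
$$
\int_{B_R} u^{2a\theta}\,dx \;\le\; C\, R^{\,d-2a\theta\alpha}
$$
for every admissible $a\ge(q+1)/2$ and $\theta\in [1,d/(d-2))$ with $AB>1$. If such parameters can be chosen so that $d-2a\theta\alpha<0$, sending $R\to +\infty$ forces $u\equiv 0$, after which $-\Delta v=u^q=0$ together with $v\ge 0$ forces $v\equiv 0$.

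The algebraic heart of the argument is the substitution $a=x/\alpha$, $b=x/\beta$, which is consistent with the required relation $b=\tfrac{p+1}{q+1}a$ because $\alpha/\beta=(p+1)/(q+1)$. A direct computation then gives
$$
AB \;=\; pq\,\alpha\beta\,\frac{(2x-\alpha)(2x-\beta)}{x^4},
$$
so $AB>1$ is precisely equivalent to $H(x)<0$. Since $H(x)\to+\infty$ as $x\to+\infty$ and $H(0)=-pq(\alpha\beta)^2<0$, the polynomial has a largest real root $x_0$, and $H$ is negative on a left neighborhood of $x_0$. The admissibility requirement $a\ge(q+1)/2$ translates to $x\ge \alpha(q+1)/2$, and a direct check shows $AB>1$ already at $x=\alpha(q+1)/2$ whenever $p\ge q>1$ (the inequality reducing to $3pq-p-q-1>0$), so $x_0>\alpha(q+1)/2$ and this constraint is benign. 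Letting $\theta\uparrow d/(d-2)$, the decay condition $d<2a\theta\alpha$ collapses to $x>(d-2)/2$. Under the hypothesis $d<2+2x_0$, one may therefore choose $x\in(\max\{(d-2)/2,\alpha(q+1)/2\},\,x_0)$, producing a pair $(a,\theta)$ for which the integral bound yields $u\equiv 0$.

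The main obstacle I anticipate is purely algebraic: verifying carefully the identity relating $AB$ to $H(x)$, and confirming the compatibility of the two lower bounds on $x$ (stability admissibility versus decay) under the single hypothesis \eqref{tech}. Once these identifications are nailed down, the remainder is a routine scaling plus $R\to\infty$ application of Lemma \ref{lmain1}, combined if desired with Souplet's comparison \eqref{estS} to transfer decay of $u$ into decay of $v$.
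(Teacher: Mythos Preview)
Your proposal is correct and follows essentially the same route as the paper's proof: rescale, apply Lemma~\ref{lmain1} to obtain $\int_{B_R}u^{2a\theta}\le CR^{d-2a\theta\alpha}$, identify $AB>1$ with $H(a\alpha)<0$, then pick $a$ just below $x_0/\alpha$ and $\theta$ just below $d/(d-2)$ to force the exponent negative. The one place where the paper is slightly more careful is in guaranteeing that $H<0$ immediately to the left of $x_0$: the paper invokes \cite[Lemma~3.1]{H} to get that $x_0$ is the \emph{unique} root of $H$ in $(\tfrac{q+1}{2}\alpha,+\infty)$, whence $H<0$ on the whole interval $(\tfrac{q+1}{2}\alpha,x_0)$; your argument from $H(0)<0$ and $H(+\infty)=+\infty$ alone does not exclude the degenerate possibility that $x_0$ is a root of even multiplicity (with $H\ge0$ on both sides), so you should either cite that lemma or supply the short additional algebra.
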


\begin{proof} First, for $a>\frac{q+1}{2},$ noting $x=a\alpha ,$ we can easily check that $AB>1\Leftrightarrow H(x)<0.$ Furthermore, by \cite[Lemma 3.1]{H} (applied with $p$ in place of $\theta$, $q$ in place of $p$ and where $H(x)=(\frac{\alpha}{2})^4L(\frac{2}{\alpha}x)$ according to the notations in that paper), we have $H(\frac{q+1}{2}\alpha )<0$ and $x_0$ is the unique root of $H$ in $(\frac{q+1}{2}\alpha, +\infty).$  Suppose that $d<2+2x_0.$ Then, there exists $\frac{q+1}{2}<a<\frac{x_0}{\alpha}$ such that $d<2+2a\alpha.$ Let $R>1$. The function defined for $y\in\R^d$ by
$$(u_R(y),v_R(y)) = (R^{\alpha} u(R y), R^{\beta} v(Ry))$$
is a positive stable solution of \eqref{1.1}. Replacing $u$ in \eqref{3.10} by $u_R$, we deduce that
\begin{align}\label{3.11}
 \int_{ B_R}u^{2a\theta}dx\leq CR^{d-2a\alpha\theta},
 \end{align}
for any  $1<\theta <\frac{d}{d-2}.$
Now, as $d<2+2a\alpha,$ we can take $\theta$ close to $\frac{d}{d-2}$ such that $d<2a \alpha\theta$ and then, letting $R\rightarrow\infty$ in \eqref{3.11}, we deduce that $u=0$.\end{proof}
%\medskip
Turning to solutions merely stable outside a compact set, we first make the following observation.
\begin{rem}\label{rem1}
Let $(u,v)$ be a positive classical solution of \eqref{1.1} which is stable outside a compact set $K$ of $\mathbb{R}^d.$ Let $R_0>0$ so that $K\subset B_{R_0}.$ Suppose that $d<2+2x_0,$ where $x_0$ is the largest root of the polynomial $H$ given by \eqref{polyH}. Let $\gamma <\frac{2x_0}{\alpha}\frac{d}{d-2}.$
\begin{enumerate}
\item Using \eqref{3.11}, we have
\begin{equation}\label{3.12}
 \int_{ B_R}u^{\gamma}dx\leq CR^{d-\gamma\alpha}, \quad \mbox{for any}\; R>0 \;\mbox{and}\; B_{2R}\subset \mathbb{R}^d\setminus K.% B_{R_0}.
\end{equation}
\item In addition, all the computations in the proof of Lemma \ref{lmain1} hold true by considering a cut-off function with support in $\Omega=B_{2R}\setminus\overline{B_{R_0}},$ for $R<R_0+3$ (see for example \cite{Far}), to find
\begin{equation}\label{3.int}
 \int_{ R_0+2<|x|<R}u^{\gamma}dx\leq C_1+C_2R^{d-\gamma\alpha}, \quad \mbox{for any}\; R>R_0+3.
\end{equation}
\end{enumerate}
\end{rem}
%\medskip
Following a strategy used in \cite{Far}, we deduce the following decay estimate.
\begin{lem}
   Let $p\ge q\ge1$ be such that $pq>1$ and $d< 2+2x_0,$ where $x_0$ is the largest root of the polynomial $H$ given by \eqref{polyH}. Suppose that $(u,v)\in C^2(\R^d)$ is a nonnegative solution of \eqref{1.1} which is stable outside a compact set and \eqref{not critical} holds. Then,
$$
u(x)=o\left(|x|^{-\alpha}\right), \quad v(x)=o\left(|x|^{-\beta}\right) \quad \text { as }|x| \rightarrow \infty.
$$
\end{lem}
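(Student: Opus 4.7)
We prove the two decay statements simultaneously by first establishing the rough $O$-analogue and then upgrading to little-$o$ via a rescaling/compactness argument, followed by a Liouville-type step on the punctured space. For the \emph{$O$-bound}, fix $R$ large enough that $B_{2R}\subset\R^d\setminus K$ and rescale $u_R(y):=R^\alpha u(Ry)$, $v_R(y):=R^\beta v(Ry)$. Then $(u_R,v_R)$ is a positive classical solution of \eqref{1.1} stable on the annulus $B_2\setminus B_{1/4}$. Applying Lemma~\ref{lmain1} on small balls contained in this annulus yields a uniform bound $\|u_R\|_{L^{2a\theta}(B_{3/2}\setminus B_{1/2})}\le C$, and combining this with Souplet's inequality $v_R\le Cu_R^{(q+1)/(p+1)}$ (Lemma~\ref{Soup}) and a standard Moser/Brezis--Kato bootstrap for the equation $-\Delta u_R=v_R^p\le Cu_R^{p(q+1)/(p+1)}$ gives a uniform $L^\infty$ bound on $u_R$ over $B_{5/4}\setminus B_{3/4}$. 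Translating back, this is $u(x)\le C|x|^{-\alpha}$ for $|x|$ large and, symmetrically, $v(x)\le C|x|^{-\beta}$.

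For the \emph{$o$-upgrade}, suppose by contradiction that there exist $c>0$ and $x_k\to\infty$ with $|x_k|^\alpha u(x_k)\ge c$. Set $R_k:=|x_k|$, $\theta_k:=x_k/R_k$, and pass to a subsequence so that $\theta_k\to\theta_0\in S^{d-1}$. Define
\begin{equation*}
(u_k,v_k)(y):=\bigl(R_k^\alpha u(R_ky),\,R_k^\beta v(R_ky)\bigr).
\end{equation*}
By the $O$-bound and interior elliptic regularity for \eqref{1.1}, the sequence $(u_k,v_k)$ is bounded in $C^{2,\sigma}_{\mathrm{loc}}(\R^d\setminus\{0\})$ and, passing again to a subsequence, $(u_k,v_k)\to(u_\infty,v_\infty)$ in $C^2_{\mathrm{loc}}(\R^d\setminus\{0\})$. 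The limit is a nonnegative classical solution of \eqref{1.1} on $\R^d\setminus\{0\}$, stable there (since the stability regions of $(u_k,v_k)$ fill up $\R^d\setminus\{0\}$ as $k\to\infty$), with $u_\infty\le C|y|^{-\alpha}$, $v_\infty\le C|y|^{-\beta}$, and $u_\infty(\theta_0)\ge c>0$.

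To \emph{derive the contradiction}, we show $u_\infty\equiv0$. Since $\alpha<d-2$ under supercriticality and \eqref{not critical}, the pointwise bounds imply that the nonlinearities lie in $L^1_{\mathrm{loc}}(\R^d)$, and the equations \eqref{1.1} for $(u_\infty,v_\infty)$ extend distributionally to all of $\R^d$ with no Dirac mass at $0$ (which would otherwise force a $|y|^{-(d-2)}$ singularity incompatible with the $|y|^{-\alpha}$ bound). The stability inequality \eqref{stability interp} extends to $\eta\in C^1_c(\R^d)$: approximating $\eta$ by $\eta\chi_\varepsilon$ with $\chi_\varepsilon$ vanishing near $0$, the error $\int\eta^2|\nabla\chi_\varepsilon|^2\lesssim\varepsilon^{d-2}$ vanishes as $\varepsilon\to0$. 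Running the proof of Theorem~\ref{main1} on $(u_\infty,v_\infty)$, now interpreted distributionally on $\R^d$, then yields $u_\infty\equiv0$, contradicting $u_\infty(\theta_0)\ge c>0$.

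\textbf{Main obstacle.} The principal difficulty lies in this last step: the iterative scheme used to prove Theorem~\ref{main1} relies on test functions of the form $u^a\eta$, which become singular at the origin when $u_\infty\sim|y|^{-\alpha}$. To make the iteration rigorous on the punctured space, one must approximate $\eta$ by test functions vanishing near $0$ and carefully control the error terms arising from the cut-off, using the pointwise $O$-bound from Step~1 together with the condition $\alpha<d-2$ to absorb the inner cut-off contribution in the limit.
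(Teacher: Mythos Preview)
Your approach via blow-down and a Liouville step on the limit is quite different from the paper's direct route, and the obstacle you flag in the last paragraph is real---but the fix you propose does not work.

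The gap is in Step~3. You correctly observe that the stability inequality \eqref{stability interp} extends to test functions $\eta\in C^1_c(\R^d)$, since the inner cut-off error is $\int\eta^2|\nabla\chi_\varepsilon|^2\lesssim\varepsilon^{d-2}\to0$. However, the proof of Theorem~\ref{main1} (via Lemmas~\ref{Newl} and~\ref{lmain1}) tests \eqref{stability interp} not with a bounded $\eta$ but with $\varphi=u_\infty^{\,a}\eta$, where $a>\tfrac{q+1}{2}$. Near the origin your only control is the $O$-bound $u_\infty(y)\le C|y|^{-\alpha}$, so the inner cut-off contribution scales like
\[
\int_{B_\varepsilon\setminus B_{\varepsilon/2}} u_\infty^{2a}\,|\nabla\chi_\varepsilon|^2
\;\lesssim\;\varepsilon^{-2a\alpha}\cdot\varepsilon^{-2}\cdot\varepsilon^{d}
\;=\;\varepsilon^{\,d-2-2a\alpha}.
\]
Since $2a\alpha>(q+1)\alpha=\alpha+\beta+2$, the exponent satisfies $d-2-2a\alpha<d-4-\alpha-\beta$. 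In the regime $\alpha+\beta\ge d-4$ (which the lemma certainly covers, e.g.\ for $d\le10$ in Theorem~\ref{tbth3}) this exponent is $\le0$ and the error does \emph{not} vanish as $\varepsilon\to0$. The condition $\alpha<d-2$ that you invoke controls $\varepsilon^{d-2-\alpha}$, not $\varepsilon^{d-2-2a\alpha}$; it is too weak by a factor of $2a$. Consequently $u_\infty^{\,a}\eta$ need not lie in $H^1$ across the origin, and the iteration leading to~\eqref{3.11} cannot be carried out for $(u_\infty,v_\infty)$. Without this, there is no contradiction.

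The paper avoids blow-down altogether. It works directly with the original $C^2(\R^d)$ solution $(u,v)$: from Remark~\ref{rem1}\,(2) one has $u\in L^{d/\alpha}(\{|x|>R_0+2\})$ with arbitrarily small tail, and then a Serrin-type local maximum principle applied to the linear equation $\Delta u+\ell(x)u=0$ with $\ell=v^p/u\le Cu^{2/\alpha}$ converts the $L^{d/\alpha}$-smallness into the pointwise bound $|y|^{\alpha}u(y)\le C\varepsilon$. This gives the $o$-decay in one stroke, without ever facing a singular limit.
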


\begin{proof} Fix $\varepsilon>0$. As $d<2+2x_0$ and \eqref{not critical} holds, we have $2<\frac{d}{\alpha}<\frac{2x_0}{\alpha}\frac{d}{d-2}.$ So, Applying 2) of Remark \ref{rem1} with $\gamma=\frac{d}{\alpha}$, there exists $R_{1}>R_{0}$ such that
\begin{equation}\label{3.13}
\left(\int_{|x| \geq R_{1}} u^{\frac{d}{\alpha}}\right)^{\frac{\alpha}{d}}<\varepsilon.
\end{equation}
Consider $y \in \mathbb{R}^{d}$ such that $|y|>2 R_{1}$ and set $\rho=\frac{|y|}{4}$. With this choice, we have
$$
B_{2 \rho}(y) \subset\left\{x \in \mathbb{R}^{d} ;|x|>R_{1}\right\} \subset\left\{x \in \mathbb{R}^{d} ;|x|>R_{0}\right\}.
$$
Remark that $u$ is a solution of the linear equation
$$
\Delta u+\ell(x) u=0 \text { in } {B_{2 \rho}(y)},
$$
where $\ell(x)=\frac{v^{p}}{u}$. By the comparison inequality \eqref{estS}, we have $\ell \leq C u^{\frac{pq-1}{p+1}}=Cu^{\frac2\alpha}.$ In addition, since $\frac{d}{\alpha}<\frac{2x_0}{\alpha}\frac{d}{d-2},$ there exists $\delta$ sufficiently small such that $\frac{2d}{(2-\delta)\alpha}<\frac{2x_0}{\alpha}\frac{d}{d-2} .$ So, by Remark \ref{rem1}, we derive that $\ell \in L^{\frac{d}{2-\delta}}\left(B_{2 \rho}(y)\right)$. Therefore, according to \cite[Theorem 1]{sz}, we obtain
\begin{equation}\label{3.14}
\|u\|_{L^{\infty} (B_{\rho}(y))} \leq C_{S} \rho^{-\frac{d}{2}}\|u\|_{L^{2} (B_{2 \rho}(y))},
\end{equation}
where $C_{S}$ is a constant depending only on $p, d, R_0$ $\rho^{\delta}\|\ell\|_{L^{\frac{d}{2-\delta}}(B_{2 \rho}(y))}$. Now, we have
$$
\rho^{\delta}\|\ell\|_{L^{\frac{d}{2-\delta}}(B_{2 \rho}(y))} \leq C \rho^{\delta}\left(\int_{B_{2 \rho}(y)} u^{\frac{2d}{(2-\delta)\alpha}} d x\right)^{\frac{2-\delta}{d}}.
$$
Applying Remark \ref{rem1} with $\gamma=\frac{2d}{(2-\delta)\alpha},$ it follows that
\begin{equation}
\rho^{\delta}\|\ell\|_{L^{\frac{d}{2-\delta}}(B_{2 \rho}(y))}  \leq C \rho^{\delta} \rho^{(d-\gamma \alpha)\frac{2-\delta}{d}}=C.
\end{equation}
Hence, the constant $C_{S}$ depends only on $p, d$ and $R_{0}$. Using \eqref{3.14} and applying H\"older's inequality, we obtain
$$
\begin{aligned}
\|u\|_{L^{\infty} (B_{\rho}(y))} & \leq C_{S} \rho^{-\frac{d}{2}}\|u\|_{L^{2} (B_{2 \rho}(y))} \\
& \leq C_{S} C \rho^{-\frac{d}{2}} \rho^{d\left(\frac{1}{2}-\frac{\alpha}{d}\right)}\|u\|_{L^{\frac{d}{\alpha}}(B_{2 \rho}(y))} \\
& =C_{1} \rho^{-\alpha}\|u\|_{L^{\frac{d}{\alpha}}(B_{2 \rho}(y))}.
\end{aligned}
$$
Now, using \eqref{3.13}, we get
$$
\begin{aligned}
|u(y)| & \leq\|u\|_{L^{\infty} (B_{\rho}(y))} \\
& \leq C_{1} \rho^{-\alpha} \varepsilon \\
& =C_{2}|y|^{-\alpha} \varepsilon.
\end{aligned}
$$
In summary, we have proven that for any $\varepsilon>0$ there exists $M=2 R_{1}$ such that
$$
y \in \mathbb{R}^{d},|y|>M \Rightarrow|y|^{\alpha}|u(y)| \leq C_{2} \varepsilon.
$$
Therefore, $u(x)=o\left(|x|^{-\alpha}\right)$ as $|x| \rightarrow \infty$ and by the comparison inequality \eqref{estS}, we deduce that $v(x)=o\left(|x|^{-\beta}\right)$, which completes the proof.
\end{proof}

\section{Fast decay solutions}\label{secfast}
In order to go further, we proceed by proving that the solution has in fact faster decay rate. As we shall see, this will be enough to conclude (in the range of parameters studied in the previous section), thanks to Pohozaev's identity.
\begin{thm}\label{prop 2.1}
  Let $d\geq 3,$ $p\geq q >1$ supercritical i.e.  $\frac{1}{p+1}+\frac{1}{q+1}< 1-\frac{2}{d}.$ Then, the system \eqref{1.1} has no classical positive solution $(u,v)$ satisfying
  \begin{equation}\label{Dec u v}
    u(x)=\smallO{\vert x\vert^{-\alpha}},\;v(x)=\smallO{\vert x\vert^{-\beta}},\quad \mbox{as}\; \vert x\vert \rightarrow \infty.
  \end{equation}
\end{thm}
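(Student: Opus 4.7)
My plan is to establish a Pohozaev-type identity on a large ball $B_R$ and pass to the limit $R \to \infty$, using the fast decay to kill the boundary contributions, then to combine it with a coupling identity coming from integration by parts in order to isolate a single integral whose vanishing is forced by the supercriticality.

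First I would upgrade the pointwise fast decay of $u,v$ to decay of their gradients via standard interior elliptic regularity (Schauder estimates on balls of radius $|x|/2$ centered at $x$, combined with rescaling): this yields $|\nabla u(x)| = \smallO{|x|^{-\alpha-1}}$ and $|\nabla v(x)| = \smallO{|x|^{-\beta-1}}$ as $|x|\to\infty$. Testing $-\Delta u = v^p$ against $x\cdot\nabla v$ and $-\Delta v = u^q$ against $x\cdot\nabla u$, summing, and integrating by parts on $B_R$ produces the Pohozaev identity
\begin{equation*}
A\int_{B_R} v^{p+1}\,dx + B\int_{B_R} u^{q+1}\,dx = \mathcal{B}(R),
\end{equation*}
where $A=\tfrac{d}{p+1}-\tfrac{d-2}{2}$, $B=\tfrac{d}{q+1}-\tfrac{d-2}{2}$, and $\mathcal{B}(R)$ is a sum of surface integrals on $\partial B_R$ involving $v^{p+1}$, $u^{q+1}$, $\nabla u\cdot\nabla v$, and products like $u\,\partial_\nu v$. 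Testing the same equations against $v$ and $u$ respectively and subtracting yields the coupling identity
\begin{equation*}
\int_{B_R} v^{p+1}\,dx - \int_{B_R} u^{q+1}\,dx = \int_{\partial B_R}\bigl(v\,\partial_\nu u - u\,\partial_\nu v\bigr)\,d\sigma.
\end{equation*}

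Using the decay of $(u,v,\nabla u,\nabla v)$, every boundary term in both identities is $\smallO{R^{d-(\alpha+\beta+2)}}$; averaging over $R$ in dyadic annuli via Fubini then produces a sequence $R_k\to\infty$ along which the boundary contributions are negligible relative to this natural scaling rate. Passing to the limit, the coupling identity forces $\int v^{p+1} = \int u^{q+1}$, and the Pohozaev identity reduces to
\begin{equation*}
\left(\frac{d}{p+1}+\frac{d}{q+1}-(d-2)\right)\int_{\R^d} v^{p+1}\,dx = 0.
\end{equation*}
The strict supercriticality assumption is precisely $\tfrac{d}{p+1}+\tfrac{d}{q+1}-(d-2)<0$, so $\int v^{p+1}=0$, hence $v\equiv 0$ and then $u\equiv 0$ via $-\Delta v = u^q$.

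The main technical obstacle is that in the strict supercritical regime $\alpha+\beta+2 < d$, so the fast decay only yields $v^{p+1} = \smallO{|x|^{-(\alpha+\beta+2)}}$, which is not automatically integrable over $\R^d$. To overcome this I would use a rescaling argument: the functions $u_R(x):=R^\alpha u(Rx)$ and $v_R(x):=R^\beta v(Rx)$ solve the same system \eqref{1.1}, and the strict little-$o$ decay ensures $(u_R,v_R)\to(0,0)$ uniformly on compact subsets of $\R^d\setminus\{0\}$. Combining this with the boundary-term bounds and testing the rescaled identities on a fixed annulus, one can promote the qualitative fast decay to a quantitative statement sufficient to justify the passage to the limit in both identities.
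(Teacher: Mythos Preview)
Your Pohozaev strategy is the right one, but there is a genuine gap that your final paragraph does not close. In the strict supercritical regime one has $\alpha+\beta<d-2$, so every boundary term in both of your identities is only $o(R^{d-2-\alpha-\beta})$ with a \emph{positive} exponent, and the same is true of the a priori bounds on the volume integrals (since $\alpha(q+1)=\beta(p+1)=\alpha+\beta+2<d$). Combining the Pohozaev identity with the coupling identity only yields $(A+B)\int_{B_R}v^{p+1}=o(R^{d-2-\alpha-\beta})$, which is no better than the pointwise bound you started from; dyadic averaging cannot improve this, and the rescaled solutions $(u_R,v_R)\to(0,0)$ on compacta of $\R^d\setminus\{0\}$ is merely a restatement of the hypothesis. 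No iteration gets off the ground because the boundary terms are controlled only by the pointwise decay, which you never improve.

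The missing idea is a bootstrap on the decay of $u$. Souplet's comparison $v^{p+1}\le C u^{q+1}$ gives $-\Delta u=v^p\le C u^{1+2/\alpha}$, and the hypothesis $u=o(|x|^{-\alpha})$ turns this into the linear inequality $-\Delta u\le \tfrac{\varepsilon^2}{|x|^2}u$ for $|x|$ large. Comparing with the explicit radial solutions $|x|^{-a_\pm}$, $a_\pm=\tfrac{d-2}{2}\pm\sqrt{(\tfrac{d-2}{2})^2-\varepsilon^2}$, of the associated Hardy equation and applying the maximum principle on annuli yields $u(x)\le C|x|^{2-d+\varepsilon}$ for any small $\varepsilon>0$ (this is Lemma~\ref{l3} in the paper). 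Now supercriticality gives $q+1>\tfrac{d}{d-2}$, so $u\in L^{q+1}(\R^d)$ and, via the comparison, $v\in L^{p+1}(\R^d)$; the corresponding gradient bound $|\nabla u|\le C|x|^{1-d+\varepsilon}$ makes every boundary term in the Pohozaev identity vanish as $R\to\infty$. The paper then exploits the free parameter in the Pohozaev identity (choosing $a_1=\tfrac{d}{p+1}$, $a_2=d-2-a_1$) to kill the $v^{p+1}$ coefficient directly and conclude, without needing your coupling identity.
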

Theorem \ref{prop 2.1} is known, see e.g. Theorem 1.1 in Cheng and Huang \cite{ch}. We provide a short proof for the convenience of the reader, by first establishing the following decay estimates.
\begin{lem}\label{l3}
  Let $d\geq 3,$ $p\geq q>1$ and $(u,v)$ a classical positive solution to the system \eqref{1.1} such that
  \begin{equation}\label{Dec u}
    u(x)=\smallO{\vert x\vert^{-\alpha}},\quad \mbox{as}\; \vert x\vert \rightarrow \infty.
  \end{equation}
    Then, for any small $\epsilon >0$, there exists two positive constants $c$ and $C$ such that
   \begin{equation}\label{F Dec}
     c \vert x\vert ^{2-d}\leq u(x)\leq C\vert x\vert ^{2-d+\epsilon} \quad\mbox{and} \quad\vert\nabla u(x)\vert\leq C\vert x\vert ^{1-d+\epsilon}\quad \mbox{for}\; \vert x\vert\ge 1.
   \end{equation}
   In addition,
   \begin{equation}\label{F Dec2}
     \quad \vert\nabla v(x)\vert=o(\vert x\vert ^{-\beta-1}) \quad \mbox{as}\; \vert x\vert \rightarrow \infty.
   \end{equation}
\end{lem}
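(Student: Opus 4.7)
The plan is to establish the lower bound by comparison with the fundamental solution, the upper bound by the Riesz potential representation combined with an iterative bootstrap, and the gradient estimates by interior elliptic regularity on suitable rescalings.

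\emph{Lower bound.} Since $-\Delta u=v^p\ge 0$ and $u>0$, $u$ is a positive superharmonic function on $\R^d$. Let $m:=\min_{\partial B_1}u>0$. The function $w(x):=u(x)-m|x|^{2-d}$ is superharmonic in $\R^d\setminus\overline{B_1}$, non-negative on $\partial B_1$, and tends to zero at infinity (since $\alpha>0$ and $u(x)=o(|x|^{-\alpha})$). The minimum principle then gives $u(x)\ge m|x|^{2-d}$ for all $|x|\ge 1$.

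\emph{Upper bound for $u$.} Because $u$ is positive superharmonic on $\R^d$ ($d\ge 3$) with $u\to 0$ at infinity, the Riesz decomposition theorem yields the representation
\[
u(x)=c_d\int_{\R^d}|x-y|^{2-d}v^p(y)\,dy.
\]
Souplet's inequality (Lemma \ref{Soup}) combined with the identity $p(q+1)/(p+1)=1+2/\alpha$ yields $v^p\le C u^{1+2/\alpha}$, hence $v^p(y)\le C\phi(|y|)^{1+2/\alpha}|y|^{-\alpha-2}$ where $\phi(R):=\sup_{|x|\ge R}|x|^\alpha u(x)\to 0$. I split the Riesz integral into the three regions $|y|\le |x|/2$, $|x|/2\le|y|\le 2|x|$, and $|y|\ge 2|x|$. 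Using Lemma \ref{l.2.1} to handle a fixed compact core of the first region and the pointwise decay of $v^p$ elsewhere yields the schematic recursive bound $\phi(2R_0)\le C\phi(R_0)^{1+2/\alpha}$ for all large $R_0$. Since the exponent $1+2/\alpha>1$, iterating this dyadically converts the qualitative decay $\phi\to 0$ into polynomial decay $\phi(R)\le C_\delta R^{-\delta}$ for every $\delta>0$. Reinserting this polynomial rate into the Newton integral gives a genuinely improved pointwise bound on $v^p$ at each pass; after finitely many iterations one reaches the regime where $v^p$ is integrable at infinity, so that the Riesz potential is bounded by $C|x|^{2-d}$ up to a logarithmic correction, yielding $u(x)\le C_\varepsilon|x|^{2-d+\varepsilon}$ for any small $\varepsilon>0$.

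\emph{Gradient estimates.} For $R\ge 1$, set $u_R(z):=R^{d-2-\varepsilon}u(Rz)$ and $v_R(z):=R^\beta v(Rz)$ on the annulus $\{1/2<|z|<2\}$. The pointwise upper bound just obtained gives $\|u_R\|_\infty\le C$ uniformly in $R$, and the rescaled equation $-\Delta u_R=R^{d-\varepsilon}v^p(R\cdot)$ has a uniformly bounded right-hand side thanks to $p\beta=\alpha+2$ and $\alpha<d-2$. Standard interior $C^1$ regularity for the Poisson equation then gives $\|\nabla u_R\|_\infty\le C$, which rescales to $|\nabla u(x)|\le C|x|^{1-d+\varepsilon}$. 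For $\nabla v$, the hypothesis $u=o(|x|^{-\alpha})$ combined with Souplet gives $v_R=o(1)$; using $\alpha q=\beta+2$ and the derived upper bound on $u$, also $-\Delta v_R=R^{\beta+2}u^q(R\cdot)=o(1)$, so elliptic regularity yields $\|\nabla v_R\|_\infty=o(1)$, i.e., $|\nabla v(x)|=o(|x|^{-\beta-1})$.

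\emph{Main obstacle.} The delicate step is the upper-bound bootstrap. A single application of the Newton potential merely reproduces the hypothesis $u=o(|x|^{-\alpha})$, because $v^p$ barely fails to be integrable in the supercritical regime (with $\int_{B_R}v^p\lesssim R^{d-\alpha-2}\to\infty$). The crucial structural input is the strictly super-linear exponent $1+2/\alpha>1$ produced by Souplet's inequality, which enables a self-improving dyadic iteration; only after this iteration has been pushed far enough to make $v^p$ integrable does the desired fast decay $|x|^{2-d+\varepsilon}$ emerge.
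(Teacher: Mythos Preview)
Your lower bound and gradient estimates follow essentially the paper's route (comparison with the fundamental solution; rescale and apply interior estimates). The upper bound, however, has a genuine gap: the dyadic recursion $\phi(2R_0)\le C\phi(R_0)^{1+2/\alpha}$ does not follow from the Riesz split. At $|x|=2R_0$ the inner region $|y|\le R_0$ contributes
\[
|x|^{\alpha}\cdot C|x|^{2-d}\int_{B_{R_0}}v^p\;\le\; CR_0^{\alpha+2-d}\int_{B_{R_0}}v^p,
\]
and Lemma~\ref{l.2.1} only gives $\int_{B_{R_0}}v^p\le CR_0^{d-\alpha-2}$, hence a contribution bounded by a \emph{fixed constant} $C_0>0$, not by $C\phi(R_0)^{1+2/\alpha}$. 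Replacing Lemma~\ref{l.2.1} by the pointwise bound on $v^p$ produces a weighted average of $\phi(r)^{1+2/\alpha}$ over all of $[1,R_0]$, which is $o(1)$ but in general not dominated by $\phi(R_0)^{1+2/\alpha}$. The heart of the difficulty is that $\alpha$ is a \emph{fixed point} of the map $\gamma\mapsto\gamma(1+2/\alpha)-2$, so your ``self-improving'' iteration only kicks in once some initial polynomial gain $\phi(R)\le CR^{-\delta}$ is available; extracting that first $\delta>0$ from the mere qualitative hypothesis $\phi\to0$ is exactly the missing step, and the dyadic recursion does not supply it. (One can salvage the strategy with a variable step $R_0\mapsto M_kR_0$, choosing $M_k$ so that the inner contribution matches $\phi(R_k)^{1+2/\alpha}$, and then comparing the growth of $R_k$ against the decay of $\phi(R_k)$; but this is substantially more delicate than what you wrote.)

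The paper bypasses all of this: from Souplet's inequality one has $-\Delta u\le Cu^{2/\alpha}\,u$, and the hypothesis $u=o(|x|^{-\alpha})$ gives $Cu^{2/\alpha}\le \epsilon^2|x|^{-2}$ for $|x|>R_\epsilon$. The linear equation $-\Delta f=\epsilon^2|x|^{-2}f$ has explicit solutions $|x|^{-a_\pm}$ with $a_++a_-=d-2$, $a_+a_-=\epsilon^2$; a barrier $w=C_1|x|^{-a_+}+C_2|x|^{-a_-}$ on annuli $R_\epsilon<|x|<R$ and the maximum principle (letting $R\to\infty$) yield $u\le C|x|^{-a_+}$ directly, and $a_+\to d-2$ as $\epsilon\to0$. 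A smaller point: your justification that $-\Delta u_R=R^{d-\varepsilon}v^p(R\cdot)$ is bounded via ``$p\beta=\alpha+2$ and $\alpha<d-2$'' actually gives $R^{d-\alpha-2-\varepsilon}$, which \emph{diverges} for small $\varepsilon$ precisely because $\alpha<d-2$; you must feed the improved bound on $u$ through Souplet to get $v^p\le C|x|^{(2-d+\varepsilon)(1+2/\alpha)}$ first.
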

\begin{proof}
 Since $u$ is a non-trivial, positive and superharmonic function, the first inequality in \eqref{F Dec} is a standard comparison result. For the proof see e.g. \cite[Lemma 2.1]{sz}.

Now, by Lemma \ref{Soup}, we have $$-\Delta u= v^p\leq \left(\frac{p+1}{q+1}\right)^{\frac{p}{p+1}}u^{\frac{2}{\alpha}}u.$$ Let $0<\epsilon<\frac12.$ By the decay estimate \eqref{Dec u}, there exists $R_\epsilon >1$ such that $\left(\frac{p+1}{q+1}\right)^{\frac{p}{p+1}}u^{\frac{2}{\alpha}}\le \frac{\epsilon^2}{\vert x\vert^2},$ for any $\vert x\vert >R_\epsilon.$ Therefore,
  $$-\Delta u-\frac{\epsilon^2}{\vert x\vert^2}u\leq 0,\; \mbox{for any}\; \vert x\vert >R_\epsilon.$$
  Now, we can easily check that the functions $f_{a}:=\vert x\vert^{-a}$,  $f_{b}:=\vert x\vert^{-b}$ are solutions of
  \begin{equation}\label{noy}
    -\Delta f -\frac{\epsilon^2}{\vert x\vert^2}f=0 \quad \mbox{in}\; \mathbb{R}^d\backslash \{0\},
  \end{equation}
  where $a= \frac{d-2}{2}+ \sqrt{(\frac{d-2}{2})^2-\epsilon^2}$ and $b=\frac{d-2}{2}- \sqrt{(\frac{d-2}{2})^2-\epsilon^2}$. Let $R> R_\epsilon$ and consider $w(x)=R_\epsilon^{a}\|u\|_\infty f_{a}(x) +\epsilon^2 R^{b-\alpha}f_b(x).$
  Then, we have
  \begin{align}\label{}
 \left\{\begin{array}{lll}\left(-\Delta  -\frac{\epsilon^2}{\vert x\vert^2}\right)(u-w)\leq 0,\quad \mbox{in}\;A_{R_\epsilon}^R:=B_R\backslash \overline{B_{R_\epsilon}}, \\
u-w\leq 0,\quad \mbox{on}\; \partial A_{R_\epsilon}^R.
\end{array}
\right.
\end{align}
By the maximum principle, we deduce that $u\leq w$ in $A_{R_\epsilon}^R$ for any $R>A_{R_\epsilon}^R.$ Choosing $\epsilon>0$ so small that $b-\alpha<0$ and letting $R\rightarrow \infty,$ we conclude that for $\vert x\vert\ge R_\epsilon$,
$$u(x)\leq C\vert x\vert ^{-a}\leq C\vert x\vert ^{2-d+\epsilon}.$$
To obtain the gradient estimates, we scale and apply standard elliptic regularity. Precisely, given $z\in\R^d\setminus\{0\}$ and $\rho=\vert z\vert/2$, let $(\tilde u(x), \tilde v(x))=(\rho^{\alpha}u(z+\rho x), \rho^{\beta}v(z+\rho x))$ for $x\in\R^d$. Then, $(\tilde u,\tilde v)$ still solves the Lane-Emden system and by standard elliptic regularity
$$
\vert\nabla \tilde u(0)\vert\le C\left(\Vert\tilde v^p\Vert_{L^\infty(B_1)}+\Vert\tilde u\Vert_{L^\infty(B_1)}\right)\le C'\Vert\tilde u\Vert_{L^\infty(B_1)},
$$
where the last inequality follows from the comparison inequality Lemma \ref{Soup} and the fact that $pq\ge 1$. Hence, for $\vert z\vert\ge1$,
$$
\vert\nabla  u(z)\vert\le C\vert z\vert^{1-d+\epsilon},
$$
as claimed. The gradient estimate on $v$ follows similarly.
\end{proof}

The last crucial ingredient in the proof of Theorem \ref{prop 2.1} is the following identity of Pohozaev-type (see \cite{Mi, pz, souplet}).
\begin{lem}
  Let $p, \,q>0$ and $(u, v)$ a positive classical solution of \eqref{1.1}. Then, for any $R>0$, there holds
  \begin{align}\label{Poh}
\begin{split}
&\; \left(\frac{d}{p+1}-a_1\right)\int_{B_R} v^{p+1}  +\left(\frac{d}{q+1}-a_2\right) \int_{B_R} u^{q+1}  \\
=& R^d\int_{S^{d-1}}\left(\frac{u^{q+1}}{q+1}+\frac{v^{p+1}}{p+1}\right)(R\theta)d\sigma(\theta) +R^{d-1}\int_{S^{d-1}}\left(a_1u\partial_rv+a_2v\partial_ru\right)(R\theta)d\sigma(\theta)\\
&+R^d\int_{S^{d-1}}\left(\partial_ru\partial_rv
-\frac{\nabla_\theta u\cdot\nabla_\theta v}{R^2}\right)(R\theta)d\sigma(\theta),
\end{split}
 \end{align}
 where $a_2,a_2 \in \mathbb{R}$ satisfy $a_1+a_2=d-2$.
\end{lem}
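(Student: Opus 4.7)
\medskip
\noindent\textbf{Proof proposal.} The statement is the standard Pohozaev-type identity for the Lane--Emden system, obtained by combining the Noether-like identity generated by the scaling $u\mapsto u(\lambda x)$, $v\mapsto v(\lambda x)$ with the first-order test $a_1 v$ (resp.\ $a_2 u$) against the two equations. The plan is as follows.

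\emph{Step 1: Pohozaev divergence identity.} I introduce the vector field
\begin{equation*}
F_k \;=\; (x\cdot\nabla v)\,\partial_k u \;+\; (x\cdot\nabla u)\,\partial_k v \;-\; x_k\,(\nabla u\cdot\nabla v),
\end{equation*}
and compute $\partial_k F_k$ directly in coordinates. The terms involving second derivatives of $u$ and $v$ cancel in pairs (after relabelling indices), and what remains is
\begin{equation*}
\mathrm{div}\,F \;=\; -(d-2)\,\nabla u\cdot\nabla v \;+\; (x\cdot\nabla v)\,\Delta u \;+\; (x\cdot\nabla u)\,\Delta v.
\end{equation*}
Substituting the PDEs in \eqref{1.1} converts the last two terms into $-x\cdot\nabla\bigl(\tfrac{v^{p+1}}{p+1}+\tfrac{u^{q+1}}{q+1}\bigr)$.

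\emph{Step 2: Integrate over $B_R$.} The divergence theorem on the left gives $\int_{\partial B_R} F\cdot\nu$, and $\int_{B_R}x\cdot\nabla\bigl(\tfrac{v^{p+1}}{p+1}+\tfrac{u^{q+1}}{q+1}\bigr)$ is transformed by another integration by parts to produce the volume terms with the factor $d$ and the sphere term with factor $R$. This yields an identity of the form
\begin{equation*}
(d-2)\int_{B_R}\nabla u\cdot\nabla v \;=\; d\int_{B_R}\Bigl(\tfrac{v^{p+1}}{p+1}+\tfrac{u^{q+1}}{q+1}\Bigr) \;-\; R\int_{\partial B_R}\Bigl(\tfrac{v^{p+1}}{p+1}+\tfrac{u^{q+1}}{q+1}\Bigr) \;-\; \int_{\partial B_R} F\cdot\nu .
\end{equation*}

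\emph{Step 3: Two first-order Pohozaev identities.} Writing $d-2 = a_1+a_2$, I split the left-hand side and express each piece through a distinct integration by parts: testing $-\Delta u = v^p$ against $a_1 v$ gives $a_1\int\nabla u\cdot\nabla v = a_1\int v^{p+1} + \int_{\partial B_R} a_1 v\,\partial_r u$, and symmetrically testing $-\Delta v=u^q$ against $a_2 u$ gives the analogous formula producing $a_2\int u^{q+1}$ and the boundary term $\int_{\partial B_R} a_2 u\,\partial_r v$. Substituting into the previous display and rearranging puts the two volume integrals $\bigl(\tfrac{d}{p+1}-a_1\bigr)\int_{B_R}v^{p+1}$ and $\bigl(\tfrac{d}{q+1}-a_2\bigr)\int_{B_R}u^{q+1}$ on one side.

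\emph{Step 4: Rewrite the boundary terms.} On $\partial B_R$ one has $\nu=x/R$, $x\cdot\nabla w = R\,\partial_r w$, and $\nabla u\cdot\nabla v = \partial_r u\,\partial_r v + R^{-2}\nabla_\theta u\cdot\nabla_\theta v$. A direct computation then gives $F\cdot\nu = R\,\partial_r u\,\partial_r v - R^{-1}\nabla_\theta u\cdot\nabla_\theta v$. Finally, parametrizing $\partial B_R$ by $x=R\theta$ with $\theta\in S^{d-1}$ and absorbing the Jacobian $R^{d-1}$ yields precisely the three boundary expressions in \eqref{Poh}. No convergence or decay assumption is needed since $R$ is finite and $(u,v)\in C^2$; the only bookkeeping hurdle is the bilinear identity in Step~1, and beyond that the argument is a careful accounting of boundary terms.
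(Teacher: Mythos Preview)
Your argument is correct and is the standard derivation of the Pohozaev identity for the Lane--Emden system: the bilinear Rellich--Pohozaev vector field in Step~1, the divergence theorem in Step~2, the split $d-2=a_1+a_2$ handled by two different integrations by parts of $\int_{B_R}\nabla u\cdot\nabla v$ in Step~3, and the polar decomposition of the boundary terms in Step~4 are all accurate. The paper does not give its own proof of this lemma; it simply cites \cite{Mi, pz, souplet}, and your derivation is exactly the computation underlying those references.

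One small bookkeeping point: with your choice in Step~3 (testing $-\Delta u=v^p$ against $a_1 v$ and $-\Delta v=u^q$ against $a_2 u$), the boundary contribution you obtain is $R^{d-1}\int_{S^{d-1}}(a_1 v\,\partial_r u + a_2 u\,\partial_r v)\,d\sigma$, whereas the lemma as printed has the labels $a_1,a_2$ interchanged in that term. This is harmless, since the identity is stated for arbitrary $a_1,a_2$ with $a_1+a_2=d-2$ and the only application in the paper (the proof of Theorem~\ref{prop 2.1}) sends all boundary terms to zero anyway; but it is worth flagging that the pairing you derive is the internally consistent one.
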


\begin{proof}[Proof of Theorem \ref{prop 2.1}.] We argue by contradiction and suppose that there exists a classical positive solution $(u,v)$ to the system \eqref{1.1} which decays faster than the homogeneous solution.
Thanks to \eqref{F Dec}, since $q+1>\frac{d}{d-2}$, $u\in L^{q+1}(\R^d)$. By the comparison inequality \eqref{estS}, $v\in L^{p+1}(\mathbb{R}^d)$. We deduce that the left-hand side of \eqref{Poh} converges as $R\to+\infty$. Thanks to \eqref{F Dec} and \eqref{F Dec2}, we also deduce that all integrals on the right-hand side of \eqref{Poh} converge to $0$ as $R\to+\infty$. Hence,
 \begin{equation*}
   \left(\frac{d}{p+1}-a_1\right)\int_{\R^d} v^{p+1} dx +\left(\frac{d}{q+1}-a_2\right) \int_{\R^d} u^{q+1} dx=0,
 \end{equation*}
 for any $a_1,\,a_2$ satisfying $a_1+a_2=d-2.$ Take $a_1= \frac{d}{p+1},$ it follows that
 \begin{equation*}
   \left(\frac{d}{q+1}-a_2\right) \int_{\R^d} u^{q+1} dx=0.
 \end{equation*}
 Since $\frac{1}{p+1}+\frac{1}{q+1}<\frac{d-2}{2},$ we have
 $$\left(\frac{d}{q+1}-a_2\right)=\frac{d}{q+1}+\frac{d}{q+1}-(d-2)<0.$$
 Therefore, $u\equiv 0$ in $\mathbb{R}^d,$ a contradiction. %So, we are done in the supercritical case.
\end{proof}

From the above two sections, we conclude the proof of Theorem \ref{tbth4}. Moreover, as shown in page 277 of \cite{HHM}, we have $x_0>4,$ $\forall\, p\geq q>1.$ Hence, Theorem \ref{tbth3} is a consequence of Theorem \ref{tbth4} and Theorem \ref{main1}.

%Also, note that if $(u,v)$ solves the Lane-Emden system in $\Omega$ and $m=1+1/p$, then\footnote{using the convention that  $\vert t\vert^{m-2}t=0$ if $t=0$ and $m>1$} $v=\vert\Delta u\vert^{m-2}\Delta u$.
\section{Stable homogeneous solutions}
This section is not needed for the proofs of the theorems stated in the introduction. It is written in order to classify homogeneous solutions and to prepare the reader for the more delicate iteration method of the next section.
Let us restrict to the class of stable homogeneous solutions to the Lane-Emden system \eqref{1.1}, in the supercritical case \eqref{supercritical}. Let $u(r\theta)=r^{-\alpha}f(\theta),\;v(r\theta)=r^{-\beta}g(\theta),_; r> 0,\,\theta \in S^{d-1}$ be a stable homogeneous solution of \eqref{1.1}. Then $(f,g)$ satisfies
\begin{equation}\label{1.8}
-\Delta_\theta f  = -\lambda f+  \vert g\vert^{p-1}g ,\quad
-\Delta_\theta g  = -\mu g+ \vert f\vert^{q-1}f \;\quad\text{in } \;S^{d-1}.
\end{equation}
Moreover, from Lemma \ref{lemma4}, the stability of the homogeneous solution $(u,v)$, it follows that
\begin{equation}\label{1.9}
\int_{S^{d-1}} \abs {\nabla_\theta \phi}^2 \,d\gs +H\int_{S^{d-1}} \phi^2 \,d\gs\geq \sqrt{pq}\int_{S^{d-1}} \vert f\vert^{\frac{q-1}{2}}\vert g\vert^{\frac{p-1}{2}}\phi^2\,d\gs,\quad \forall\, \phi\in C^{\infty}(S^{d-1}).
\end{equation}
We prove the following theorem.
\begin{thm}\label{thm-homo} Let $p\geq q \ge 1$ such that $pq>1$. There exists a stable homogeneous solution to \eqref{1.1}  if and only if \eqref{1.7} does not hold.
\end{thm}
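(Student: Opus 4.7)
Assume $H^2 \ge pq\lambda\mu$. I will verify that the singular solution $(u_s,v_s)(x)=(a_0|x|^{-\alpha},b_0|x|^{-\beta})$ is a stable homogeneous solution, where $(a_0,b_0)>0$ are uniquely determined by $\lambda a_0=b_0^p$ and $\mu b_0=a_0^q$, so that $a_0^{q-1}b_0^{p-1}=\lambda\mu$. I use the ansatz $(\phi,\psi)(x)=(|x|^{-\sigma},A|x|^{-\tau})$ with $A>0$ and $\sigma=(d-2+\gamma)/2$, $\tau=(d-2-\gamma)/2$, giving $\sigma(d-2-\sigma)=\tau(d-2-\tau)=H$. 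A direct computation shows $-\Delta\phi=H|x|^{-\sigma-2}$ and $-\Delta\psi=AH|x|^{-\tau-2}$, while $pv_s^{p-1}\psi=pAb_0^{p-1}|x|^{-\beta(p-1)-\tau}$ and $qu_s^{q-1}\phi=qa_0^{q-1}|x|^{-\alpha(q-1)-\sigma}$. Thanks to the scaling identities $\alpha(q-1)+\beta(p-1)=4$ and $\sigma-\tau=\gamma$, the radial exponents on both sides of \eqref{stabb} match, and the stability reduces to $pAb_0^{p-1}\le H$ and $qa_0^{q-1}\le AH$. A positive $A$ satisfying both exists if and only if $pq\,a_0^{q-1}b_0^{p-1}\le H^2$, which is exactly the assumption $H^2\ge pq\lambda\mu$.

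\textbf{Necessity.} Conversely, let $(u,v)=(|x|^{-\alpha}f(\theta),|x|^{-\beta}g(\theta))$ be a stable homogeneous solution with $f,g>0$ on $S^{d-1}$ solving \eqref{1.8}. Being self-similar, $(u,v)$ coincides with its own blow-down limit, so Lemma \ref{lemma4} furnishes the sphere stability inequality \eqref{1.9}. Motivated by the Joseph-Lundgren strategy for the scalar equation $-\Delta_\theta f+\lambda f=f^p$ (where the critical test $\eta=f^{a_\ast}$ with $a_\ast=p+\sqrt{p(p-1)}$ makes the coefficient $p-a_\ast^2/(2a_\ast-1)$ vanish and directly yields the threshold $H\ge p\lambda$), I would test \eqref{1.9} with $\eta=f^{a_1}g^{a_2}$. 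Choosing $(a_1,a_2)$ on the boundary $AB=1$ of Lemma \ref{Newl} (with $b=\tfrac{p+1}{q+1}a$, corresponding to the largest root of the polynomial in \eqref{polyH}), one integrates by parts against \eqref{1.8} and applies Cauchy-Schwarz in the form $\bigl(\int f^{(q-1)/2+2a_1}g^{(p-1)/2+2a_2}\bigr)^2\le \int f^{2a_1-1}g^{2a_2+p}\cdot\int f^{2a_1+q}g^{2a_2-1}$ to pair the leading integral with the two equation-generated terms. At the critical $(a_1,a_2)$ the coefficient in front of this leading integral vanishes, leaving a residual inequality of the form $0\le(H^2-pq\lambda\mu)\int f^{2a_1}g^{2a_2}\,d\sigma$, whence $H^2\ge pq\lambda\mu$.

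\textbf{Main obstacle.} The delicate step is handling the cross term $\int f^{2a_1-1}g^{2a_2-1}\nabla_\theta f\cdot\nabla_\theta g$ produced by integrating $|\nabla_\theta(f^{a_1}g^{a_2})|^2$: requiring $a_1+a_2=1$ makes it vanish but forces $(2a_1-1)(2a_2-1)\le 0$, spoiling positivity of the coefficients $a_i^2/(2a_i-1)$; relaxing $a_1+a_2=1$ forces one to bound this cross term without losing the sharp constant. A cleaner alternative is to work directly in $\R^d$ via Theorem \ref{prop-gamma}, applying \eqref{stability interp improved} with a separable $\varphi(r\theta)=h(r)\eta(\theta)$ whose radial part $h$ saturates Hardy's inequality; this reduces Theorem \ref{thm-homo} to a principal-eigenvalue comparison on $S^{d-1}$ for the weighted operator $-\Delta_\theta+H-\sqrt{pq}\,f^{(q-1)/2}g^{(p-1)/2}$, whose value at the constant singular profile is precisely $H-\sqrt{pq\lambda\mu}$, so that stability forces $H^2\ge pq\lambda\mu$ once one shows that the minimizing eigenfunction may be taken constant (or at least that the constant test is sharp among positive solutions of \eqref{1.8}).
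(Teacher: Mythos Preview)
Your sufficiency argument is correct and matches what the paper asserts in the introduction.

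Your necessity argument, however, is incomplete, and you have correctly identified the obstruction yourself. The mixed test function $\eta=f^{a_1}g^{a_2}$ produces the cross term $\int f^{2a_1-1}g^{2a_2-1}\nabla_\theta f\cdot\nabla_\theta g$ that you cannot control with the sharp constant; neither the choice $a_1+a_2=1$ nor relaxing it leads to a clean inequality. Your alternative principal-eigenvalue route also has a genuine gap: concluding that the constant test is sharp among positive solutions of \eqref{1.8} would amount to knowing that $(f,g)$ are themselves constant, which is precisely what you are trying to prove. So neither branch of your necessity argument closes.

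The paper sidesteps the cross-term difficulty entirely by \emph{not} using a mixed test function. Instead it tests \eqref{1.9} separately with $|f|^{a-1}f$ and with $|g|^{b-1}g$ (where $b=\tfrac{p+1}{q+1}a$), and independently multiplies each equation in \eqref{1.8} by the corresponding power of its own unknown. This yields two pairs of inequalities; \emph{multiplying} the resulting estimates (and using H\"older to handle the asymmetric right-hand sides $\int g^p f^{2a-1}$ and $\int f^q g^{2b-1}$) produces a single inequality of the form
\[
(AB-1)\,G_1G_1' + (\sqrt{pq}A\mu-H)\,G_1G_2' + (\sqrt{pq}B\lambda-H)\,G_1'G_2 + (pq\lambda\mu-H^2)\,G_2G_2' \le 0,
\]
where $G_1=\int|\nabla_\theta f^a|^2$, $G_2=\int f^{2a}$, and similarly for $G_1',G_2'$ with $g^b$. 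The choice is then $a=(q+1)/2$, $b=(p+1)/2$ --- not the boundary $AB=1$ you were aiming at --- and one checks directly that, under $H^2<pq\lambda\mu$ and $d-4>\alpha+\beta$, all four coefficients above are strictly positive (the case $d-4\le\alpha+\beta$ having been handled earlier by Theorem~\ref{main1}). This forces $f=g=0$. The key conceptual move you are missing is that separating the test functions and then \emph{multiplying inequalities} replaces the troublesome gradient cross term by a product of independent Dirichlet integrals, which is automatically nonnegative.
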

\begin{proof} [Proof of Theorem \ref{thm-homo}]
By the discussion in the introduction, it suffices to prove the "only if" part of Theorem \ref{thm-homo}. For this, we proceed by contradiction. Suppose that $H^2<pq \lambda \mu$ and $(u,\,v)=(r^{-\alpha}f(\theta),\,r^{-\beta}g(\theta))$ is a stable homogeneous solution of \eqref{1.1}. As in Section \ref{sec3}, we denote $A= \sqrt{pq}\frac{2a-1}{a^2}$ and $B=\sqrt{pq}\frac{2b-1}{b^2},$ with $b=\frac{p+1}{q+1}a.$ Multiplying the equations of the system \eqref{1.8} by $\sqrt{pq}\vert f\vert^{2a-2}f$ and $\sqrt{pq}\vert g\vert^{2b-2}g$ respectively and integrating over $S^{d-1},$ there holds
\begin{equation}\label{4.1}
A \int_{S^{d-1}} \abs {\nabla_\theta (\vert f\vert ^{a-1}f)}^2 \,d\gs +\sqrt{pq}\lambda \int_{S^{d-1}} \vert f\vert^{2a} \,d\gs=\sqrt{pq} \int_{S^{d-1}}  \vert g\vert^{p-1}g\vert f\vert^{2a-2}f\,d\gs,
\end{equation}
and
\begin{equation}\label{4.2}
B \int_{S^{d-1}} \abs {\nabla_\theta (\vert g\vert ^{b-1}g)}^2 \,d\gs +\sqrt{pq}\mu \int_{S^{d-1}} \vert g\vert^{2b} \,d\gs=\sqrt{pq} \int_{S^{d-1}} \vert f\vert^{q-1}f\vert g\vert^{2b-2}g\,d\gs.
\end{equation}
Take $a>\frac{q+1}{4},$ $b=\frac{p+1}{q+1}a>\frac{p+1}{4}$ and let $m:=\frac{q+1}{4a}=\frac{p+1}{4b}\in (0,\,1).$ By H\"older's inequality, we get
\begin{equation}\label{4.3}
\sqrt{pq} \int_{S^{d-1}}  \vert g\vert^{p-1}g\vert f\vert^{2a-2}f\,d\gs\leq  \sqrt{pq} \int_{S^{d-1}} \vert f\vert^{\frac{q-1}{2}}\vert g\vert^{\frac{p-1}{2}}\vert f\vert^{2a(1-m)}\vert g\vert^{\frac{p+1}{2}}\leq \sqrt{pq}I_1^{1-m}I_2^m
\end{equation}
and
\begin{equation}\label{4.4}
\sqrt{pq} \int_{S^{d-1}} \vert f\vert^{q-1}f\vert g\vert^{2b-2}g\,d\gs\leq  \sqrt{pq} \int_{S^{d-1}} \vert f\vert^{\frac{q-1}{2}}\vert g\vert^{\frac{p-1}{2}}g^{2b(1-m)}\vert f\vert ^{\frac{q+1}{2}}\leq \sqrt{pq}I_1^{m}I_2^{1-m},
\end{equation}
where $I_1:=\displaystyle{\int_{S^{d-1}}} \vert f\vert^{\frac{q-1}{2}+2a}\vert g\vert^{\frac{p-1}{2}}\,d\gs$ and $I_2:=\displaystyle{\int_{S^{d-1}}} \vert f\vert^{\frac{q-1}{2}}\vert g\vert^{\frac{p-1}{2}+2b}\,d\gs.$
 Combining \eqref{4.1}--\eqref{4.4}, we obtain
\begin{equation*}
\begin{split}
&\left(A \int_{S^{d-1}} \abs {\nabla_\theta (\vert f\vert ^{a-1}f)}^2 \,d\gs +\sqrt{pq}\lambda \int_{S^{d-1}} \vert f\vert^{2a} \,d\gs\right)\left(B \int_{S^{d-1}} \abs {\nabla_\theta (\vert g\vert ^{b-1}g)}^2 \,d\gs +\sqrt{pq}\mu \int_{S^{d-1}} \vert g\vert^{2b} \,d\gs\right)\\
&\leq pq I_1I_2.
\end{split}
\end{equation*}
On the other hand, testing $\vert f\vert^{a-1}f$ and $\vert g\vert^{b-1}g$ in the stability inequality \eqref{1.9}, there holds
\begin{equation*}
\sqrt{pq}I_1\leq   \int_{S^{d-1}} \abs {\nabla_\theta (\vert f\vert^{a-1}f)}^2 \,d\gs +H\int_{S^{d-1}} \vert f\vert^{2a} \,d\gs,\quad \sqrt{pq}I_2\leq   \int_{S^{d-1}} \abs {\nabla_\theta (\vert g\vert^{b-1}g)}^2 \,d\gs +H\int_{S^{d-1}} \vert g\vert^{2b} \,d\gs.
\end{equation*}
From the last two inequalities, it follows that
\begin{equation*}
\begin{split}
&\left(A \int_{S^{d-1}} \abs {\nabla_\theta (\vert f\vert^{a-1}f)}^2 \,d\gs +\sqrt{pq}\lambda \int_{S^{d-1}} \vert f\vert^{2a} \,d\gs\right)\left(B \int_{S^{d-1}} \abs {\nabla_\theta (\vert g\vert^{b-1}g)}^2 \,d\gs +\sqrt{pq}\mu \int_{S^{d-1}} \vert g\vert^{2b} \,d\gs\right)\\
&\leq \left(\int_{S^{d-1}} \abs {\nabla_\theta (\vert f\vert^{a-1}f)}^2 \,d\gs +H\int_{S^{d-1}} \vert f\vert^{2a} \,d\gs\right)\left(\int_{S^{d-1}} \abs {\nabla_\theta (\vert g\vert^{b-1}g)}^2 \,d\gs +H\int_{S^{d-1}} \vert g\vert^{2b} \,d\gs\right),
\end{split}
\end{equation*}
or equivalently,
\begin{equation}\label{4.5}
\begin{split}
( AB-1) \int_{S^{d-1}} \abs {\nabla_\theta (\vert f\vert^{a-1}f)}^2 \,d\gs\int_{S^{d-1}} \abs {\nabla_\theta (\vert g\vert^{b-1}g)}^2 \,d\gs\\ +(\sqrt{pq}A\mu-H)\int_{S^{d-1}} \abs {\nabla_\theta (\vert f\vert^{a-1}f)}^2 \,d\gs \int_{S^{d-1}} \vert g\vert^{2b} \,d\gs&\\
+(\sqrt{pq}B\lambda-H)\int_{S^{d-1}} \abs {\nabla_\theta (\vert g\vert^{b-1}g)}^2 \,d\gs \int_{S^{d-1}} \vert f\vert^{2a} \,d\gs &\\
+(pq\lambda \mu-H^2)\int_{S^{d-1}} \vert f\vert^{2a} \,d\gs\int_{S^{d-1}} \vert g\vert^{2b} \,d\gs\leq 0.
\end{split}
\end{equation}
 Choose $a=\frac{q+1}2$ and so $b=\frac{p+1}{2}.$ Since $p\geq q >1,$ we have
  $$AB-1= \frac{1}{(q+1)^2(p+1)^2}\left(16p^2q^2-(q+1)^2(p+1)^2\right)>0. $$
  Hence, applying Theorem \ref{main1}, it follows that $f=g=0$ if $d-4\le \alpha+\beta.$ So, we can assume that $d-4> \alpha+\beta.$
 In addition, since $p\geq q>1,$ we have $\frac{\beta}{\alpha}A^2=\frac{16pq^3}{(p+1)(q+1)^3}=\frac{2p}{p+1}\left(\frac{2q}{q+1}\right)^3\geq 1.$ Recall that $\alpha\geq \beta,$ so that $\frac{\lambda}{\mu}=\frac{\alpha}{\beta}\frac{d-2-\alpha}{d-2-\beta}\leq \frac{\alpha}{\beta}\leq A^2. $ Since $H^2<pq \lambda \mu,$ we find $H<\sqrt{pq}\sqrt{\lambda}\sqrt{\mu}\leq\sqrt{pq}A \mu.$ Using $d-4> \alpha+\beta,$ we can easily check that $A\mu \leq B\lambda$ and so $H< \sqrt{pq}A\mu\leq\sqrt{pq}B\lambda.$ We have just proved that all the constants appearing in \eqref{4.5} are positive, hence $f=g=0$. So, we are done.\end{proof}

%\section*{Acknowledgments}

\section{The case $d-4>\alpha+\beta$: an iteration method in the spirit of De Giorgi}

\begin{lem}\label{l1}
Let $(u,v)$ be a  positive solution of \eqref{1.1} satisfying \eqref{stability interp improved} in $B_2$. Assume that $d-4>\alpha+\beta$ and \eqref{1.7} holds.
Then, there exist constants $C>0$ and $\sigma\in (0,1)$ depending on $d,p,q$ only such that letting $U=u/u_s$, $V=v/v_s$, $a=(q+1)/2$ and $b=(p+1)/2$, there holds
\begin{equation}\label{dgestimate}
\Vert \vert U\vert^{a-1}U\Vert_{C^\sigma(B_{1})}+ \Vert \vert V\vert^{b-1}V\Vert_{C^\sigma(B_{1})}\le C. %\int_{B_2} |x|^{-d}\Big(|U|^{q+1}+|V|^{p+1}\Big).
\end{equation}
\end{lem}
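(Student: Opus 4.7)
The strategy is to apply Campanato's characterization of H\"older spaces. Since the normalized profiles $U=u/u_s$ and $V=v/v_s$ are scale invariant under the Lane--Emden rescaling \eqref{rescalings}, the natural quantity to control is the scale-invariant Dirichlet energy
$$\Phi(x_0,r):=r^{2-d}\int_{B_r(x_0)}\bigl(|\nabla U^a|^2+|\nabla V^b|^2\bigr)\,dx.$$
By Poincar\'e's inequality, $\Phi(x_0,r)$ dominates the mean-square oscillation of $U^a$ and $V^b$ on $B_r(x_0)$, so by Campanato's theorem it suffices to prove a universal decay $\Phi(x_0,r)\le Cr^{2\sigma}$ for $x_0\in B_1$ and $r$ small.

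The first step is a Caccioppoli-type a priori bound $\Phi(x_0,r)\le C_0$ derived from \eqref{stability interp improved}. A direct computation from the definitions of $\alpha,\beta$ yields the identity $u_s^{(q-1)/2}v_s^{(p-1)/2}=C|x|^{-2}$, so the weight on the left-hand side of \eqref{stability interp improved} carries exactly the homogeneity of the Hardy potential appearing on the right-hand side. Testing \eqref{stability interp improved} against appropriate powers of $u$ and $v$ times cutoffs, combining with the integration-by-parts identity in the spirit of \eqref{3.1} and the balancing strategy of Lemma \ref{Newl}, and finally exploiting the strict inequality $H^2<pq\lambda\mu$ from \eqref{1.7}, one extracts strict coercivity. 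The structural assumption $d-4>\alpha+\beta$ is precisely what ensures that the Young's-inequality balancing closes with a positive constant in front of the Dirichlet-energy piece.

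The heart of the proof is then a De Giorgi-style oscillation reduction: there exist $\theta,\rho\in(0,1)$ depending only on $d,p,q$ with
$$\Phi(x_0,\rho r)\le \theta\,\Phi(x_0,r)$$
for all $x_0\in B_{3/2}$ and all admissible $r$. I would argue by contradiction and compactness. If no such pair $(\theta,\rho)$ worked, one could extract a sequence of stable solutions $(u_n,v_n)$ and scales $r_n$ violating the decay. Rescaling via the Lane--Emden invariance (which preserves both \eqref{1.1} and \eqref{stability interp improved}) brings $r_n$ to unit size; the uniform bound from Step 1 combined with Theorem \ref{prop-gamma0} extracts a subsequential limit $(u_\infty,v_\infty)$ which solves \eqref{1.1} and inherits \eqref{stability interp improved}. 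A further blow-down, again controlled by Theorem \ref{prop-gamma0}, produces an asymptotically homogeneous stable limit; Theorem \ref{thm-homo}, under assumption \eqref{1.7}, forces it to be trivial, hence $(u_\infty,v_\infty)$ coincides with the singular solution $(u_s,v_s)$ and $U_\infty\equiv V_\infty\equiv 1$. This contradicts the assumed non-decay of $\Phi$. Iterating the oscillation reduction yields $\Phi(x_0,r)\le Cr^{2\sigma}$ with $\sigma=-\log\theta/(2\log(1/\rho))$, and \eqref{dgestimate} follows from Poincar\'e and Campanato's theorem.

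The main obstacle I anticipate is the compactness step, particularly when the centres $x_0^n$ accumulate at the origin, where $u_s$ and $v_s$ are singular: one must use a pointed rescaling that normalizes correctly with respect to the singular profile, so that the limit is a solution on $\R^d\setminus\{0\}$ inheriting \eqref{stability interp improved} and not a degenerate object. A second delicate point is closing the chain \emph{non-decay of $\Phi$ for the rescaled sequence} $\Rightarrow$ \emph{nontrivial blow-down limit} $\Rightarrow$ \emph{nontrivial stable homogeneous solution} in the absence of a monotonicity formula for the Lane--Emden system; this relies essentially on the fact that the a priori bound of Step 1 is itself scale invariant and universal, and on the rigidity for stable homogeneous solutions furnished by Theorem \ref{thm-homo}.
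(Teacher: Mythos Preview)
Your overall plan---control the scale-invariant Dirichlet energy $\Phi(x_0,r)$ and conclude via Campanato---matches the paper's endgame, but the mechanism you propose for the oscillation reduction is fundamentally different from the paper's, and it contains a genuine gap.

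The paper's proof is \emph{entirely direct}: there is no contradiction, no compactness, no blow-down. The key idea is to test the equation with $|u|^{2a-2}u\varphi^2$ where $\varphi=\varphi_0\psi$, $\varphi_0(x)=|x|^{\alpha a-(d-2)/2}$ is a fixed Hardy-type weight and $\psi$ is a cutoff. With this choice, the integration-by-parts identity reorganizes exactly into the Hardy quadratic forms
\[
Q_1(f)=\int\Bigl(|\nabla f|^2-\Bigl(\tfrac{d-2}{2}\Bigr)^2\frac{f^2}{|x|^2}\Bigr),\qquad Q_2(f)=\int\frac{f^2}{|x|^2},
\]
and after multiplying the identities for $u$ and $v$, applying \eqref{stability interp improved} via Cauchy--Schwarz, and using $H^2<pq\lambda\mu$ together with $d-4>\alpha+\beta$ to make all four coefficients positive, one obtains $Q_1+Q_1'+Q_2+Q_2'\le C(|T|+|T'|)$ where $T,T'$ are remainder terms supported on $\{\nabla\psi\neq0\}$. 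Choosing $\psi$ a standard cutoff on $B_1\subset B_2$ yields a \emph{hole-filling} inequality
\[
\int_{B_1}|x|^{-d}\bigl(|U|^{q+1}+|V|^{p+1}\bigr)\le c\int_{B_2}|x|^{-d}\bigl(|U|^{q+1}+|V|^{p+1}\bigr),\qquad c\in(0,1),
\]
which iterates to $Cr^{2\sigma}$ decay without any limiting procedure.

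Your compactness route, by contrast, hinges on the sentence ``A further blow-down \ldots\ produces an asymptotically homogeneous stable limit.'' This is precisely the statement the paper says is \emph{unknown} for the Lane--Emden system when $p\neq q$ and $q\neq1$, because no monotonicity formula is available (see the discussion preceding Theorem~\ref{tbth}). Theorem~\ref{prop-gamma0} gives compactness of the rescalings but says nothing about homogeneity of the limit; you cannot invoke Theorem~\ref{thm-homo} without first knowing the limit is homogeneous. You acknowledge this as a ``delicate point'' but offer no mechanism to close it, and indeed the paper's whole architecture (introducing \eqref{stability interp improved} as a hypothesis, proving Theorem~\ref{prop-gamma} separately) is designed to circumvent exactly this obstacle.

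There is also a logical slip: even if the further blow-down were homogeneous and hence trivial by Theorem~\ref{thm-homo}, ``trivial'' means $f=g=0$, i.e.\ the blow-down of $(u_\infty,v_\infty)$ vanishes. That does \emph{not} imply $(u_\infty,v_\infty)=(u_s,v_s)$ and $U_\infty\equiv V_\infty\equiv1$; it implies $(u_\infty,v_\infty)$ has fast decay at infinity, which is a completely different statement and does not by itself contradict the assumed non-decay of $\Phi$ at a fixed scale.
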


\begin{proof}  We divide our proof into three steps.

\noindent {\bf Step 1: Basic identities.}
Let $a\ge1$, $\varphi\in C^1_c(B_2)$, multiply the first equation of the system by $\vert u\vert^{2a-2}u\varphi^2$ and integrate. The left-hand side is equal to
\begin{eqnarray*}
% \nonumber to remove numbering (before each equation)
 \int \nabla u\cdot\nabla (\vert u\vert^{2a-2}u\varphi^2)  &=& (2a-1)\int \vert u\vert ^{2a-2}\vert \nabla u\vert^2 \varphi^2+ \int \vert u\vert^{2a-2}u \nabla u\nabla \varphi^2 \\
   &=& \frac{2a-1}{a^2}\int \vert \nabla (\vert u\vert^{a-1}u)\vert^2\varphi^2+\frac{1}{2a}\int \nabla \vert u \vert^{2a} \nabla \varphi^2 \\
   &=& \frac{2a-1}{a^2}\int \vert \nabla (\vert u\vert^{a-1}u\varphi)\vert^2-\frac{2a-1}{a^2}\int \nabla \varphi \nabla (\vert u \vert^{2a}\varphi)+\frac{1}{2a}\int \nabla \vert u \vert^{2a} \nabla \varphi^2\\
   &=&\frac{2a-1}{a^2}\int \vert\nabla (\vert u\vert^{a-1}u\varphi)\vert^2+\int \vert u\vert^{2a}\left(\frac{2a-1}{a^2}\varphi\Delta\varphi-\frac1{2a}\Delta\varphi^2\right).
\end{eqnarray*}
Next, choose $\varphi=\varphi_0\psi$, where $\varphi_0(x)=\vert x\vert^{\alpha a-\frac{d-2}{2}}$ and $\psi\in C^1_c(\R^d\setminus\{0\})$. Then,
$$
\Delta\varphi=(\Delta\varphi_0)\psi +2\nabla\varphi_0\cdot\nabla\psi+ \varphi_0\Delta\psi,
$$
and
$$
\Delta\varphi^2=(\Delta\varphi_0^2)\psi^2 +2\nabla\varphi_0^2\cdot\nabla\psi^2+ \varphi_0^2\Delta\psi^2.
$$
Let $R$ (for rest) be defined by
$$
\frac{2a-1}{a^2}\varphi\Delta\varphi-\frac1{2a}\Delta\varphi^2 = \left(\frac{2a-1}{a^2}\varphi_0\Delta\varphi_0-\frac1{2a}\Delta\varphi_0^2\right)\psi^2-R,
$$
where
\begin{equation}\label{RR}
R=\frac{1}{2a^2}\nabla \varphi_0^2\cdot \nabla \psi^2+\Big(\frac{1}{a}|\nabla \psi|^2-\frac{a-1}{a^2}\psi\Delta \psi  \Big)\varphi_0^2.
\end{equation}
By direct computation, we have
\begin{eqnarray*}
% \nonumber to remove numbering (before each equation)
  \frac{2a-1}{a^2}\varphi_0\Delta\varphi_0-\frac1{2a}\Delta\varphi_0^2 &=& \left[\frac{2a-1}{a^2}\left((\alpha a)^2-\left(\frac{d-2}{2}\right)^2\right)-\frac{1}{2a}\left(2\alpha a(2\alpha a-(d-2))\right)\right]\frac{\varphi_0^2}{\vert x\vert^2}  \\
   &=&\left[(2a-1)\alpha ^2-\frac{2a-1}{a^2}\left(\frac{d-2}{2}\right)^2-2\alpha ^2 a+\alpha (d-2)\right] \frac{\varphi_0^2}{\vert x\vert^2}\\
   &=& \left[\lambda-\frac{2a-1}{a^2}\left(\frac{d-2}{2}\right)^2\right]\frac{\varphi_0^2}{\vert x\vert^2}.
\end{eqnarray*}
So, we just proved that for $\varphi=\varphi_0\psi$,
$$
\int \nabla u\cdot\nabla (\vert u\vert^{2a-2}u\varphi^2) = \frac{2a-1}{a^2}\int\vert\nabla(\vert u\vert^{a-1}u\varphi)\vert^2+\left(\lambda-\frac{2a-1}{a^2}\left(\frac{d-2}{2}\right)^2\right)\int\frac{(\vert u\vert^a\varphi)^2}{\vert x\vert^2}
-\int R \vert u\vert^{2a}.
$$
The left hand-side of the above identity is equal to
$$
\int  \vert v \vert^{p-1}v\vert u\vert^{2a-2}u\varphi^2.
$$
Introducing the quadratic forms
$$
    Q_1(f)=\int\left(\vert\nabla f\vert^2-\left(\frac{d-2}{2}\right)^2\frac{f^2}{\vert x\vert^2}\right), \quad Q_2(f)=\int\frac{f^2}{\vert x\vert^2}
$$
and the numbers $Q_1=Q_1(\vert u\vert^{a-1}u\varphi)$, $Q_2=Q_2(\vert u\vert^{a-1}u\varphi)$, $S=\int  \vert v \vert^{p-1}v\vert u\vert^{2a-2}u\varphi^2$, $T=\int R \vert u\vert^{2a}$, we conclude that
\begin{equation}\label{h1}
\frac{2a-1}{a^2}Q_1 + \lambda Q_2=S+T.
\end{equation}
The choice $a=\frac{q+1}{2}$ makes the expression of $\varphi_0(x)=\vert x\vert^{\frac{(p+1)(q+1)}{pq-1}-\frac{d-2}{2}}$ symmetric in the variables $(p,q)$. Hence, choosing $b=\frac{p+1}{2}$, we may assert that for the {\it same} functions $\varphi=\varphi_0\psi$ and $R$, there holds
\begin{equation}\label{h2}
\frac{2b-1}{b^2}Q_1' + \mu Q_2'=S'+T',
\end{equation}
where $Q_1'=Q(\vert v\vert^{b-1}v\varphi)$, $Q_2'=Q_2(\vert v\vert^{b-1}v\varphi)$, $S'=\int \vert u\vert^{q-1}u\vert v\vert^{2b-2}v\varphi^2$ and $T'=\int R \vert v\vert^{2b}$. For this choice of $a$ and $b$ we have $S=S'$.

\medskip

\noindent {\bf Step 2: $Q_1+Q_1'\leq |T|+|T'|$ and $Q_2+Q_2'\leq |T|+|T'|$.}

\medskip

Multiplying \eqref{h1} and \eqref{h2} we find
$$
\left(\frac{4q}{(q+1)^2}Q_1 + \lambda Q_2\right)\left(\frac{4p}{(p+1)^2}Q_1' + \mu Q_2'\right)\le (S+T)(S+T')= S^2+S(T+T')+TT'.
$$
If $\varepsilon>0$ is small enough, the above inequality yields
\begin{equation}\label{h3}
\left(\frac{4q}{(q+1)^2}Q_1 + \lambda Q_2\right)\left(\frac{4p}{(p+1)^2}Q_1' + \mu Q_2'\right)\le  (1+\varepsilon)S^2+C(\varepsilon)(T^2+T'^2).
\end{equation}
Now, recalling the choice $a=\frac{q+1}{2}, b=\frac{p+1}{2}$, the Cauchy-Schwarz inequality yields
$$
\left\vert\int \vert u\vert^{q-1}u\vert v\vert^{2b-2}v\varphi^2\right\vert\le \int \vert u\vert^{q}\vert v\vert^{p}\varphi^2 \le
\left(\int
\vert u\vert^{\frac{q-1}{2}}\vert v\vert^{\frac{3p+1}2}\varphi^2
\right)^{\frac12}
\left(\int
\vert u\vert^{\frac{3q+1}2}\vert v\vert^{\frac{p-1}{2}}\varphi^2
\right)^{\frac12}.
$$

Applying \eqref{stability interp improved} with test functions $\vert v\vert^{b-1}v\varphi$ and $\vert u\vert^{a-1}v\varphi$, we deduce that
$$
\left\vert\int \vert u\vert^{q-1}u\vert v\vert^{2b-2}v\varphi^2\right\vert\le\frac1{\sqrt{pq}}
\left(\int\vert\nabla(\vert v\vert^{b-1}v\varphi)\vert^2-\frac{\gamma^2}{4}\int\frac{(\vert v\vert^b\varphi)^2}{\vert x\vert^2}\right)^{1/2}\left(\int\vert\nabla(\vert u\vert^{a-1}u\varphi)\vert^2-\frac{\gamma^2}{4}\int\frac{(\vert u\vert^a\varphi)^2}{\vert x\vert^2}\right)^{1/2}.
$$
Similarly,
$$
\left\vert\int  \vert v \vert^{p-1}v\vert u\vert^{2a-2}u\varphi^2\right\vert\le\frac1{\sqrt{pq}}
\left(\int\vert\nabla(\vert v\vert^{b-1}v\varphi)\vert^2-\frac{\gamma^2}{4}\int\frac{(\vert v\vert^b\varphi)^2}{\vert x\vert^2}\right)^{1/2}\left(\int\vert\nabla(\vert u\vert^{a-1}u\varphi)\vert^2-\frac{\gamma^2}{4}\int\frac{(\vert u\vert^a\varphi)^2}{\vert x\vert^2}\right)^{1/2}.
$$
Multiplying both inequalities, we find
\begin{equation}\label{es}
S^2  \le \frac1{pq}(Q_1+HQ_2)(Q_1'+HQ_2').
\end{equation}
We plug this last estimate into \eqref{h3} to find
\begin{multline*}
\left(AB-(1+\varepsilon)\right)Q_1Q_1'+ \left(pq\lambda\mu-(1+\varepsilon)H^2\right)Q_2Q_2'+\left(\sqrt{pq}A\mu-(1+\varepsilon)H \right)Q_1Q_2'\\
+\left(\sqrt{pq}B\lambda-(1+\varepsilon)H \right)Q_1'Q_2 \le C(T^2+T'^2),
\end{multline*}
where, as above, we denote $A:= \sqrt{pq}\frac{2a-1}{a^2}=\sqrt{pq}\frac{4q}{(q+1)^2}$ and $B:=\sqrt{pq}\frac{2b-1}{b^2}=\sqrt{pq}\frac{4p}{(p+1)^2}.$
By the computations made at the end of the previous section, we have $AB>1,$ $\sqrt{pq}A\mu>H$ and $\sqrt{pq}B\lambda>H.$
Then, using \eqref{1.7} and choosing $\varepsilon>0$ small enough we find that all coefficients of the above inequality are positive. This yields
$$
\max\{Q_1Q_1', Q_2Q_2', Q_1Q_2', Q_1'Q_2\} \leq C(T^2+T'^2).
$$
Combing back to \eqref{es} we find $S^2\leq C(T^2+T'^2)$, so $|S|\leq C(|T|+|T'|)$.

\medskip

\noindent{\bf Step 3: Conclusion.}

\medskip

Let us start with the inequality $Q_2+Q_2'\leq C(|T|+|T'|)$ which we established in Step 2 above. Setting
$$
U=\frac{u}{u_s}\quad\mbox{ and }V=\frac{v}{v_s},
$$
we find
$$
\int |x|^{-d}\Big( |U|^{q+1}+|V|^{p+1}\Big) \psi^2\leq C \int R \Big(|U|^{q+1}+|V|^{p+1}\Big).
$$
Take first $\psi(x)=\psi_1(x)$ a standard cut-off function which is identical to $1$ in $B_1$ and vanishes outside of $B_2$. Then, the expression of $R$ in \eqref{RR} and the above estimate yield
$$
\int_{B_1} |x|^{-d}\Big( |U|^{q+1}+|V|^{p+1}\Big) \leq C\int_{B_2\setminus B_1} |x|^{-d}\Big(|U|^{q+1}+|V|^{p+1}\Big),
$$
that is,
$$
\int_{B_1} |x|^{-d}\Big( |U|^{q+1}+|V|^{p+1}\Big) \leq c\int_{B_2} |x|^{-d}\Big(|U|^{q+1}+|V|^{p+1}\Big),
$$
for some $c\in (0,1)$ which depends only on $p, q$ and $d$.
By rescaling and iterating, we deduce that there exist $C, C'>0$ and $\sigma\in (0,1)$ depend only on $p, q$ and $d$ such that
\begin{equation}\label{hy0}
\int_{B_r} |x|^{-d}\Big(  \vert U\vert^{q+1}+\vert V\vert^{p+1}\Big) \leq Cr^{2\sigma}\int_{B_2\setminus B_1} |x|^{-d}\Big(|U|^{q+1}+|V|^{p+1}\Big)\le  C'r^{2\sigma}\quad\mbox{ for all }r\in (0,1),
\end{equation}
 where we used the energy estimate \eqref{3.10} with $(a,\theta)=\left(\frac{q+1}2,1\right)$ and the comparison between components \eqref{estS} in the last inequality.

We now turn to the inequality $Q_1+Q_1'\leq C(|T|+|T'|)$ in which we take $\psi(x)=\psi_1(x/r)$. Using \eqref{hy0} we deduce
\begin{equation}\label{hy}
\int_{B_r} \Big|\nabla \big(  \vert U\vert^{a-1}U|x|^{-\frac{d-2}{2}}  \big)  \Big|^2+\Big|\nabla \big( \vert V\vert^{b-1}V|x|^{-\frac{d-2}{2}}  \big)  \Big|^2\leq \tilde Cr^{2\sigma},
\end{equation}
where, as before $\tilde C$ depends only on $p$, $q$ and $d$.
The identity
$$
\int\Big|\nabla \big(h |x|^{-\frac{d-2}{2}}  \big)  \Big|^2=\Big( \frac{d-2}{2}\Big)^2 \int\frac{h^2}{|x|^2}+\int |x|^{-d+2}\big|\nabla h\big|^2
$$
applied to \eqref{hy} leads us to
\begin{equation}\label{hy1}
r^2 \fint_{B_r}  \Big( \big|\nabla  \vert U\vert^{a-1}U   \big|^2+\big|\nabla  \vert V\vert^{b-1}V    \big|^2\Big) \leq C_1 r^{2\sigma}.
\end{equation}
By the invariance to translation, the above estimate holds for any ball $B_r(x)\subset B_1$, $r\in (0,1)$. If we set $\zeta= \vert U\vert^{a-1}U$ and $\zeta_{x,r}$ the spherical average of $\zeta$ over $B_r(x)$, from Poincar\' e-Wirtinger and \eqref{hy1} we find
$$
\fint_{B_r(x)}|\zeta-\zeta_{x,r}|\leq \left( \fint_{B_r(x)}|\zeta-\zeta_{x,r}|^2 \right)^{1/2}\leq C_1 \left(r^2 \fint_{B_r(x)}|\nabla \zeta |^2 \right)^{1/2}\leq C_2r^\sigma.
$$
The characterization of H\"older functions due to Campanato yields
\begin{equation}\label{HC}
|\zeta(x)-\zeta(y)|\leq C_3|x-y|^\sigma\quad\mbox{ for all }\,  x, y\in B_{1/2}.
\end{equation}
A similar inequality holds for $\xi= \vert V\vert^{b-1}V$ and this concludes our proof.
\end{proof}

\section{The case $d-4>\alpha+\beta$: asymptotics of solutions stable outside a compact set}

\begin{lem}\label{l2}
Let $(u,v)$ be a positive solution of \eqref{1.1} satisfying \eqref{stability interp improved}. Assume that \eqref{1.7} holds.
Then,
\begin{equation}\label{asymp}
|x|^\alpha u(x)\to 0 \quad\mbox{ and }\quad |x|^\beta v(x)\to 0\quad\mbox{ as }|x|\to \infty.
\end{equation}
\end{lem}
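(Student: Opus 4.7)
The strategy I would follow is a blow-down compactness argument. Setting $U = u/u_s$ and $V = v/v_s$, I would introduce the rescalings $(u_R, v_R)(x) = (R^\alpha u(Rx), R^\beta v(Rx))$ for $R \ge 1$. Both \eqref{1.1} and \eqref{stability interp improved} are invariant under this scaling (the test function transforming as $\tilde\varphi(y)=\varphi(y/R)$, which leaves the Hardy weight $|y|^{-2}\,dy$ intact), so Lemma \ref{l1} applies to each $(u_R,v_R)$ and yields a uniform $C^\sigma$-bound for $|U_R|^{a-1}U_R$ and $|V_R|^{b-1}V_R$ in $B_1$. Translating this back gives the global estimates $u(x)\le C|x|^{-\alpha}$ and $v(x)\le C|x|^{-\beta}$ on $\R^d\setminus\{0\}$, together with uniform H\"older regularity of $U^a, V^b$ on every compact subset of $\R^d\setminus\{0\}$. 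Coupled with Theorem \ref{prop-gamma0}, the family $\{(u_R,v_R)\}_{R\ge 1}$ is then precompact in $C_{\mathrm{loc}}(\R^d\setminus\{0\})$, so every sequence $R_n\to\infty$ admits a subsequence along which $(u_{R_n},v_{R_n})\to(u_\infty,v_\infty)$ locally uniformly on $\R^d\setminus\{0\}$; the limit is a nonnegative classical solution of \eqref{1.1} on $\R^d\setminus\{0\}$, dominated by a multiple of $(u_s,v_s)$.

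Next I would show that every such blow-down vanishes. The key is that $(u_\infty,v_\infty)$ itself satisfies \eqref{stability interp improved}: for $\varphi\in C^1_c(\R^d\setminus\{0\})$ one passes \eqref{stability interp improved} for $(u_{R_n},v_{R_n})$ to the limit using uniform convergence on $\operatorname{supp}\varphi$ and the Souplet bound \eqref{estS}, while the zero $H^1$-capacity of $\{0\}$ (valid for $d\ge 2$) together with the pointwise bounds $u_\infty\le C|x|^{-\alpha}$, $v_\infty\le C|x|^{-\beta}$ and Hardy's inequality extends the inequality to all $\varphi\in H^1_0(\R^d)$. Re-running the proof of Lemma \ref{l1} on $(u_\infty,v_\infty)$ then delivers the integral bound
\[
\int_{B_r}|y|^{-d}\bigl(|U_\infty|^{q+1}+|V_\infty|^{p+1}\bigr)\,dy \le C\,r^{2\sigma},\qquad r\in(0,1),
\]
with $C$ depending only on $d,p,q$ and on the bounded annular energy. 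Applying the same bound to the rescalings $(u_\infty^S, v_\infty^S)$, and exploiting the scale-invariance of the weight $|y|^{-d}\,dy$, upgrades it to
\[
\int_{B_t}|y|^{-d}\bigl(|U_\infty|^{q+1}+|V_\infty|^{p+1}\bigr)\,dy \le C\,(t/S)^{2\sigma},\qquad 0<t<S;
\]
letting $S\to\infty$ at fixed $t$ forces $U_\infty\equiv V_\infty\equiv 0$.

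Finally I would conclude by contradiction: if $|y|^\alpha u(y)$ did not tend to $0$, there would exist $c_0>0$ and a sequence $y_n$ with $|y_n|\to\infty$ and $U(y_n)\ge c_0$. Setting $R_n=|y_n|$ and $\xi_n=y_n/R_n\in S^{d-1}$, extract a subsequence along which $\xi_n\to\xi_\infty$ and $(u_{R_n},v_{R_n})\to(u_\infty,v_\infty)\equiv 0$ locally uniformly on $\R^d\setminus\{0\}$, which contradicts $U(y_n)=U_{R_n}(\xi_n)\to U_\infty(\xi_\infty)=0$. The same argument applied to $v$ yields the companion decay in \eqref{asymp}. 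The main obstacle, in my view, will be transferring \eqref{stability interp improved} to the blow-down through the singular point at the origin: the capacity-zero density argument depends crucially on the Hardy-type correction $-\frac{\gamma^2}{4}\int\varphi^2/|x|^2$ keeping the right-hand side finite on $H^1_0(\R^d)$, so that the inequality can actually be extended from $C^1_c(\R^d\setminus\{0\})$ to test functions touching the origin.
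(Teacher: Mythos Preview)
Your approach is correct but takes a genuinely different route from the paper. The paper never extracts a blow-down limit: for $|z|$ large it rescales at the point $z$ (not at the origin) with scale $\rho=|z|/4$, applies Lemma~\ref{l1} to the rescaled pair, and reads off a quantitative H\"older estimate of the form $|U^a(X)-U^a(Y)|\le C\rho^{-a\alpha-\sigma}|X-Y|^\sigma$ for $X,Y\in B_{\rho/2}(z)$. It then compares $U^a(X)$ with its average over a ball of radius $1/\rho$: the difference vanishes by the H\"older bound, while the average vanishes by the translated version of the integral decay \eqref{hy0}. This is a short, direct argument with no limiting object to analyze.

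Your compactness route---uniform $C^\sigma$ bounds, extraction of a blow-down $(u_\infty,v_\infty)$, transfer of \eqref{stability interp improved} to the limit, and then killing the limit by iterating the scale-invariant decay $\int_{B_t}|y|^{-d}(|U_\infty|^{q+1}+|V_\infty|^{p+1})\le C(t/S)^{2\sigma}$---is valid and arguably more conceptual; it identifies the blow-down explicitly as zero, which is independently useful. The cost is exactly the obstacle you flag, plus one more: re-running Lemma~\ref{l1} on $(u_\infty,v_\infty)$ requires justifying the passage from test functions $\psi\in C^1_c(B_2\setminus\{0\})$ to $\psi_1\equiv 1$ on $B_1$, since the limit is only $C^2$ away from the origin. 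Here you should use that the uniform $C^\sigma$ bound on $|U_{R_n}|^{a-1}U_{R_n}$ in $B_1$ together with $U_{R_n}(0)=0$ forces $|U_\infty(x)|^a\le C|x|^\sigma$ near $0$, which makes all the relevant integrals converge; and the annular bound in \eqref{hy0} comes from the global estimate $U_\infty\le C$ rather than from \eqref{3.10}. The paper's argument sidesteps all of this at the price of a slightly opaque averaging trick.
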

\begin{proof}
Without loosing any generality, we may assume that $(u, v)$ satisfies \eqref{stability interp improved} with $K=B_1$.
Let $z\in \R^d\setminus B_2$ and $\rho= |z|/4$. Then the pair $(\tilde u, \tilde v)$ defined as
\begin{equation}\label{hy00}
\tilde u(x)=\rho^\alpha u(z+\rho x), \quad \tilde v(x)=\rho^\beta v(z+\rho x)
\end{equation}
is a solution of \eqref{1.1} satisfying \eqref{stability interp improved} in $B_2$. Letting $\tilde U=\tilde u/u_s$ $\tilde V=\tilde v/v_s$, $a=(q+1)/2$ and $b=(p+1)/2$, we have by Lemma \ref{l1} that $\tilde U^a$ and $\tilde V^b$ are H\"older continuous in $B_{1/2}$.  In particular \eqref{HC} yields
$$
|\tilde U^a(x)-\tilde U^a (y)|\leq C|x-y|^\sigma\quad\mbox{ for all }\,  x, y\in B_{1/2}.
$$
Letting $X=z+\rho x$ and $Y=z+\rho y$, the above estimate implies
$$
\big|U^a(X)-U^a(Y)\big|\leq C\rho^{-a\alpha-\sigma}|X-Y|^\sigma \quad\mbox{ for all }\,  X, Y\in B_{\rho/2}(z),
$$
where, as in Lemma \ref{l1}, we denote $U=u/u_s$ and $V=v/v_s$. Averaging the above inequality in the $Y$ variable over a ball of small radius $1/\rho$ leads to
$$
\Big|U^a(X)-\fint_{B_{1/\rho}(X)}U^a(Y) dY\Big|\leq C\rho^{-a\alpha-\sigma}\fint_{B_{1/\rho}(X)}|X-Y|^\sigma dY=C\rho^{-a\alpha-2\sigma} \quad\mbox{ for all }\,  X\in B_{\rho/4}(z).
$$
We next let $\rho=|z|/4\to \infty$. Since $X\in B_{\rho/4}(z)$, this also implies  $|X|\geq |z|-\rho/4\to \infty$. Hence, for $\rho= |z|/2$ large enough the above estimate yields
$$
U^a(X)-\fint_{B_{1/\rho}(X)}U^a\to 0\quad\mbox{ as }\rho\to \infty.
$$
By \eqref{hy0} (applied to a translation of $U$), one has
$$
\fint_{B_{1/\rho}(X)}U^a\to 0\quad\mbox{ as }\rho\to \infty.
$$
Thus, $U^a(X)\to 0$ as $X\to \infty$. A similar conclusion holds for $V^b$ and we conclude.
\end{proof}
We can now apply Section \ref{secfast} to conclude that Theorems \ref{tbth} and \ref{tbth2} hold.

%\bigskip\noindent

\section*{Acknowledgement} The authors are greatful to D. Ye for his careful reading of a preliminary version of this paper and for pointing out Theorem 1.1 in \cite{ch}. H. Hajlaoui received funding from Claude Bernard University Lyon 1 (UCBL). Part of this work was done while he was visiting Lyon. He thanks the Department of Mathematics and the Faculty of Sciences for the kind hospitality and for the financial support.

\end{document}